
\documentclass[reqno,11pt]{amsart}
\usepackage{latexsym}
\usepackage{amsfonts}
\usepackage{amssymb}
\usepackage{mathrsfs}
\usepackage{bbm}
\usepackage{xcolor}
\usepackage{ulem}
\usepackage[mathscr]{euscript}

\topmargin 0pt
\oddsidemargin 12pt
\evensidemargin 12pt
\textwidth 150mm
\textheight 229mm
\usepackage[all,arc]{xy}
\usepackage{enumerate}

\usepackage [english]{babel}
\usepackage [autostyle, english = american]{csquotes}
\MakeOuterQuote{"}
\usepackage[english]{babel}
\usepackage{amsfonts}
\usepackage{mathrsfs}
\usepackage{bbm}
\usepackage{latexsym}
\usepackage{math dots}
\usepackage{amssymb}
\usepackage{mathtools}
\usepackage{relsize}
\usepackage{todonotes}
\usepackage{enumitem}

\usepackage{xpatch}
\usepackage{hyperref} 
\makeatletter
\renewcommand\th@plain{\slshape}
\xpatchcmd{\proof}{\itshape}{\slshape}{}{}
\makeatother
\makeatletter
\renewcommand\th@plain{\slshape}
\makeatother

\newcommand{\norm}[1]{\left\lVert#1\right\rVert}
\newcommand{\absval}[1]{\left\lvert#1\right\rvert}

\newcommand{\ds}{\displaystyle}
\newcommand{\R}{\mathbb{R}}
\newcommand{\Z}{\mathbb{Z}}

\newcommand{\card}{\mathrm{card}}
\newcommand{\ttt}{\mathbf{t}}

\newcommand{\ignore}[1]{}

\newcommand\hd{Hausdorff dimension}

\newcommand{\vv}{\mathbf{v}}
\newcommand{\ww}{\mathbf{w}}
\newcommand{\xx}{\mathbf{x}}
\newcommand{\yy}{\mathbf{y}}
\newcommand{\zz}{\mathbf{z}}

\newcommand{\aaa}{\mathbf{a}}

\newcommand{\les}{\preccurlyeq}

\newcommand\da{Diophantine approximation}
\newcommand\ssm{\smallsetminus}

\newcommand{\SL}{\mathrm{SL}_n(\R)}
\newcommand{\ASL}{\mathrm{ASL}_n(\R)}
\newcommand{\GL}{\mathrm{GL}_n(\R)}
\newcommand{\AGL}{\mathrm{AGL}_n(\R)}

\newcommand{\SLtwo}{\mathrm{SL}_2(\R)}
\newcommand{\Spn}{\mathrm{Sp}_{n}(\R)}

\newcommand {\comm}[1]   {\textcolor{red}{#1}}
\newcommand\eq[2]{
\begin{equation}
\label{eq:#1}
{#2}
\end{equation}
}
\newcommand{\equ}[1]{\eqref{eq:#1}}

\theoremstyle{plain}
\newtheorem{thm}{Theorem}[section]
\newtheorem{cor}[thm]{Corollary}
\newtheorem{rmk}[thm]{Remark}
\newtheorem{prop}[thm]{Proposition}
\newtheorem{lem}[thm]{Lemma}

\theoremstyle{definition}
\newtheorem{defn}[thm]{Definition}

\theoremstyle{remark}

\makeatletter
\@namedef{subjclassname@2020}{%
  \textup{2020} Mathematics Subject Classification}
\makeatother

\numberwithin{equation}{section}

\bibliographystyle{plain}


\title[Khintchine-type theorems for subhomogeneous functions]{Khintchine-type theorems for values of \\ subhomogeneous functions at integer points}

\author{Dmitry Kleinbock}
\address{
\begin{itemize}
\item[] Department of Mathematics
\item[] Brandeis University 
\item[] Waltham, MA $02454$\textendash$9110$
\item[] USA
\item[] \href{mailto:kleinboc@brandeis.edu}{{\tt kleinboc@brandeis.edu}}
\end{itemize}}

\author{Mishel Skenderi}    
\address{
\begin{itemize}
\item[] Department of Mathematics
\item[] Brandeis University 
\item[] Waltham, MA $02454$\textendash$9110$
\item[] USA
\item[] \href{mailto:mskenderi@brandeis.edu}{{\tt mskenderi@brandeis.edu}} 
\end{itemize}}

\date{November 2020}
\thanks{D. K. has been supported by  NSF    grants DMS-1600814 and DMS-1900560}

\begin{document}

\begin{abstract} 

This work has been motivated by recent papers that quantify the density of values of generic quadratic forms and other  polynomials at integer points, in particular ones that use Rogers' second moment estimates.
In this paper, we establish such results in a 
very general framework. Given any subhomogeneous function (a notion to be defined) $f: \R^n \to \R$,  we derive a necessary and sufficient condition on the approximating function $\psi$ for guaranteeing that 
a generic  element $f\circ g$ in the $G$-orbit of  $f$ 
 is $\psi$-approximable; that is, 
$|f\circ g(\vv)| \le  \psi(\|\vv\|)$ for 
infinitely many $\vv \in \Z^n.$ We also deduce a sufficient condition in the case of uniform approximation. Here  $G$ can be any closed subgroup of  $\ASL$ satisfying certain axioms that allow for the use of Rogers-type estimates. 
 \end{abstract}  

\keywords{Oppenheim Conjecture, metric \da, geometry of numbers, counting lattice points, $\psi$-approximability}
\subjclass[2020]{11D75; 11J54, 11J83, 11H06}
\maketitle

\tableofcontents

\section{Introduction}
Let $f$ be an indefinite and nondegenerate quadratic form in $n\ge 3$ real variables that is not a real multiple of a quadratic form with rational coefficients. 
The Oppenheim\textendash Davenport Conjecture, proved in a breakthrough paper by Margulis \cite{Mar89}, states that 
$0$ is an accumulation point of $f(\Z^n)$:  in other words, for any 
$\varepsilon > 0$,
\eq{density}{\text{there exist infinitely many }\vv\in\Z^n\text{ with } |
f(\vv)| \leq \varepsilon.}
Margulis' approach, via the dynamics of unipotent flows on homogeneous spaces, was not  effective: given $
\varepsilon > 0,$ it did not give any bound on the length of the shortest integer vector $\vv$ for which \equ{density} holds. Effective versions were later established for any $n\ge 5$ \cite{BentkusGoetze, GoetzeMargulis} {using methods from analytic number theory, but these methods are not applicable to the most difficult case $n=3$.} 
One of the difficulties in establishing effective variants of Margulis' Theorem is proving the aforesaid bounds for \textit{any} quadratic form $f$ as above. This difficulty is attenuated when one seeks to prove such bounds only for \textit{generic} $f$ as above (with respect to the natural measure class).
Recently, such effective generic results have been proved both in the original setting of quadratic forms and in related settings of other homogeneous polynomials; for example, see {\cite{EMM, MM, LM, Bourgain, GhoshKelmer, Pol, GhoshGorodnikNevo, KY2, GKY_a, GKY_b,  Anishnew}}.  

\smallskip
In order to describe some of the aforementioned results in greater detail and to lay the foundation for our own work in the present paper, let us introduce some definitions. Given a norm $\nu$ on $\R^n$, a function $f: \R^n \to \R$, and a function $\psi:\R_{\geq 0} \to \R_{>0}$ (to which we shall refer as an \textsl{approximating function}), let us say that $f$ is $\left(\psi, \nu\right)$-\textsl{approximable} if 
$\varepsilon$ in the right-hand side of \equ{density} can be replaced by $ \psi\big(\nu(\xx)\big)$.
Equivalently, {$f$ is $\left(\psi, \nu\right)$-approximable if 
$\card\left(\Z^n\cap A_{f, \psi, \nu}\right) = \infty$},
 where
\eq{defapsi}{A_{f, \psi, \nu} := \left\lbrace \xx \in \R^n : |f(\xx)| \le  \psi\big(\nu(\xx)\big)\right\rbrace.}
The above definition is a way to quantify the density of $\ds f(\Z^n)$  at $0$ in terms of the approximating function $\psi$. We note that this definition 
is dependent also on the chosen norm; under some mild assumptions, however, we shall see that this is not significant for our purposes. Every specific example that we consider in this paper will satisfy these mild assumptions.
It is also clear that the definition of $(\psi, \nu)$-approximability also makes sense when $\psi$ is defined only for all sufficiently large nonnegative real numbers; however, it is convenient to assume that the domain of $\psi$ is all of $\R_{\ge 0}$ by arbitrarily extending the function, if necessary. We shall sometimes tacitly do so.

\smallskip
Consider first the special case
\begin{equation}\label{psis}
{\psi({z}) =\varphi_s({z}) := z^{-s}},
\end{equation} where {$ s \ge 0$} is arbitrary. Let $\nu$ be any norm on $\R^n.$ It was recently shown by Athreya and Margulis \cite[Theorem 1.1]{Pol} that for every $p, q \in \Z_{\geq 1}$ with $p+q = n \geq 3$,  almost every (with respect to the natural measure class) nondegenerate real quadratic form $Q$ of signature $(p, q)$ 
is $(\varphi_s, \nu)$-approximable for every $s < n-2$.
Previously this was established by {Ghosh, Gorodnik, and Nevo 
for $n = 3$ \cite{GhoshGorodnikNevo}; see also the work of  Bourgain \cite{Bourgain}, which deals with generic ternary diagonal forms. Similar results 
were 
obtained in \cite{GhoshKelmer, Pol, KY2}}.
{For instance \cite[Theorem 1]{KY2} generalizes \cite[Theorem 1.1]{Pol} as follows:} 
let  \eq{dthpowers}{\begin{aligned} f(\xx) := \sum_{j=1}^p x_j^d - \sum_{k=p+1}^n x_k^d,\\  \text{ where }p, q \in \Z_{\geq 1}, \ p+q = n  \geq 3,\text{   and } 0 < d < n &\text{ is an even integer};\end{aligned}}
then for any real $s$ with $0 < s < n-d$, almost every polynomial in the $\text{SL}_n(\R)$-orbit of $f$ is  $(\varphi_s, \nu)$-approximable.

\smallskip
We note that in all the aforementioned papers, a property stronger than $\psi$-approximability has been established. 
{Let us denote $\Z^n_{\ne0}:=  \Z^n \ssm\{{\bf 0}\}$, and} say that $f$ is \textsl{uniformly} $(\psi, \nu)$-\textsl{approximable} if for every sufficiently large $T \in \R_{>0}$, there exists {$\ds \vv \in{ \Z^n_{\ne0}}$ with \[
\nu(\vv) \le T \text{ and }|f(\vv)| \le \psi(T).\] 
{In other words, {if} for any $\varepsilon, T \in \R_{>0},$ we set
{$$ B_{f, \varepsilon, \nu, T} := \left\lbrace \xx \in \R^n : |f(\xx)| \le {\varepsilon}   \ \text{and} \ 
\nu(\xx) \le T \right\rbrace = A_{f, \varepsilon, \nu} \cap \{\xx\in\R^n: 
\nu(\xx) \le T\}$$
(here, the $\varepsilon$ in $A_{f, \varepsilon, \nu}$ stands for the constant function $\psi\equiv \varepsilon$)}, then $f$ is {uniformly} $(\psi, \nu)$-{approximable} if and only if for every sufficiently large $T \in \R_{>0},$ 
the set $B_{f, \psi(T), \nu,T}$ contains a nonzero integer vector.}}
See, for instance, \cite[\S1.1]{W}  for a discussion of asymptotic versus uniform approximation in metric number theory, and \cite{KWa, KWainh, KR} for some recent results in uniform metric \da. ("Asymptotic approximation" is the sort of approximation that we have simply called "approximation" so far in this paper.) 
It is easy to verify that if 
the approximating function $\psi$  is nonincreasing and $f$ does not represent $0$ nontrivially, then the uniform $(\psi, \nu)$-approximability of $f$ implies its $(\psi, \nu)$-approximability.
All the aforementioned papers 
actually provide conditions sufficient for the uniform $(\varphi_s, \nu)$-approximability of generic elements of the $\SL$-orbit of a given polynomial. 
For instance, \cite[Theorem 1]{KY2} 
 states that for $f$ as in  \equ{dthpowers} and for any $s < n-d$, almost every polynomial in the $\text{SL}_n(\R)$-orbit of $f$ is  uniformly $(\varphi_s, \nu)$-approximable.

\smallskip

In this paper, we 
establish a 
generalization of the aforementioned results under the mild conditions on $f$ and $\psi$ 
to which we previously alluded. 
Furthermore, our methods allow us to generalize to the case of vector-valued functions with no additional effort. We now introduce these conditions, which will require \ignore{introducing} 
some more notation and terminology. Now and hereafter, we  shall denote by $n$ an arbitrary element of $\Z_{\geq 2}$ and by $\ell$ an arbitrary element of {$\Z_{\geq 1}$}. 

\begin{defn}\label{order}
We define a non-strict partial order $\les$ on $\R^{\ell}$ as follows. For any $\ds \xx = (x_1, \dots , x_{\ell}) \in \R^{\ell}$ and any $\ds \yy = (y_1, \dots , y_{\ell}) \in \R^{\ell},$ write $\xx \les \yy$ if and only if for each $j \in \{1, \dots , {\ell}\},$ one has $x_j \leq y_j.$
\end{defn}

\begin{defn}\label{basicapdefns}
Let \[f = (f_1, \dots , f_{\ell}) : \R^n \to \R^{\ell} \ \ \ \ \ \text{and} \ \ \ \ \  \psi = (\psi_1, \dots , \psi_{\ell}) : \R_{\geq 0} \to \left(\R_{>0}\right)^{{\ell}}\] be given, and let $\nu$ be an arbitrary norm on $\R^n$.
\smallskip
\begin{itemize}
    \item  We abuse notation and write $|f|$ to denote the function $\ds (|f_1|, \dots , |f_\ell| ) :\R^n \to \R^{\ell}.$
    \item We define $\ds A_{f, \psi, \nu} := \left\{ \xx \in \R^n : |f(\xx)| \les \psi\big(\nu(\xx)\big)\right\}.$
    \item For any $T \in \R_{>0}$ and any ${\pmb\varepsilon}\in \left(\R_{>0}\right)^\ell,$ we define $$B_{f, {\pmb\varepsilon}, \nu, T} := \left\{  \xx \in \R^n : |f(\xx)| \les {\pmb\varepsilon} \ \text{and} \  
    \nu(\xx) \leq T \right\}.$$ 
    \item We say that $f$ is $(\psi, \nu)$-\textsl{approximable} if $A_{f, \psi, \nu} \cap \Z^n$ has infinite cardinality.
    \item We say that $f$ is \textsl{uniformly}  $(\psi, \nu)$-\textsl{approximable} if 
    {$B_{f, \psi(T), \nu, T} \cap {\Z^n_{\ne0}}
    \neq \varnothing$ for each sufficiently large $T \in \R_{> 0}$}. 
    \item We say that  $f$ is \textsl{subhomogeneous} if 
    $f$ is Borel measurable and there exists a constant $d = d_f \in \R_{>0}$ such that for each $t \in (0, 1)$ and each $\xx \in \R^n$ one has $\ds |f(t\xx)| \les t^d |f(\xx)|.$
    \item We say that $\psi$ is \textsl{regular} if 
    $\psi$ is Borel measurable and there exist real numbers $a = a_\psi \in \R_{>1}$ and $b = b_\psi \in \R_{>0}$ such that for each ${z} \in \R_{>0}$ one has $b_\psi\psi({z}) \les \psi(a_\psi {z}).$
    \item We say that $\psi$ is \textsl{nonincreasing} if each component function of $\psi$ is nonincreasing in the usual sense.
\end{itemize}
\end{defn}

\ignore{\begin{defn}\label{simpleapdefns} Given $f: \R^n \to \R$ and $\psi:\R_{\geq 0} \to \R_{>0}$, say that
\begin{itemize}
   \item  $f$ is \textsl{subhomogeneous} if $f$ is Borel measurable and there exists a constant $d = d_f \in \R_{>0}$ such that for each $t \in (0, 1)$ and each $\xx \in \R^k,$ one has $\ds |f(t\xx)| \le t^d |f(\xx)|;$
    \item $\psi$ is \textsl{regular} if $\psi$ is Borel measurable and there exist real numbers $a = a_\psi \in \R_{>1}$ and $b = b_\psi \in \R_{>0}$ such that for each $x \in \R_{>0}$, one has $b_\psi\psi(x) \le \psi(a_\psi x).$
\end{itemize}
\end{defn}}
Note that subhomogeneity is our only assumption on $f$; in particular, $f$ need not be a polynomial or even continuous. 
See \cite[Definition 2.2]{fkms} for another instance of using the regularity assumption on {the approximating function}
in the context of  \da. 

\smallskip

Now and henceforth, we shall 
denote by $m$ the Lebesgue measure on a Euclidean space of any dimension. (The dimension will be clear from the context.)
The following is a special case of 
our main results, Theorems \ref{mainapproxthmnew} and  \ref{UniformMain}.

\begin{thm} \label{simplemainapproxthm}
Let $\eta$ and $\nu$ be arbitrary norms on $\R^n,$ let $\ds f : \R^n \to \R^\ell$ be subhomogeneous, and let $\psi : \R_{\geq 0} \to (\R_{>0})^\ell$ be regular and nonincreasing. Then 
\smallskip

\begin{itemize}
\item[ \rm (i)] If $m\left(A_{f, \psi, \eta}\right)$ 
is finite (resp., infinite),
then $f\circ g$ is $(\psi, \nu)$-approximable for Haar 
almost no (resp., almost every)
$g \in \SL$. 
\item[\rm (ii)]
Suppose that $\ds \sum_{k=1}^{\infty}\frac1{m(B_{f, \psi(2^k), \eta, 2^k})}  < \infty$; 
then $f \circ g$ is {\sl uniformly} $\ds \left(\psi, \nu \right)$-approximable for Haar almost every $g \in \SL$.  
\end{itemize}
\end{thm}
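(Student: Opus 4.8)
The approach is to reduce both parts to a Borel–Cantelli argument on the homogeneous space $X = \SL/\SL(\Z)$ (or, more precisely, on $\ASL/\ASL(\Z)$ when $G$ is affine; for this special case $G = \SL$ the relevant space is the space of unimodular lattices), using Rogers-type second-moment estimates for the Siegel transform. The key observation is that $\card(\Z^n \cap A_{f,\psi,\nu}) = \infty$ (resp. the uniform condition) is an event that can be detected by counting lattice points of $g\Z^n$ in a nested family of sets, so the Siegel transform $\widehat{\mathbbm 1_S}(g\Z^n) = \sum_{\vv \in \Z^n_{\ne 0}} \mathbbm 1_S(g\vv)$ of suitable sets $S$ becomes the natural random variable to study.

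For part (ii), I would first use regularity and monotonicity of $\psi$ together with subhomogeneity of $f$ to pass from the continuous family $\{B_{f,\psi(T),\nu,T}\}_{T>0}$ to the dyadic family $S_k := B_{f,\psi(2^k),\eta,2^k}$: the point is that if $g\Z^n$ meets $S_k$ for all large $k$, then (after absorbing the bounded distortion between $\eta$ and $\nu$ and the factors $a_\psi, b_\psi, 2^{d_f}$ into constants) $f\circ g$ is uniformly $(\psi,\nu)$-approximable. Next, I would dilate: writing $S_k$ in terms of a fixed-shape set acted on by a diagonal-type element $\mathrm{diag}(\text{stuff})$, one sees $m(S_k)$ is comparable to the quantity appearing in the hypothesis. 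The divergence-Borel–Cantelli step then requires a lower bound on the measure of $\{g : \widehat{\mathbbm 1_{S_k}}(g) \ge 1\}$, which by Chebyshev/Paley–Zygmund needs the first moment (Siegel's formula: $\int \widehat{\mathbbm 1_{S_k}} = m(S_k)$, up to normalization) and the second moment (Rogers: $\int \widehat{\mathbbm 1_{S_k}}^2 \ll m(S_k)^2 + m(S_k)$). Since the hypothesis $\sum 1/m(S_k) < \infty$ forces $m(S_k)\to\infty$, the second moment is dominated by $m(S_k)^2$, so $\{g : g\Z^n \cap S_k = \varnothing\}$ has measure $\ll 1/m(S_k)$, and summability plus convergence-Borel–Cantelli gives that almost every $g$ has $g\Z^n \cap S_k \ne \varnothing$ for all large $k$, as desired.

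For part (i), the convergence half is the easy direction: if $m(A_{f,\psi,\eta}) < \infty$ then, slicing $A_{f,\psi,\eta}$ into dyadic shells $A_k := A_{f,\psi,\eta} \cap \{2^{k} \le \eta(\xx) < 2^{k+1}\}$, one has $\sum_k m(A_k) < \infty$, Siegel's formula gives $\int \widehat{\mathbbm 1_{A_k}} = c\, m(A_k)$, Chebyshev bounds $m(\{g : \widehat{\mathbbm 1_{A_k}}(g)\ge 1\}) \le c\,m(A_k)$, and convergence-Borel–Cantelli shows that for a.e. $g$ only finitely many shells contribute lattice points — hence $g\Z^n \cap A_{f,\psi,\eta}$ is finite, and by the norm-equivalence remark the same holds with $\nu$. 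For the divergence half one runs the same Rogers second-moment / Paley–Zygmund machine as in (ii), but now one must handle the fact that different dyadic shells are not independent; the standard fix (going back to the Schmidt/Sullivan-type arguments and their use in this circle of problems) is a quasi-independence estimate showing $\int \widehat{\mathbbm 1_{A_j}} \widehat{\mathbbm 1_{A_k}} \ll m(A_j) m(A_k)$ for $|j-k|$ large, which again follows from Rogers' formula after controlling the cross terms, and then a variance computation over partial sums $\sum_{k \le N} \widehat{\mathbbm 1_{A_k}}$ combined with the divergence $\sum_k m(A_k) = \infty$.

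The main obstacle I expect is \emph{not} the abstract Borel–Cantelli scaffolding but the verification that the relevant sets $S_k$ and $A_k$ are "admissible" for Rogers-type estimates — i.e. that they are bounded (or that their unbounded parts are negligible) and sufficiently regular in shape — which is exactly where subhomogeneity of $f$ is used to control the set where $|f|$ is small but $\|\vv\|$ is large, and where one must be careful that the cusp excursions of $g\Z^n$ do not spoil the second-moment bound. In the general-$G$ setting of Theorems~\ref{mainapproxthmnew} and~\ref{UniformMain} this is presumably abstracted into the axioms on $G$ that "allow for the use of Rogers-type estimates"; here for $G = \SL$ one may cite the classical Rogers/Schmidt second-moment formulas directly. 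The monotonicity and regularity hypotheses on $\psi$, and subhomogeneity of $f$, are precisely what make $m(A_{f,\psi,\eta})$ comparable to $\sum_k m(A_k)$ and let one replace the continuum of scales $T$ by the single geometric sequence $2^k$, so these structural assumptions should be invoked early and then not revisited.
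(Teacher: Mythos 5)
Your global architecture (Siegel mean value formula plus a Rogers-type second moment plus Borel--Cantelli on the space of lattices, then a transfer to statements about $f\circ g$) is the same as the paper's, but the transfer step is where the real work lies, and as written it has a genuine gap. The counting statements you establish concern points $\ww = g\vv$ of the lattice $g\Z^n$ lying in sets cut out by conditions on $\eta(\ww)$, whereas $(\psi,\nu)$-approximability and uniform $(\psi,\nu)$-approximability of $f\circ g$ are conditions on the integer vector $\vv$ itself, namely $|f(g\vv)|\les\psi\big(\nu(\vv)\big)$ and $\nu(\vv)\le T$. Converting between $\nu(\vv)$ and $\eta(g\vv)$ costs the operator norms $\|g\|$, $\|g^{-1}\|$, which are unbounded over $\SL$; this is not covered by ``the bounded distortion between $\eta$ and $\nu$'' or by the fixed constants $a_\psi,b_\psi,2^{d_f}$, so your implication ``if $g\Z^n$ meets $S_k$ for all large $k$ then $f\circ g$ is uniformly $(\psi,\nu)$-approximable'' and your appeal to ``the norm-equivalence remark'' in the convergence half of (i) do not go through as stated. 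The paper devotes \S\ref{zerofull} to exactly this: for (i) it uses Lemma \ref{normimmmeas} (subhomogeneity) to get $m(A_{f,N\psi,\nu})<\infty$ for \emph{every} $N\in\Z_{\ge1}$, applies the Siegel-type finiteness to this countable family, and then shows via regularity and Lemma \ref{sublinearnew} that any $g$ with $f\circ g$ being $(\psi,\nu)$-approximable would place infinitely many points of $g\Z^n$ in $A_{f,N\psi,\nu}$ for some $N=N(g)$; for (ii) it applies Theorem \ref{genericcounting}(iii) to the shrunken sets $B_{f,\psi(t_k),\nu,\,t_k/(2D_K)}$ with $g$ restricted to a compact $K\subset G$ (so the distortion constant is uniform), uses Lemma \ref{uniformscaling} to see that this shrinking and the constant $b^j$ change the measures only by bounded factors (preserving the summability hypothesis), and finishes by $\sigma$-compactness. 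Some such countable-exhaustion device is indispensable, and it is absent from your sketch.

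Two further points. First, for the divergence half of (i) you propose a quasi-independence estimate $\int\widehat{\mathbbm{1}_{A_j}}\,\widehat{\mathbbm{1}_{A_k}}\,d\mu_X\ll m(A_j)m(A_k)$ for well-separated shells plus a Paley--Zygmund/second Borel--Cantelli argument; the paper avoids this entirely by applying the Rogers bound to the nested sets $E_t=A\cap\{\|\xx\|\le t\}$ (your partial sums), so a single variance bound of order $m(E_t)$, Chebyshev along a sparse sequence with $m(E_{t_k})=k^\gamma$, the easy Borel--Cantelli, and monotone interpolation give Schmidt-type counting (Theorem \ref{genericcounting}(ii)); the pairwise correlation bound you invoke is neither what Rogers' formula gives in that form nor needed. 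Second, your remark that for $G=\SL$ one may ``cite the classical Rogers/Schmidt second-moment formulas directly'' fails for $n=2$, which the theorem includes: the classical second-moment bound over $\Z^n_{\ne0}$ requires $n\ge3$, and the paper handles $n=2$ through the primitive-point second moment together with the interpolation Lemma \ref{interpolation} (yielding $(\Z^2_{\ne0},r)$-Rogers type only for $r\in(1,2)$, see Theorem \ref{SL2}), or by working with $\Z^2_{\mathrm{pr}}$.
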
    

Part (i) is consistent with many other results in \da, where the finitude versus infinitude of the volume of a certain set provides a necessary and sufficient condition for the existence of finitely versus infinitely many solutions of certain inequalities almost everywhere. That being said, it seems remarkable that so very little needs to be assumed in order to have such a result. Moreover, a byproduct of Theorem \ref{simplemainapproxthm}(i) is that, under the above assumptions on $f$ and $\psi$, the finitude versus infinitude of $m\left(A_{f, \psi, \nu}\right)$ does not depend on the choice of the norm $\nu$. This is stated explicitly in Lemma \ref{normimmmeas} below.

\smallskip
{We shall show in \S\ref{examplesvol} that Theorem \ref{simplemainapproxthm} implies the following result, a special case of Corollary \ref{quadforms} that concerns the approximability of a function that is essentially a generalized indefinite quadratic form:}

\begin{cor}\label{spequadforms}
Let $d \in  {\R_{\geq 1}}$, 
and fix any $p, q \in \Z_{\geq 1}$ with $p+q = n$. Let $\nu$ be a norm on $\R^n.$ 
Let $\ds f : \R^n \to \R$ be given by \eq{special}{f(\xx) 
 := \sum_{j=1}^p  \left|x_j\right|^d - \sum_{k=p+1}^{n}   \left|x_{k}\right|^d.}
Let $\psi : \R_{\geq 0} \to \R_{>0}$ be regular and nonincreasing.
The following then holds:
\begin{itemize}
\item[ \rm (i)] 
 If $\ds \int_1^\infty \psi({z}) {z}^{n-(d+1)} \, d{z}$ 
is finite (resp., infinite),
then $f\circ g $ is $
(\psi, \nu)$-approximable for almost no (resp., almost every)
$g \in \SL$.

\item[ \rm (ii)]
Suppose that 
$$\begin{aligned}\sum_{k=1}^{\infty} \frac1{ k\psi(2^k)} &< \infty  \ \textsl{ if } \ d = n;\\ \sum_{k=1}^{\infty} \frac1{ 2^{(n-d)k}\psi(2^k)} &< \infty \ \textsl{ if } \   {d<n} .\end{aligned}$$ Then $f \circ g$ is  uniformly $(\psi, \nu)$-approximable for almost every  $g \in \SL.$
\end{itemize} 
\end{cor}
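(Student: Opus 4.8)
The plan is to check that $f$ from \equ{special} and the given $\psi$ fit the hypotheses of Theorem \ref{simplemainapproxthm}, and then to carry out the two volume computations that translate the abstract conditions there into the explicit ones claimed here. First, $f$ is continuous, and since the right-hand side of \equ{special} is positively homogeneous of degree $d$ we have $|f(t\xx)| = t^{d}|f(\xx)| \les t^{d}|f(\xx)|$ for $t \in (0,1)$ and $\xx \in \R^{n}$; hence $f$ is subhomogeneous with $d_{f} = d$, and here $\ell = 1$. As $\psi$ is assumed regular and nonincreasing, Theorem \ref{simplemainapproxthm} applies; since the norm $\eta$ appearing in its hypotheses is a free parameter (and, by Lemma \ref{normimmmeas}, the finitude of $m(A_{f,\psi,\cdot})$ does not depend on it), we may and do take $\eta$ to be the Euclidean norm. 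It then suffices to prove two things: that $m(A_{f,\psi,\eta})$ is finite if and only if $\int_{1}^{\infty}\psi(z)\,z^{\,n-d-1}\,dz < \infty$; and that, for all large $k$, the quantity $m(B_{f,\psi(2^{k}),\eta,2^{k}})$ is comparable to $2^{(n-d)k}\psi(2^{k})$ when $d < n$ and to $k\,\psi(2^{k})$ when $d = n$. Granting these, parts (i) and (ii) follow at once from Theorem \ref{simplemainapproxthm}(i) and (ii), because the two displayed series are then precisely $\sum_{k} 1/m(B_{f,\psi(2^{k}),\eta,2^{k}})$.

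Both computations rest on one estimate obtained by passing to polar coordinates. Writing $\xx = r\uu$ with $r = \eta(\xx)$ and $\uu \in S^{n-1}$, and using $|f(r\uu)| = r^{d}|f(\uu)|$, one has for every Borel $E\subseteq\R^{n}$ the identity $m(E) = \int_{0}^{\infty} r^{n-1}\,\sigma\big(\{\uu \in S^{n-1} : r\uu \in E\}\big)\,dr$, where $\sigma$ is surface measure. The geometric heart of the matter is the claim that there is a constant $C > 1$, depending only on $n,p,q,d$, with \[ C^{-1}\min(\delta,1) \ \le\ \sigma\big(\{\uu \in S^{n-1} : |f(\uu)| \le \delta\}\big) \ \le\ C\min(\delta,1) \qquad (\delta > 0). \] To prove this, observe that off the union of the coordinate hyperplanes --- a set of $\sigma$-measure zero --- $f$ is $C^{1}$, with $\nabla f(\xx) = d\big(|x_{1}|^{d-1}\operatorname{sgn} x_{1},\dots,|x_{p}|^{d-1}\operatorname{sgn} x_{p},-|x_{p+1}|^{d-1}\operatorname{sgn} x_{p+1},\dots\big)$. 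The set $\{f=0\}\cap S^{n-1}$ is nonempty and compact (by the intermediate value theorem, since $f(e_{1}) > 0$ and $f(e_{n}) < 0$), and at each of its points both groups of coordinates of $\uu$ contain a nonzero entry (else $\uu = 0$); an identity $\nabla f(\uu) = \lambda\uu$ would then force $|u_{i}|^{d-2}$ to equal $\lambda/d$ for every nonzero coordinate with $i \le p$ and $-\lambda/d$ for every nonzero coordinate with $i > p$, which is impossible --- here $p,q \ge 1$ is used crucially. Hence the gradient of $f$ tangential to $S^{n-1}$ is bounded away from $0$ and $\infty$ on a neighborhood of $\{f=0\}\cap S^{n-1}$, and $\mathcal{H}^{n-2}\big(\{f=t\}\cap S^{n-1}\big)$ is bounded above and below for small $|t|$; the coarea formula applied to $f|_{S^{n-1}}$ gives the two-sided linear bound for small $\delta$, and for $\delta$ bounded away from $0$ both sides are plainly comparable to $1$.

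It remains to unwind the claim. For the first assertion, $m(A_{f,\psi,\eta}) = \int_{0}^{\infty} r^{n-1}\,\sigma\big(\{|f(\uu)| \le \psi(r)r^{-d}\}\big)\,dr \asymp \int_{0}^{\infty} r^{n-1}\min\big(1,\psi(r)r^{-d}\big)\,dr$; the part over any bounded range of $r$ is finite (the integrand is at most $r^{n-1}$), and since $\psi$ is bounded above by $\psi(0)$ while $r^{d}\to\infty$, there is $R_{0}$ with $\psi(r)r^{-d} \le 1$ for $r \ge R_{0}$, so the tail equals $\int_{R_{0}}^{\infty}\psi(r)\,r^{\,n-d-1}\,dr$; thus $m(A_{f,\psi,\eta})$ is finite exactly when $\int_{1}^{\infty}\psi(z)\,z^{\,n-d-1}\,dz$ is. For the second, fix large $k$, put $T = 2^{k}$ and $\epsilon = \psi(T) \le \psi(0)$, so that $r_{0} := \epsilon^{1/d} < 1 < T$; then $m(B_{f,\epsilon,\eta,T}) \asymp \int_{0}^{T} r^{n-1}\min\big(1,\epsilon r^{-d}\big)\,dr = \int_{0}^{r_{0}} r^{n-1}\,dr + \epsilon\int_{r_{0}}^{T} r^{\,n-d-1}\,dr$, which is a bounded multiple of $\epsilon^{n/d}$ plus $\epsilon$ times either $T^{n-d}$ (if $d < n$) or $\log(T/r_{0})$ (if $d = n$); since $\epsilon < 1$ and $n/d \ge 1$, the first term is dominated, giving $m(B_{f,\epsilon,\eta,T}) \asymp \epsilon\,T^{n-d}$ for $d < n$ and $m(B_{f,\epsilon,\eta,T}) \asymp \epsilon\log T \asymp \epsilon\,k$ for $d = n$, the correction of size $\epsilon\log(1/\epsilon)$ being of lower order. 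Substituting $T = 2^{k}$ and $\epsilon = \psi(2^{k})$ completes the reduction.

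The principal obstacle is the geometric claim --- more precisely, that the spherical sublevel-set measure grows \emph{linearly} in $\delta$, with no loss, near the zero set; this is exactly where the structure of $f$ (its genuine indefiniteness, $p,q \ge 1$) enters, to rule out a radial gradient along $\{f=0\}\cap S^{n-1}$. A minor technical point is the non-smoothness of $|x|^{d}$ on the coordinate hyperplanes: for $d > 1$ the function $f$ is in fact $C^{1}$ on all of $\R^{n}$, while for $d = 1$ one works on the complement of small neighborhoods of those hyperplanes (where $f$ is smooth, indeed piecewise linear) and checks that the excised region contributes negligibly. A second, purely bookkeeping, nuisance is the endpoint $d = n$, where the extra $\epsilon\log(1/\epsilon)$ term appears; it is of lower order relative to $k\,\psi(2^{k})$ and does not affect the statement. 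The remaining steps --- the polar-coordinate bookkeeping, the passage to dyadic scales via monotonicity of $\psi$, and the reduction to Theorem \ref{simplemainapproxthm} --- are routine.
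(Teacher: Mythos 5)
Your proposal is correct, but it follows a genuinely different route from the paper. The paper obtains Corollary \ref{spequadforms} as a special case of Corollary \ref{quadforms}, whose proof exploits the norm-independence built into Lemma \ref{normimmmeas} and Theorems \ref{mainapproxthmnew}, \ref{UniformMain}: choosing the norm \equ{norm} adapted to the block structure of $f$, the set $A_{f,\psi,\nu}\cap\{S\le\nu(\xx)\le T\}$ becomes a union of two radially described regions over $\ell^d$-spheres, so its measure is computed \emph{exactly} as a one-dimensional integral, and the Taylor estimate \equ{taylor} shows the integrand is comparable to $\psi(z)z^{n-(d+1)}$; no smoothness of $f$ and no differential-geometric input is needed, and the same formula feeds both the asymptotic and the uniform statements (and works for the general $G$ and $\mathcal{P}$ of \S\ref{examplesvol}). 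You instead keep the Euclidean norm and reduce both volume estimates to the two-sided bound $\sigma\big(\{\uu\in S^{n-1}:|f(\uu)|\le\delta\}\big)\asymp\min(\delta,1)$, proved by a coarea/transversality argument near $\{f=0\}\cap S^{n-1}$; this is a legitimate alternative, and it is norm-agnostic and would apply to other homogeneous $f$ whose spherical sublevel measures can be estimated, at the price of genuine (if standard) regularity work that the paper's bespoke-norm computation avoids entirely. A few points in your sketch should be tightened: Euler's identity $\uu\cdot\nabla f(\uu)=d\,f(\uu)$ shows at once that $\nabla f$ is tangential on $\{f=0\}\cap S^{n-1}$ and there $|\nabla f|$ is bounded below on the sphere, which streamlines your non-radiality argument (the hypothesis $p,q\ge1$ is really needed only to make the zero set nonempty, i.e.\ for the lower bound); for $d=1$ the remark about excising neighborhoods of the coordinate hyperplanes does not suffice as stated, since the excised band has measure of the order of its width rather than of order $\delta$ --- argue instead orthant by orthant, where $f$ is smooth up to the boundary, or take the band width comparable to $\delta$; the uniform upper and lower bounds on $\mathcal{H}^{n-2}\big(\{f=t\}\cap S^{n-1}\big)$ for small $|t|$, which the coarea formula requires, deserve an explicit sentence (compactness plus the implicit function theorem); and in the case $d=n$ the claim that $\epsilon\log(1/\epsilon)$ is of lower order than $\epsilon\log T$ is not automatic --- it holds because regularity of $\psi$ forces at most polynomial decay, or one can simply observe that only the lower bound $m\big(B_{f,\psi(2^k),\eta,2^k}\big)\gtrsim k\,\psi(2^k)$ (resp.\ $2^{(n-d)k}\psi(2^k)$) is needed to apply Theorem \ref{simplemainapproxthm}(ii).
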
  


{Since $d\ge 1$ in \equ{special} is assumed to be arbitrary as opposed to an even integer as in \equ{dthpowers}, the above corollary   generalizes the aforementioned work of Athreya\textendash Margulis and Kelmer\textendash Yu.}  
 In particular, we can conclude that for $\nu$ and $f$ as in Corollary \ref{spequadforms} and for almost every $g\in\SL$, the function $f\circ g$ is 
 \begin{itemize}
 \item $\ds (\varphi_{n-d}, \nu)$-approximable (the critical exponent case), and
\smallskip

 \item uniformly $\ds (\psi, \nu)$-approximable, where $\ds \psi({z}) =  \frac{(\log {z})^{1+\varepsilon}}{{z}^{n-d}}$ for an arbitrary $\varepsilon > 0$ (the critical exponent case with a logarithmic correction).
  \end{itemize} 


 
 
{We note that for any regular and nonincreasing $\psi : \R_{\geq 0} \to \R_{>0}$ and any $d \in \R_{>n},$ the integral in 
{Corollary \ref{spequadforms}(i)} converges because it is majorized by $$ \psi(1) \int_1^\infty  {z}^{n-(d+1)} \ d{z} < \infty;$$ in this case, almost every element in the $\SL$ orbit of $f$ in $\equ{special}$ is not $(\psi, \nu)$-approximable and hence is not uniformly $(\psi, \nu)$-approximable. } Other applications of Theorem \ref{simplemainapproxthm}  can be found in \S\ref{examplesvol}.

\ignore{the results of \cite{Bourgain, Pol, GhoshKelmer, KY2, GhoshGorodnikNevo} involve a notion of approximability stronger than the one considered in this paper. To be precise, 
say that $f$ is \textsl{uniformly} $(\psi, \nu)$-\textsl{approximable} if for every sufficiently large $T \in \R_{>0}$ there exists $\ds \vv \in \Z^n$ with \[0 < \nu(\vv) < T \text{ and }|f(\vv)| \le \psi(T).\]
See, for instance, \cite[\S1.1]{W}  for a discussion of asymptotic versus uniform approximation in metric number theory.
It is easy to verify that if $\psi$ is nonincreasing and $f$ does not represent $0$ nontrivially, then the uniform $(\psi, \nu)$-approximability of $f$ implies its $(\psi, \nu)$-approximability.
All the aforementioned papers actually exhibited conditions sufficient for uniform $(\psi, \nu)$-approximability of generic elements of the $\SL$-orbit of a given polynomial.  
For instance, it is proved in \cite{KY2} that if $n \geq 3$, $d$ is any even integer with $\ds 0 < d < n$, $s \in (0, n-d)$ is arbitrary, and $f$ is as in \equ{dthpowers}, then almost every element in the $\SL$-orbit of $f$ is uniformly $(\psi, \nu)$-approximable for a convenient choice of norm $\nu.$
It seems to be a challenging problem to find necessary and sufficient conditions on $\psi$ under which uniform approximability is generic in $\SL$-orbits. For example, this problem is open for functions $\varphi_s$ when $s$ is the critical exponent\textemdash such as $s = 1$ in the case of ternary quadratic forms.}

\smallskip

Historically, there have been several different approaches to this circle of problems. In particular, the papers  \cite{GhoshKelmer} and \cite{GhoshGorodnikNevo} continue the line of thought behind Margulis' proof of the Oppenheim Conjecture, reducing the problem to studying the action of the stabilizer of the function $f$ on the space of lattices, and using ergodic properties of the action to establish quantitative density of $f(\Z^n)$. In the present paper, however, we follow the methods of \cite{Pol,  KY2}, which have their origin in the work of 
{Rogers and Schmidt \cite{RogSet, Schmidt} 
and involve studying the} asymptotics of the number of lattice points of generic lattices in families of subsets of $\R^n$.
One of the advantages of the approach taken in this paper is that it makes it possible to significantly generalize the setting. In particular, one can {work with} vector-valued functions $f = (f_1, \dots , f_\ell) : \R^n \to \R^\ell$, 
and can consider 
specific subsets of $\Z^n,$ for example the set of all primitive integer points $\Z^n_{\rm pr}$. 
\smallskip

It is also worth mentioning that  the aforementioned papers were dealing with the density of $f(\Z^n)$ in $\R$, not just at zero. In other words, for various examples of polynomials $f$, these papers presented conditions depending on $s \in \R_{>0}$ sufficient for showing that for every $\xi \in \R$, almost every $g \in \SL,$ 
and every sufficiently large $T \in \R_{>0}$ 
there exists $\vv 
\in \Z^n_{\ne0}$ for which 
\eq{inhsystem} {{\nu(\vv) \le T} \text{ and }|\xi - f(g\vv)| \le T^{-s}.} 
{See, for instance, the two recent papers \cite{GKY_a, GKY_b} of Ghosh\textendash Kelmer\textendash Yu.} 
{We discuss a possible approach to this case, the inhomogeneous one, in \S\ref{inh}, and plan to address it in a forthcoming paper.}

\smallskip

\ignore{To state our more general results, let us introduce some more notation and terminology. Now and hereafter, we let ${\ell}$ denote an arbitrary element of $\Z_{>0}.$

\begin{defn}\label{order}
We define a non-strict partial order $\les$ on $\R^{\ell}$ as follows. For any $\ds \xx = (x_1, \dots , x_{\ell}) \in \R^{\ell}$ and any $\ds \yy = (y_1, \dots , y_{\ell}) \in \R^{\ell},$ write $\xx \les \yy$ if and only if for each $j \in \{1, \dots , {\ell}\},$ one has $x_j \leq y_j.$
\end{defn}

\begin{defn}\label{basicapdefns}
Let \[f = (f_1, \dots , f_{\ell}) : \R^n \to \R^{\ell} \ \ \ \ \ \text{and} \ \ \ \ \  \psi = (\psi_1, \dots , \psi_{\ell}) : \R_{\geq 0} \to \left(\R_{>0}\right)^{{\ell}}\] be given, let $T$ be an arbitrary element of $\R_{>0},$ let ${\pmb\varepsilon} \in \left(\R_{>0}\right)^\ell,$ let $\nu$ be an arbitrary norm on $\R^n,$ and let $\ds {{\mathcal{P}}}$ be an arbitrary subset of $\Z^n.$ 
\begin{itemize}
    \item  We abuse notation and write $|f|$ to denote the function $\ds (|f_1|, \dots , |f_\ell| ) :\R^n \to \R^{\ell}.$
    \item We define $\ds A_{f, \psi, \nu} := \left\{ \xx \in \R^n : |f(\xx)| \les \psi\left(\nu(\xx)\right)\right\}.$
    \item We define $\ds B_{f, \varepsilon, \nu, T} := \left\{ \xx \in \R^n : 0<\nu(\xx) < T  \ \text{and} \ |f(\xx)| \les {\pmb\varepsilon} \right\rbrace.$
    \item We say that $f$ is $(\psi, \nu, {{\mathcal{P}}})$-\textsl{approximable} if $A_{f, \psi, \nu} \cap {{\mathcal{P}}}$ has infinite cardinality.
    \item We say that $f$ is \textsl{uniformly}  $(\psi, \nu, {{\mathcal{P}}})$-\textsl{approximable} if there exists $\ds R = R_{f, \psi, \nu,  \mathcal{P}} \in \R_{> 0}$ such that for each $r \in \R_{\geq R},$ the set $B_{f, \psi(r), \nu, r} \cap {{\mathcal{P}}}$ is nonempty. 
    \item We say that  $f$ is \textsl{subhomogeneous} if each component function of $f$ is subhomogeneous as per Definition \ref{simpleapdefns}; equivalently, if $f$ is Borel measurable and there exists a constant $d = d_f \in \R_{>0}$ such that for each $t \in (0, 1)$ and each $\xx \in \R^n,$ one has $\ds |f(t\xx)| \les t^d |f(\xx)|.$
    \item We say that $\psi$ is \textsl{regular} if each component function of $\psi$ is regular as per Definition \ref{simpleapdefns}; equivalently, if $\psi$ is Borel measurable and there exist real numbers $a = a_\psi \in \R_{>1}$ and $b = b_\psi \in \R_{>0}$ such that for each $x \in \R_{>0},$ one has $b_\psi\psi(x) \les \psi(a_\psi x).$
    \item We say that $\psi$ is \textsl{nonincreasing} if each component function of $\psi$ is nonincreasing in the usual sense.
\end{itemize}
\end{defn}

\smallskip
The following theorem is an immediate consequence of Theorem \ref{mainapproxthmnew}.

\begin{thm} \label{specmainapproxthm}
Let $\eta$ and $\nu$ be arbitrary norms on $\R^n,$ let $\ds f = (f_1, \dots , f_{\ell}) : \R^n \to \R^{\ell}$ be subhomogeneous, and let $\ds \psi = (\psi_1, \dots , \psi_{\ell}) : \R_{\geq 0} \to \left(\R_{>0}\right)^{\ell}$ be regular and nonincreasing.  
\begin{itemize}
\item[ \rm (i)]
If $\ds m\left(A_{f, \psi, \eta}\right) = \infty,$ then the following is true: \\
for Haar almost every $g \in \SL,$ the map 
$f \circ g
$ is $\ds \left(\psi, \nu, \Z_{\mathrm{pr}}^n\right)$-approximable.

\item[ \rm (ii)]
If $\ds m\left(A_{f, \psi, \eta}\right) < \infty,$ then the following is true: \\
for Haar almost every $g \in \SL,$ 
$f \circ g
$ is not $\ds (\psi, \nu, \Z^n)$-approximable.
\end{itemize}
\end{thm}    

We also obtain a sufficient condition, which is an immediate consequence of Theorem \ref{UniformMain}, in the case of uniform approximation

\begin{thm}\label{specunimain}
Let $\eta$ and $\nu$ be arbitrary norms on $\R^n,$ let $\ds f = (f_1, \dots , f_{\ell}) : \R^n \to \R^{\ell}$ be subhomogeneous, and let $\ds \psi = (\psi_1, \dots , \psi_{\ell}) : \R_{\geq 0} \to \left(\R_{>0}\right)^{\ell}$ be nonincreasing. Suppose that $\ds \sum_{k=1}^{\infty} \left(m\left(B_{f, \psi(k), \nu, k}\right)\right)^{-1} < \infty.$ Then for Haar almost every $g \in \SL,$ $f \circ g$ is uniformly $\ds \left(\psi, \eta, \Z_{\rm pr}^n\right)$-approximable.   
\end{thm}

\smallskip}

Theorems \ref{mainapproxthmnew} and  \ref{UniformMain}, our main results, are essentially a generalization of Theorem \ref {simplemainapproxthm}   to a class of groups that act on $\R^n$ 
and 
satisfy certain axioms, which $\SL$ happens to satisfy. Another example of such a group is $\Spn$, the group of symplectic linear isomorphisms of $\R^{n}$ when $n \in \Z_{>0}$ is 
even, or the group $\ds \ASL := \SL \ltimes \R^n$ of unimodular affine isomorphisms of $\R^n$. For the infinite measure case of Theorem \ref{mainapproxthmnew}, we actually obtain
a quantitative version 
when we stipulate that the element $g$ lie in an arbitrary fixed compactum of the group. 

\smallskip

Let us briefly delineate the structure of this paper. In \S\ref{generalities}, we define a class of groups that satisfy certain axioms conducive to proving our main {Diophantine results}.
The utility of these axioms is that they enable us to prove generic counting results in certain spaces of lattices; our approach is a generalization of the method developed by 
Schmidt in \cite{Schmidt}. Using the axioms on $f$ and $\psi$ that have already been introduced, we then proceed {in \S\ref{zerofull}} to transfer the results concerning the space of lattices to {those} concerning Diophantine approximation. In \S\ref{examplesvol},  we then 
{discuss} 
specific examples of subhomogeneous $f$ {to obtain conditions for approximability in terms of the convergence or divergence of certain infinite series or improper integrals, as in Corollary \ref{spequadforms}.}  


\ignore{Since our results yield information about multivariate Diophantine approximation, it is only fitting that we present an example with $r > 1$ in this introductory section.
\begin{cor}\label{quadlinear}
Let $\nu$ be an arbitrary norm on $\R^3.$ Define $f: \R^3 \to \R^2$ by \[f(x_1,x_2,x_3) := (x_1^2 + x_2^2 - x_3^2, x_1),\]
and let $\ds \psi = (\psi_1,\psi_2): \R_{\geq 0}\to \left(\R_{>0}\right)^2$ be regular and nonincreasing.

For almost every $g \in G,$ the function $f\circ g$ is $\ds \left(\psi, \nu, \Z_{\mathrm{pr}}^n\right)$-approximable (resp., is not $(\psi, \nu, \Z^n)$-approximable) if the integral  \[ \ds \int_1^\infty \psi_1(r)\psi_2(r) \,dr \] is infinite (resp., is finite).
\end{cor}  
\comm{Need to prove or disprove this.}

\smallskip
}

Possible examples with which we do not concern ourselves here abound: one can, for example, take $f$ to be a system of several quadratic forms or a pair consisting of a quadratic and a linear form, as in the papers \cite{G,G1, Anishnew}. 
{It also appears very likely that one could use \cite[Proposition 5.2 and Theorem 6.1]{H} to prove $S$-arithmetic analogues over $\mathbb{Q}$ of the results of this paper. Further possible extensions and open questions are mentioned in \S\ref{cr}.}

\subsection*{Acknowledgements}
The first-named author is immensely grateful to Gregory Margulis for a multitude of conversations on the subject of the Oppenheim Conjecture and related topics. Thanks are also due to  Jayadev Athreya, Anish Ghosh, Alex Gorodnik, {Jiyoung Han}, Dubi Kelmer, {Dave Morris, and Amos Nevo} for stimulating  discussions, {and to the anonymous referee for several useful suggestions}.

\section{
Counting results for generic lattices}\label{generalities}




Let $G$ be a closed subgroup of  $\ASL$, and let  
$\Gamma$ be the subgroup of $G$ 
defined by \eq{gamma}{ \Gamma
:= \{ g \in G : g\Z^n = \Z^n \}.} Now and hereafter, we assume that $\Gamma$ is a lattice in $G$; that is, $\Gamma$ is a discrete subgroup of $G$ whose covolume in $G$ is finite. (In each particular example of such a group $G$ that we shall consider, the subgroup $\Gamma$ will indeed be a lattice in $G.$) Set $X := G/\Gamma.$ Notice that we then have a well-defined bijection between 
$X$ and $\ds \{ g\Z^n : g \in G\}$ that is given by $g\Gamma \longleftrightarrow g\Z^n.$ We therefore identify $X$ with $\ds \{ g\Z^n : g \in G\}$, and we equip $X$ with the quotient topology.

Now let ${{\mathcal{P}}}$ be any $\Gamma$-invariant subset of $\Z^n.$ Given any $\Lambda \in X,$ fix any $g \in G$ for which $\Lambda = g\Z^n$; then define $\Lambda_{{\mathcal{P}}} := g{{\mathcal{P}}}.$ Then $\Lambda_{{\mathcal{P}}}$ is well-defined because ${{\mathcal{P}}}$ is $\Gamma$-invariant.

Given any 
function $f : \R^n \to \R_{\geq 0},$ we 
define its ${{\mathcal{P}}}$-\textsl{Siegel transform} $\ds \widehat{f}^{{\,}^{\mathcal{P}}} : X \to {[0, \infty]}$ by \[ \widehat{f}^{{\,}^{\mathcal{P}}}(\Lambda) := \sum_{\vv \in \Lambda_{{\mathcal{P}}}} f(\vv). \]



%

\smallskip

We equip $G$ with the left Haar measure $\mu_G$ that is normalized so that any fundamental Borel set in $G$ for $X$ has $\mu_G$-measure equal to $1.$ We then let $\mu_X$ be the left $G$-invariant Borel probability measure on $X$ that is induced from $\mu_G$ in the canonical manner. Note that if $f$ is Borel measurable, then $\widehat{f}^{{\,}^{{\mathcal{P}}}}$ is $\mu_X$-measurable.

\ignore{Given any function $f : \R^n \to  \R_{\geq 0}$, we define its \textsl{Siegel transform} $\widehat{f}: X \to \R \cup \{\infty\}$ by \[ \widehat{f}(\Lambda) := \sum_{\vv \in  \Lambda \ssm \{\textbf{0}\}} f(\vv).\] (In the right-hand side of the above definition, we make use of the aforementioned bijective correspondence),  
and its \textsl{primitive Siegel transform} $\widetilde{f}: X \to \R \cup \{\infty\}$ by \[ \widetilde{f}(\Lambda) := \sum_{\vv \in \Lambda_{\mathrm{pr}}} f(\vv).\] Here, $\Lambda_{\mathrm{pr}}$ denotes the set of all primitive points of $\Lambda.$ Note that if $f$ is Borel measurable, then each of $\widehat{f}$ and $\widetilde{f}$ is $\mu_X$-measurable. Finally, we let $\zeta$ denote the Euler-Riemann zeta function.}

\smallskip

Let us now introduce the axioms on $G$ to which we alluded at the end of the 
introduction.

\begin{defn}\label{SiegelRogers} Let $G$ and ${{\mathcal{P}}}$ be as above.
\smallskip

\item[ \rm (i)] We say that $G$ is of ${{\mathcal{P}}}$-\textsl{Siegel type} if there exists a constant $c = {c}_{{{\mathcal{P}}}} \in \R_{>0}$ such that for any bounded and compactly supported Borel measurable function $f: \R^n \to \R_{\geq 0}$ we have \eq{siegel}{ \int_{X}\widehat{f}^{{\,}^{{\mathcal{P}}}} \, d\mu_X = c \int_{\R^n}f \, dm .}
\smallskip


\item[ \rm (ii)]  
Let $\ds r \in \R_{\geq 1}$ 
be given. We say that $G$ is of $\ds \left({{\mathcal{P}}}, r
\right)$-\textsl{Rogers type} if there exists a constant $D = D_{\mathcal{P}, r
} \in \R_{>0}$ such that for any bounded Borel $E \subset \R^n$ with $m(E) > 0$ we have \eq{rogers}{ \norm{\widehat{\mathbbm{1}_E}^{{}_{{\mathcal{P}}}} - \left(\int_{X}\widehat{\mathbbm{1}_E}^{{}_{{\mathcal{P}}}} \,d\mu_X \right) \mathbbm{1}_X}_{r} \leq D \cdot 
m(E)
^{1/r}
.} 
\end{defn}

\begin{rmk} \label{siegelremark} \rm
\begin{itemize}
\item[ \rm (i)] The definition of ${{\mathcal{P}}}$-Siegel type is nothing more than the assertion that a variant of the Siegel Mean Value Theorem\textemdash first proved by 
Siegel in the context of $\ds \SL/\mathrm{SL}_n(\Z)$ in the seminal paper \cite{Siegel}\textemdash holds for the ${{\mathcal{P}}}$-Siegel transform on $X$. Using Lebesgue's Monotone Convergence Theorem, it is easy to see that if $G$ is of $\mathcal{P}$-Siegel type, then \equ{siegel} holds for any $L^1$ function $f: \R^n \to \R_{\geq 0}.$ 
Similarly, if there exists $r \in [1, \infty)$ for which $G$ is of $(\mathcal{P}, r)$-Rogers type, then \equ{rogers} is satisfied for any (not necessarily bounded) Borel $E\subset \R^n$ of finite measure.
\smallskip

\item[ \rm (ii)]
Assuming that $G$ is of ${{\mathcal{P}}}$-Siegel type, the assumption of $({{\mathcal{P}}}, 2)$-Rogers type is equivalent to the assumption that for any bounded Borel $E \subset \R^n,$ the variance of the random variable $\widehat{\mathbbm{1}_E}^{{}_{\mathcal{P}}}$ is bounded from above by a uniform scalar multiple of the expectation of $\widehat{\mathbbm{1}_E}^{{}_{\mathcal{P}}}.$ This condition was used by 
Schmidt to great effect in \cite{Schmidt}; see a remark after Theorem \ref{genericcounting} below. The definition of $({{\mathcal{P}}}, r)$-Rogers type for arbitrary $r \in [1, \infty)$ is a natural generalization of this condition.
\smallskip

\item[ \rm (iii)] Notice that if $G$ is of ${{\mathcal{P}}}$-Siegel type, then $G$ is of $({{\mathcal{P}}}, 1)$-Rogers type.            
\end{itemize}
\end{rmk}

Before we provide some examples of groups that satisfy the various Siegel and Rogers type axioms, let us record and prove some simple facts that will be helpful going forward. 

\begin{prop}[Logarithmic Convexity of $L^p$ Norms]\label{logconvex}
Let {$(Y,  \mu)$} be a measure space. Let $r, \, t \in \R_{\geq 1}$ and $\theta \in (0, 1)$ be arbitrary. Set $\ds s := \left( \frac{\theta}{r} + \frac{1-\theta}{t}\right)^{-1} \geq 1.$ For each $\ds  f \in L^{r}{(Y,\mu)} \cap L^t{(Y,\mu)}$ we then have \[ \|f\|_{s} \leq \|f\|_{r}^\theta \cdot \|f\|_t^{1-\theta}. \]
\end{prop}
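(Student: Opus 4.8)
The statement to prove is the logarithmic convexity of $L^p$ norms, which is a classical consequence of Hölder's inequality. Let me sketch the standard proof.

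\textbf{Approach.} The plan is to apply Hölder's inequality to the factorization $|f|^s = |f|^{s\theta} \cdot |f|^{s(1-\theta)}$, choosing conjugate exponents so that the first factor lands in $L^{r/(s\theta)}$ and the second in $L^{t/(s(1-\theta))}$, and then check that these two exponents are indeed conjugate using the defining relation for $s$.

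\textbf{Key steps.} First I would record the elementary identity obtained from the definition of $s$: writing $\frac{1}{s} = \frac{\theta}{r} + \frac{1-\theta}{t}$ and multiplying by $s$ gives $1 = \frac{s\theta}{r} + \frac{s(1-\theta)}{t}$. Hence if we set $p := \frac{r}{s\theta}$ and $q := \frac{t}{s(1-\theta)}$, then $\frac{1}{p} + \frac{1}{q} = 1$, so $p$ and $q$ are conjugate exponents (and both are $\geq 1$ since $s\theta \le r$ and $s(1-\theta)\le t$; the subhomogeneous edge cases where a denominator vanishes, i.e. $\theta \in \{0,1\}$, are excluded by hypothesis). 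Second, I would write
\[
\|f\|_s^s = \int_Y |f|^{s\theta}\,|f|^{s(1-\theta)}\,d\mu \le \left(\int_Y |f|^{s\theta \cdot p}\,d\mu\right)^{1/p}\left(\int_Y |f|^{s(1-\theta)\cdot q}\,d\mu\right)^{1/q}
\]
by Hölder's inequality. Third, substituting $s\theta\cdot p = r$ and $s(1-\theta)\cdot q = t$, the right-hand side becomes $\|f\|_r^{r/p}\,\|f\|_t^{t/q} = \|f\|_r^{s\theta}\,\|f\|_t^{s(1-\theta)}$. Taking $s$-th roots yields $\|f\|_s \le \|f\|_r^{\theta}\,\|f\|_t^{1-\theta}$, as desired. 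I would also note that $f \in L^r \cap L^t$ together with finiteness of the Hölder bound guarantees $f \in L^s$, so the inequality is meaningful.

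\textbf{Main obstacle.} There is essentially no serious obstacle here; this is a routine application of Hölder's inequality. The only points requiring a little care are: verifying $p, q \ge 1$ so that Hölder applies (which follows since $0 < \theta < 1$ forces $0 < s\theta < s \le r$ when... actually one should argue $s \le \min(r,t)$ is false in general — rather, one checks directly $s\theta/r \le 1/s \cdot s = $ hmm, better to just note $\frac{s\theta}{r} = 1 - \frac{s(1-\theta)}{t} < 1$ since the subtracted term is positive, giving $p > 1$, and symmetrically $q > 1$); and handling the degenerate possibilities where $\|f\|_r$ or $\|f\|_t$ is $0$ or where $f$ vanishes a.e., which make both sides zero and are trivial. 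I would mention these briefly and otherwise present the three-line computation above.

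Since the author's proof will surely follow exactly this route, here is the proof I would write:

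\begin{proof}
We may assume $f$ is not $\mu$-almost everywhere zero, as otherwise both sides vanish. From the definition of $s$ we have
\[
1 = \frac{s\theta}{r} + \frac{s(1-\theta)}{t}.
\]
Set $\ds p := \frac{r}{s\theta}$ and $\ds q := \frac{t}{s(1-\theta)}$. Then $\ds \frac1p + \frac1q = 1$ by the displayed identity, and moreover $\ds \frac1p = \frac{s\theta}{r} = 1 - \frac{s(1-\theta)}{t} < 1$ since $\ds \frac{s(1-\theta)}{t} > 0$ (because $\theta \in (0,1)$ and $s, t > 0$); thus $p > 1$, and symmetrically $q > 1$. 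Hence $p$ and $q$ are conjugate exponents. Writing $|f|^s = |f|^{s\theta}\cdot|f|^{s(1-\theta)}$ and applying H\"older's inequality with exponents $p$ and $q$,
\[
\|f\|_s^s = \int_Y |f|^{s\theta}\,|f|^{s(1-\theta)}\,d\mu \le \left(\int_Y |f|^{s\theta p}\,d\mu\right)^{1/p}\left(\int_Y |f|^{s(1-\theta)q}\,d\mu\right)^{1/q}.
\]
Since $s\theta p = r$ and $s(1-\theta)q = t$, the right-hand side equals $\ds \|f\|_r^{r/p}\,\|f\|_t^{t/q} = \|f\|_r^{s\theta}\,\|f\|_t^{s(1-\theta)}$, which is finite because $f \in L^r(Y,\mu)\cap L^t(Y,\mu)$. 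Taking $s$-th roots gives
\[
\|f\|_s \le \|f\|_r^{\theta}\,\|f\|_t^{1-\theta},
\]
as claimed.
\end{proof}
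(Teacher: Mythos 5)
Your proof is correct: the identity $1=\frac{s\theta}{r}+\frac{s(1-\theta)}{t}$, the choice of conjugate exponents $p=\frac{r}{s\theta}$, $q=\frac{t}{s(1-\theta)}$, and the application of H\"older's inequality to $|f|^{s\theta}\cdot|f|^{s(1-\theta)}$ all check out, and you handle the degenerate case appropriately. The paper itself does not write out this computation; it simply cites the result as a special case of the Riesz--Thorin interpolation theorem (Proposition 7.37 and Theorem 7.38 of Einsiedler--Ward). So your route differs only in that it is self-contained and elementary, trading a reference to interpolation theory for a three-line H\"older argument; both are standard, and nothing is lost either way for the purposes of the paper, since only this special case is needed.
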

\begin{proof}
This is a well-known special case of the Riesz\textendash Thorin interpolation theorem. For proofs of this special case and the general theorem, see \cite[Proposition 7.37]{EinWard} and \cite[Theorem 7.38]{EinWard}, respectively. \ignore{Let $\ds f \in L^p(Y) \cap L^t(Y).$ Note that we have $\ds r\left(\frac{\theta}{p} + \frac{1-\theta}{t} \right) = 1.$ Applying H\" older's inequality with the conjugate exponents $\ds \frac{p}{r\theta}$ and $\ds \frac{t}{r(1-\theta)},$ it follows  
\begin{align*}
    \norm{f}_r^r = \int_{Y} |f|^{r\theta} |f|^{r(1-\theta)} \,d\eta &\leq \left(\int_{Y} \left(|f|^{r\theta}\right)^{\frac{p}{r\theta}} \,d\eta\right)^{\frac{r\theta}{p}} \cdot \left(\int_{Y} \left(|f|^{r(1-\theta)}\right)^{\frac{t}{r(1-\theta)}} \,d\eta\right)^{\frac{r(1-\theta)}{t}} \\
    &= \left( \int_{Y} |f|^p \,d\eta \right)^{\frac{r\theta}{p}} \cdot \left( \int_{Y} |f|^t \,d\eta \right)^{\frac{r(1-\theta)}{t}}.
\end{align*} Hence,
\[ \|f\|_r \leq \left( \int_{Y} |f|^p \,d\eta \right)^{\frac{\theta}{p}} \cdot \left( \int_{Y} |f|^t \,d\eta \right)^{\frac{1-\theta}{t}} = \|f\|_p^\theta  \cdot \|f\|_t^{1-\theta}. \]} 
\end{proof}

\begin{cor}
Suppose that the group $G$ is of $({{\mathcal{P}}}, 1)$-Rogers type and that there exists $s \in  \R_{> 1}$ for which $G$ is of $({{\mathcal{P}}}, s)$-Rogers type. Then for each $r \in (1, s)$ the group $G$ is of $({{\mathcal{P}}}, r)$-Rogers type.
\end{cor}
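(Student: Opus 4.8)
The plan is to deduce this directly from the logarithmic convexity of $L^p$ norms (Proposition \ref{logconvex}) applied to the random variable $\widehat{\mathbbm{1}_E}^{{}_{\mathcal{P}}} - \left(\int_{X}\widehat{\mathbbm{1}_E}^{{}_{\mathcal{P}}} \,d\mu_X\right)\mathbbm{1}_X$ on the probability space $(X, \mu_X)$. Fix a bounded Borel $E \subset \R^n$ with $m(E) > 0$ and abbreviate $F := \widehat{\mathbbm{1}_E}^{{}_{\mathcal{P}}} - \left(\int_{X}\widehat{\mathbbm{1}_E}^{{}_{\mathcal{P}}} \,d\mu_X\right)\mathbbm{1}_X$. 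By hypothesis we have the two bounds $\|F\|_1 \le D_{\mathcal{P},1}\, m(E)$ and $\|F\|_s \le D_{\mathcal{P},s}\, m(E)^{1/s}$.

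Given $r \in (1, s)$, I would choose $\theta \in (0,1)$ so that $\frac1r = \frac{\theta}{1} + \frac{1-\theta}{s}$; explicitly $\theta = \frac{s - r}{r(s-1)} \cdot \frac{1}{1} $ — more cleanly, $\theta$ is determined by $\theta + \frac{1-\theta}{s} = \frac1r$, i.e. $\theta = \frac{1/r - 1/s}{1 - 1/s}$, which indeed lies in $(0,1)$ since $1 < r < s$. Proposition \ref{logconvex} with the exponent pair $(1, s)$ then gives $\|F\|_r \le \|F\|_1^{\theta}\,\|F\|_s^{1-\theta}$. Plugging in the two hypothesized bounds yields
\[
\|F\|_r \le \left(D_{\mathcal{P},1}\, m(E)\right)^{\theta}\left(D_{\mathcal{P},s}\, m(E)^{1/s}\right)^{1-\theta} = D_{\mathcal{P},1}^{\theta} D_{\mathcal{P},s}^{1-\theta}\, m(E)^{\theta + (1-\theta)/s} = D_{\mathcal{P},1}^{\theta} D_{\mathcal{P},s}^{1-\theta}\, m(E)^{1/r}.
\]
Setting $D_{\mathcal{P},r} := D_{\mathcal{P},1}^{\theta} D_{\mathcal{P},s}^{1-\theta}$, which is a positive constant independent of $E$, establishes \equ{rogers} for the exponent $r$, so $G$ is of $(\mathcal{P}, r)$-Rogers type.

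There is essentially no obstacle here: the only small points to check are that $F \in L^1(X,\mu_X) \cap L^s(X,\mu_X)$ so that Proposition \ref{logconvex} applies (this is immediate from the two Rogers-type bounds, since $m(E) < \infty$), and that the chosen $\theta$ genuinely lies in the open interval $(0,1)$, which is a one-line verification from $1 < r < s$. One should also note that $(X,\mu_X)$ is a probability space, though this is not even needed — Proposition \ref{logconvex} is stated for an arbitrary measure space. I would keep the write-up to a few lines, since this corollary is a routine interpolation consequence used to populate the range of admissible Rogers exponents.
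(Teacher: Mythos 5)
Your proposal is correct and is essentially the same argument as the paper's: interpolate between the $(\mathcal{P},1)$- and $(\mathcal{P},s)$-Rogers bounds via Proposition \ref{logconvex} with the same choice of $\theta$ satisfying $\tfrac1r=\theta+\tfrac{1-\theta}{s}$, yielding the constant $D_{\mathcal{P},1}^{\theta}D_{\mathcal{P},s}^{1-\theta}$ and the exponent $m(E)^{1/r}$. No gaps; your added remarks on $\theta\in(0,1)$ and on $F\in L^1\cap L^s$ are routine and consistent with the paper.
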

\begin{proof}
Let $r \in (1, s).$ Fix $\theta \in (0, 1)$ for which $\ds 
\frac1r= \frac{\theta}{1} + \frac{1-\theta}{s}.$ Let $D_1 = D_{ {{\mathcal{P}}}, 1}$ and $D_{s} = D_{ {{\mathcal{P}}}, s}$ be as in Definition \ref{SiegelRogers}. Let $E \subset \R^n$ be a bounded Borel set. The foregoing proposition implies
\begin{align*}
\norm{\widehat{\mathbbm{1}_E}^{{}_{{\mathcal{P}}}} - \left(\int_{X}\widehat{\mathbbm{1}_E}^{{}_{{\mathcal{P}}}} \,d\mu_X \right) \mathbbm{1}_X}_r &\leq \norm{\widehat{\mathbbm{1}_E}^{{}_{{\mathcal{P}}}} - \left(\int_{X}\widehat{\mathbbm{1}_E}^{{}_{{\mathcal{P}}}} \,d\mu_X \right) \mathbbm{1}_X}_1^\theta \cdot \norm{\widehat{\mathbbm{1}_E}^{{}_{{\mathcal{P}}}} - \left(\int_{X}\widehat{\mathbbm{1}_E}^{{}_{{\mathcal{P}}}} \,d\mu_X \right) \mathbbm{1}_X}_{s}^{1-\theta} \\
&\leq D_1^\theta m(E)^{
{\theta}
} \cdot D_{s}^{1-\theta} m(E)^{\frac{1-\theta}{s}} 
= D_1^\theta D_{s}^{1-\theta} m(E)^{{1}/{r}}.
\end{align*}  \end{proof}
In this paper, the examples of $G$ that we shall consider are $\ASL,$ $\SL,$ and also $\Spn$ when $n$ is even. When $G = \ASL,$ it is clear that the only $\Gamma$-invariant subset of $\Z^n$ is $\Z^n$ itself. If $\ds G = \SL$ or $G = \Spn$ (for even $n$ only in the latter case), then $\Gamma$ acts transitively on $\Z_{\textrm{pr}}^n$; in these cases, two obvious choices of ${{\mathcal{P}}}$ are therefore $\ds {{\mathcal{P}}} = \Z_{\textrm{pr}}^n$ and $\ds {{\mathcal{P}}} = {\Z^n_{\ne0}} 
$. We now record the various Siegel and Rogers axioms that the groups just mentioned satisfy.

\smallskip

In the following theorem and thereafter, $\ds \zeta$ denotes the Euler\textendash Riemann zeta function. Let us mention that the following theorem is a compilation of results that are by now standard in the literature.


\begin{thm}\label{SiegelEg}
\begin{itemize}
    \item[ \rm (i)] The group $\ASL$ is of $\Z^n$-Siegel type with $\ds {c}_{ \Z^n} = 1.$
    \item[ \rm (ii)] The group $\SL$ is of $\ds \Z_{\mathrm{pr}}^n$-Siegel type with $\ds {c}_{ \Z_{\mathrm{pr}}^n} = 1/\zeta(n)$ and of ${\Z^n_{\ne0}}$-Siegel type with 
    $\ds {c}_{{\Z^n_{\ne0}}} = 1.$  
    \item[ \rm (iii)] Suppose $n$ is even. Then the group $\mathrm{Sp}_n(\R)$ is of $\ds \Z_{\mathrm{pr}}^n$-Siegel type with $\ds {c}_{ \Z_{\mathrm{pr}}^n} = 1/\zeta(n)$ and of $\Z^n_{\ne0}$-Siegel type with $\ds {c}_{\Z^n_{\ne0}} = 1.$
\end{itemize}
\end{thm}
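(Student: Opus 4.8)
The plan is to treat the three assertions as avatars of a single classical circle of results and to cite the literature rather than re-derive everything from scratch, since — as the preamble to the theorem itself admits — these are by now standard. First I would dispose of part (i): the statement that $\widehat{\mathbbm{1}_E}^{\,\Z^n}$ has integral $m(E)$ over $X = \ASL/\AGL_n(\Z)$ is exactly Siegel's mean value theorem in its affine form, and the constant is $1$ because the affine group, acting with translations, sees $\Z^n$ as a single orbit with no zeta-function correction. For the linear group $\SL$ one has the original Siegel mean value theorem: for $E$ bounded Borel, $\int_X \sum_{\vv\in\Lambda\ssm\{0\}}\mathbbm{1}_E(\vv)\,d\mu_X(\Lambda) = m(E)$, which is the $\Z^n_{\ne0}$ case of (ii) with $c = 1$. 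To pass to the primitive case $\mathcal P = \Z^n_{\mathrm{pr}}$, I would use the standard Möbius-inversion decomposition $\Z^n_{\ne 0} = \bigsqcup_{k\ge 1} k\,\Z^n_{\mathrm{pr}}$: summing $\mathbbm{1}_E$ over all nonzero lattice points equals $\sum_{k\ge 1}$ of the sum of $\mathbbm{1}_E$ over $k$ times the primitive points, i.e. $\widehat{\mathbbm{1}_E}^{\,\Z^n_{\ne0}} = \sum_{k\ge1}\widehat{(\mathbbm{1}_E\circ(\text{scale }k))}^{\,\Z^n_{\mathrm{pr}}}$. Taking $\mu_X$-integrals, using the (already established) $\Z^n_{\mathrm{pr}}$-Siegel identity with unknown constant $c$, and the substitution $m(k^{-1}E) = k^{-n}m(E)$, gives $m(E) = c\,m(E)\sum_{k\ge1}k^{-n} = c\,\zeta(n)\,m(E)$, forcing $c = 1/\zeta(n)$. (Equivalently one takes the known primitive Siegel formula with constant $1/\zeta(n)$ as the input and runs this the other way to recover the non-primitive one; either direction is fine, but I would state the $\Z^n_{\ne0}$ version as the cited input and deduce the primitive one, or vice versa, being careful to cite only one as "known".)

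For part (iii), the symplectic case, the key point is that $\mathrm{Sp}_n(\R)$ (with $n$ even) still acts transitively on $\Z^n_{\mathrm{pr}}$ — this is the classical fact that the symplectic group over $\Z$ acts transitively on primitive vectors, which follows from the transitivity of $\mathrm{SL}_n(\Z)$ on primitive vectors together with the observation that a primitive vector can always be completed to a symplectic basis. Given transitivity, the symplectic mean value theorem of the Siegel type was established by Rogers (and is discussed in, e.g., the references the paper already invokes for Rogers-type estimates); it asserts precisely $\int_X \widehat{\mathbbm{1}_E}^{\,\Z^n_{\mathrm{pr}}}\,d\mu_X = \tfrac1{\zeta(n)}m(E)$, with the same $\zeta(n)$ normalization as in the linear case — this is forced because the density of primitive points inside all nonzero integer points is $1/\zeta(n)$ regardless of which unimodular group one uses, and the symplectic Haar-normalized average of $\widehat{\mathbbm{1}_E}^{\,\Z^n_{\ne0}}$ is again $m(E)$. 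So I would cite the symplectic Siegel formula, then run the identical Möbius-inversion argument as above to relate the $\Z^n_{\mathrm{pr}}$ and $\Z^n_{\ne0}$ constants.

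The only genuine content I would need to be careful about is the normalization of Haar measure: Definition \ref{SiegelRogers} fixes $\mu_G$ so that a fundamental domain for $X$ has measure $1$, hence $\mu_X$ is a probability measure, and the constants $c_{\mathcal P}$ in the literature are stated under exactly this normalization (the right-hand side of \equ{siegel} being $\int_{\R^n} f\,dm$ with no extra covolume factor), so the quoted values transfer verbatim. I would also note, as the remark after the definition already observes, that it suffices to verify \equ{siegel} for bounded compactly supported $E$ and then invoke monotone convergence for the general $L^1$ statement, so no extra approximation argument is needed here. The main obstacle — if one insists on self-containedness rather than citation — would be reproving the symplectic mean value theorem in (iii), since unlike (i) and (ii) it is not literally in Siegel's original paper; but since the paper explicitly positions this theorem as "a compilation of results that are by now standard in the literature," the intended proof is a short paragraph of citations plus the elementary Möbius-inversion passage between the primitive and non-primitive constants, and the completion-of-a-primitive-vector-to-a-symplectic-basis remark to justify transitivity.
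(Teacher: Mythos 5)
Your proposal is correct and matches the paper's argument in all essentials: both dispose of (i) and the $\SL$ case by citation (Athreya for the affine case, Siegel for $\SL$ — the paper in fact cites Siegel's paper directly for \emph{both} the $\Z^n_{\ne0}$ and $\Z^n_{\mathrm{pr}}$ formulas rather than deducing one from the other), both invoke the symplectic primitive mean value formula and then pass to $\Z^n_{\ne0}$ via the decomposition $\widehat{f}^{\,\Z^n_{\ne0}} = \sum_{k\ge1}\widehat{f(k\,\cdot)}^{\,\Z^n_{\mathrm{pr}}}$ with $\int f(k\,\cdot)\,dm = k^{-n}\int f\,dm$, and both finish with monotone convergence to pass from compactly supported to general nonnegative Borel $f$. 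The only slip is attribution: the symplectic Siegel-type formula with constant $1/\zeta(n)$ is due to Kelmer--Yu (cited as \cite[(0.6)]{KY1} in the paper), not Rogers, though your hedge pointing to the references used for the Rogers-type estimates lands on the right source.
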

\begin{proof}
\begin{itemize} 
    \item[ \rm (i)] From Lemma 3 of \cite{Affine} and the ensuing discussion therein, we see that this claim holds with $\ds {c}_{ \Z^n} = 1.$
   
\smallskip
    \item[ \rm (ii)] {By the main theorem in \cite{Siegel} and \cite[(25)]{Siegel}, it follows that for every bounded and compactly supported Riemann integrable function $f : \R^n \to  \R_{\geq 0},$ we have \[ \int_{\R^n}f \, dm = \int_{X}\widehat{f}^{{\,}_{\Z^n_{\ne0}}} \, d\mu_{X}\] and
    \[ \int_{\R^n} f \, dm = \zeta(n)\int_{X}\widehat{f}^{{\,}_{\Z_{\mathrm{pr}}^n}} \,d\mu_{X}. \] The desired results now follow from Lebesgue's Monotone Convergence Theorem.} 
  
\smallskip  
    \item[ \rm (iii)] After making the requisite changes in notation, the assertion that $\mathrm{Sp}_n(\R)$ is of $\ds \Z_{\mathrm{pr}}^n$-Siegel type with  $\ds {c}_{ \Z_{\mathrm{pr}}^n} = 1/\zeta(n)$ is precisely the content of \cite[(0.6)]{KY1}. Let $f: \R^n \to \R_{\geq 0}$ be a compactly supported Borel measurable function. For any $k \in \Z,$ define $f_k : \R^n \to \R_{\geq 0}$ by $f_k(\xx) := f(k\xx).$ Then for any $\Lambda \in X,$
    \[ \widehat{f}^{{\,}_{\Z^n_{\ne0}}}(\Lambda)= \sum_{\vv \in \Lambda \ssm \{\mathbf{0}\}} f(\vv) = \sum_{k=1}^\infty \sum_{\vv \in \Lambda_{\mathrm{pr}}} f(k\vv) = \sum_{k=1}^\infty \sum_{\vv \in \Lambda_{\mathrm{pr}}} f_k(\vv) = \sum_{k=1}^\infty \widehat{f_k}^{{}_{\Z_{\mathrm{pr}}^n}}(\Lambda). \] It is now easy to conclude that $\mathrm{Sp}_n(\R)$ is of $\ds \Z_{\ne0}^n$-Siegel type with   $\ds {c}_{ \Z_{\neq 0}^n} = 1$. \end{itemize} \end{proof}
    
\begin{thm}\label{RogersEg}
\begin{itemize}
\item[ \rm (i)] The group $\ASL$ is of $\left(\Z^n, 2\right)$-Rogers type.
\item[ \rm (ii)] Suppose $n \geq 3.$ Then $\SL$ is of $\ds \left(\Z_{\mathrm{pr}}^n, 2\right)$-Rogers type and of $(\Z^n_{\ne0},2)$-Rogers type.
\item[ \rm (iii)] Suppose $n$ is even and $n \geq 4.$ Then $\mathrm{Sp}_n(\R)$ is of $\ds \left(\Z_{\mathrm{pr}}^n, 2\right)$-Rogers type and of $(\Z^n_{\ne0},2)$-Rogers type. \end{itemize}  \end{thm}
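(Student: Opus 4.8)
The plan is to reduce all three assertions to known second-moment estimates in the relevant spaces of lattices, exactly as in the literature from which Definition \ref{SiegelRogers} was abstracted. Recall that, given that $G$ is of $\mathcal P$-Siegel type (which holds here by Theorem \ref{SiegelEg}), the condition of $(\mathcal P,2)$-Rogers type is — as noted in Remark \ref{siegelremark}(ii) — equivalent to the bound
\[
\int_X \big(\widehat{\mathbbm 1_E}^{\,{}^{\mathcal P}}\big)^2 \, d\mu_X \;\le\; D^2\, m(E) \;+\; \Big(\int_X \widehat{\mathbbm 1_E}^{\,{}^{\mathcal P}} \, d\mu_X\Big)^2 \;=\; D^2\, m(E) \;+\; c_{\mathcal P}^2\, m(E)^2,
\]
valid for all bounded Borel $E\subset\R^n$ with $m(E)>0$. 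So in each case it suffices to produce an estimate of the shape $\int_X \big(\widehat{\mathbbm 1_E}^{\,{}^{\mathcal P}}\big)^2\,d\mu_X \le A\, m(E) + B\, m(E)^2$ with constants $A,B$ depending only on $n$.

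First I would handle case (i), $G=\ASL$. Here $\mathcal P=\Z^n$, and the relevant space $X$ is the space of unimodular \emph{affine} lattices (grids) in $\R^n$. The second moment of the Siegel transform over the space of grids was computed by Rogers and is clean: for an affine lattice $\Lambda=\Lambda_0+\vv$, distinct points of $\Lambda$ are, after integrating over the translation parameter $\vv$, "independent" in a way that makes $\int_X \big(\widehat{\mathbbm 1_E}^{\,{}^{\Z^n}}\big)^2\,d\mu_X = m(E) + m(E)^2$ for every $n\ge 2$ (this is the affine analogue of Rogers' formula and is exactly the content of \cite{Affine}; it is simpler than the linear case precisely because there are no lower-dimensional sublattice contributions). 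This gives $(\Z^n,2)$-Rogers type with an explicit $D$ and imposes no dimensional restriction, consistent with the statement.

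Next, cases (ii) and (iii): $G=\SL$ with $n\ge 3$, and $G=\Spn$ with $n$ even and $n\ge 4$. For $\SL$, Rogers' second moment formula for $\widehat{\mathbbm 1_E}$ over $\SL/\mathrm{SL}_n(\Z)$ states that $\int_X \big(\widehat{\mathbbm 1_E}^{\,{}^{\Z^n_{\ne 0}}}\big)^2\,d\mu_X$ equals $m(E)^2$ plus a sum of terms indexed by pairs $(p,q)$ of coprime integers, each of which is bounded by a dimensional constant times $m(E)$, the whole correction series converging precisely when $n\ge 3$; restricting to primitive vectors only removes some of these terms, so the same bound holds for $\widehat{\mathbbm 1_E}^{\,{}^{\Z^n_{\mathrm{pr}}}}$. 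This yields $\int_X \big(\widehat{\mathbbm 1_E}^{\,{}^{\mathcal P}}\big)^2\,d\mu_X \le C_n\, m(E) + m(E)^2$ for $\mathcal P\in\{\Z^n_{\ne 0},\Z^n_{\mathrm{pr}}\}$, hence $(\mathcal P,2)$-Rogers type. For $\Spn$ the analogous second-moment estimate is the one established by Kelmer and Yu; I would cite \cite{KY1} for the primitive Siegel transform, obtain the bound with $\mathcal P=\Z^n_{\mathrm{pr}}$, and then pass to $\mathcal P=\Z^n_{\ne 0}$ either by the same scaling/summation device used in the proof of Theorem \ref{SiegelEg}(iii) — writing $\widehat{\mathbbm 1_E}^{\,{}^{\Z^n_{\ne 0}}}=\sum_{k\ge 1}\widehat{(\mathbbm 1_E)_k}^{\,{}^{\Z^n_{\mathrm{pr}}}}$ and applying Minkowski's inequality in $L^2(X)$ together with $m\big((\mathbbm 1_E)_k\big)$-type scaling — or directly from the symplectic second-moment formula. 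The restriction $n\ge 4$ (and evenness) enters exactly as the convergence threshold of the corresponding correction series, mirroring $n\ge 3$ in the $\SL$ case.

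The main obstacle is not any new mathematics but rather bookkeeping: making sure the cited second-moment formulas, which in the original sources are phrased for indicator functions of nice sets and for the classical (non-primitive, non-affine) Siegel transform, are transcribed into the exact inequality \equ{rogers} with the normalization of $\mu_X$ fixed here, and in particular that the primitive-versus-all and affine-versus-linear variants are matched with the correct constants $c_{\mathcal P}$ from Theorem \ref{SiegelEg}. The $L^2$-triangle-inequality step needed to deduce the $\Z^n_{\ne 0}$ bound for $\Spn$ from the primitive one is the only place requiring a short genuine estimate, and it is routine since $\sum_k m\big((\mathbbm 1_E)_k\big)^{1/2} = \sum_k k^{-n/2} m(E)^{1/2}$ converges for $n\ge 3$.
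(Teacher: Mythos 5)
Your proposal is correct and, for the most part, follows the same route as the paper: the theorem is a compilation of known second-moment estimates, and you cite the right sources \textemdash\ the affine identity $\|\widehat{\mathbbm{1}_E}^{{}_{\Z^n}}-m(E)\mathbbm{1}_X\|_2=m(E)^{1/2}$ from \cite{Affine} for (i), Rogers-type formulas for (ii), and the Kelmer\textendash Yu symplectic second moment \cite{KY1} for (iii). The one place where you genuinely diverge is the treatment of $\mathcal{P}=\Z^n_{\ne0}$: the paper handles $\SL$ by comparing $E$ with a ball of equal volume via \cite[Theorem 1 and Lemma 1]{RogSet} and then bounding the coprime correction sum as in \cite[Theorem 2.2]{Log}, and handles $\Spn$ by quoting \cite[(0.11)]{KY1} directly, whereas you propose deducing the $\Z^n_{\ne0}$ bound from the primitive one by writing $\widehat{\mathbbm{1}_E}^{{}_{\Z^n_{\ne0}}}-m(E)\mathbbm{1}_X=\sum_{k\ge1}\bigl(\widehat{\mathbbm{1}_{E_k}}^{{}_{\Z^n_{\rm pr}}}-\tfrac{m(E_k)}{\zeta(n)}\mathbbm{1}_X\bigr)$ with $m(E_k)=k^{-n}m(E)$ and applying the $L^2$ triangle inequality, using $\sum_k k^{-n/2}<\infty$ for $n\ge3$. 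That argument is valid and is in fact the $r=2$ instance of the paper's Lemma \ref{interpolation} (used there only for $r\in(1,2)$ and $\mathrm{SL}_2$); it buys a uniform derivation of the $\Z^n_{\ne0}$ cases from the primitive ones in both (ii) and (iii), at the cost of a slightly worse constant, while the paper's ball-comparison route stays closer to the classical Rogers computation. One caution: your justification of the primitive bound for $\SL$ as ``restricting to primitive vectors only removes some of these terms'' is loose \textemdash\ the primitive second-moment formula has a different main term $m(E)^2/\zeta(n)^2$ (the mean is $m(E)/\zeta(n)$), so it is not literally a subsum of the non-primitive expansion; one should cite the primitive formula itself (e.g.\ \cite[(0.2)]{KY1}, or Rogers/Schmidt), exactly as the paper does, after which your bound $\|\widehat{\mathbbm{1}_E}^{{}_{\Z^n_{\rm pr}}}-\tfrac{m(E)}{\zeta(n)}\mathbbm{1}_X\|_2\le\sqrt{2/\zeta(n)}\,m(E)^{1/2}$ follows. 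With that citation made precise, your argument is complete.
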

\begin{proof}
\begin{itemize}
    \item[ \rm (i)] The result \cite[Lemma 4]{Affine} shows that for any bounded Borel $E \subset \R^n,$ we have \[ \norm{\widehat{\mathbbm{1}_E}^{{}_{\Z^n}} - \left(\int_{X}\widehat{\mathbbm{1}_E}^{{}_{\Z^n}} \,d\mu_X \right) \mathbbm{1}_X}_2 = m(E)^{{1}/{2}}. \] 
    
    \item[ \rm (ii)] Let $E \subset \R^n$ be bounded and Borel. Since $\ds \int_{X}\widehat{\mathbbm{1}_E}^{{}_{\Z^n_{\mathrm{pr}}}} \,d\mu_{X} = \frac{1}{\zeta(n)}m(E)$,
    a simple change of notation and a routine algebraic manipulation of
    \cite[(0.2)]{KY1} yield \[ \norm{\widehat{\mathbbm{1}_E}^{{}_{\Z^n_{\mathrm{pr}}}} - \frac{m(E)}{\zeta(n)}\mathbbm{1}_X}_2 \leq \sqrt{\frac{2}{\zeta(n)}} \,m(E)^
    {{1}/{2}}. \] Hence, $\SL$ is of $(\Z_\mathrm{pr}^n, 2)$-Rogers type. 
    
    \smallskip
    
    Let $B$ denote the closed Euclidean ball in $\R^n$ that is centered at the origin and whose measure is equal to $m(E).$ By \cite[Theorem 1 and Lemma 1]{RogSet}, it follows \[ \norm{\widehat{\mathbbm{1}_E}^{{}_{\Z^n_{\ne0}}}}_2^2 \leq \norm{\widehat{\mathbbm{1}_B}^{{}_{\Z^n_{\ne0}}}}_2^2 \leq m(E)^2 + \sum_{k, q \in \Z_{\neq 0} : \gcd(k, q) = 1} \int_{\R^n} \mathbbm{1}_B(k\xx) \,  \mathbbm{1}_B(q\xx) \, dm(\xx). \] As in the proof of \cite[Theorem 2.2]{Log}, we have \[ \sum_{k, q \in \Z_{\neq 0} : \gcd(k, q) = 1} \int_{\R^n} \mathbbm{1}_B(k\xx) \,  \mathbbm{1}_B(q\xx) \, dm(\xx) \leq 8 \frac{\zeta(n-1)}{\zeta(n)} m(E).  \] Hence, $\SL$ is of $(\Z^n_{\ne0}, 2)$-Rogers type. 
      \item[ \rm (iii)]
      Let $E \subset \R^n$ be bounded and Borel. Since $\ds \int_{X}\widehat{\mathbbm{1}_E}^{{}_{\Z_{\textrm{pr}}^n}} \,d\mu_X = \frac{1}{\zeta(n)}m(E),$ a simple change of notation and a routine rearrangement of \cite[(0.10)]{KY1} yield \[ \norm{\widehat{\mathbbm{1}_E}^{{}_{\Z_{\textrm{pr}}^n}} - \left(\int_{X}\widehat{\mathbbm{1}_E}^{{}_{\Z_{\textrm{pr}}^n}} \,d\mu_X \right) \mathbbm{1}_X}_2 \leq \frac{2}{\sqrt{\zeta(n)}}\,m(E)^{{1}/{2}}. \]  
    Since $\ds \int_{X}\widehat{\mathbbm{1}_E}^{{}_{\Z^n_{\ne0}}} \,d\mu_X = m(E),$ a simple change of notation and a routine rearrangement of \cite[(0.11)]{KY1} yield \[ \norm{\widehat{\mathbbm{1}_E}^{{}_{{}_{\Z^n_{\ne0}}}} - \left(\int_{X}\widehat{\mathbbm{1}_E}^{{}_{\Z^n_{\ne0}}} \,d\mu_X \right) \mathbbm{1}_X}_2 \leq \frac{2 \zeta\left(\frac{n}{2}\right)}{\sqrt{\zeta(n)}}\,m(E)^{{1}/{2}}. \] \end{itemize} \end{proof} Before handling the case of $\mathrm{SL}_2(\R),$ we first prove an interpolation result that we shall have to use.
\begin{lem}\label{interpolation}
Let $G$ be a closed subgroup of $\SL,$ and let $\Gamma$ be as in \equ{gamma}. Suppose further that $G$ is of $\ds \Z_{\mathrm{pr}}^n$-Siegel type with $\ds {c}_{ \Z_{\mathrm{pr}}^n} = 1/\zeta(n)$ and of $\ds (\Z_{\mathrm{pr}}^n, 2)$-Rogers type. For each $r \in (1, 2)$ it then follows that $G$ is of $\ds \left(\Z^n_{\ne 0}, r\right)$-Rogers type.
\end{lem}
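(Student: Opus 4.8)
The strategy is to reduce the assertion for $\Z^n_{\ne 0}$ to the hypothesis for $\Z^n_{\mathrm{pr}}$ by exploiting the decomposition of the nonzero integer points into integer dilates of the primitive ones. First I would record the set-theoretic identity $\Z^n_{\ne 0} = \bigsqcup_{k=1}^\infty k\,\Z^n_{\mathrm{pr}}$; since this decomposition is $\Gamma$-invariant, every $\Lambda \in X$ satisfies $\Lambda_{\Z^n_{\ne 0}} = \bigsqcup_{k=1}^\infty k\,\Lambda_{\mathrm{pr}}$. Writing $\tfrac1k E := \{\xx \in \R^n : k\xx \in E\}$, a bounded Borel set with $m(\tfrac1k E) = k^{-n} m(E)$, this yields the pointwise identity on $X$ (valued in $[0,\infty]$)
\[ \widehat{\mathbbm{1}_E}^{\Z^n_{\ne 0}} \;=\; \sum_{k=1}^\infty \widehat{\mathbbm{1}_{\frac1k E}}^{\Z^n_{\mathrm{pr}}}. \]
By Lebesgue's Monotone Convergence Theorem the means decompose as well, and invoking $\Z^n_{\mathrm{pr}}$-Siegel type with constant $1/\zeta(n)$ one gets $\int_X \widehat{\mathbbm{1}_E}^{\Z^n_{\ne 0}}\,d\mu_X = \tfrac1{\zeta(n)} m(E)\sum_k k^{-n} = m(E) < \infty$; in particular $\widehat{\mathbbm{1}_E}^{\Z^n_{\ne 0}}$ is finite $\mu_X$-almost everywhere.

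Next I would upgrade the primitive Rogers estimate from exponent $2$ to all exponents $r \in (1,2)$. By Remark~\ref{siegelremark}(iii), $\Z^n_{\mathrm{pr}}$-Siegel type implies $(\Z^n_{\mathrm{pr}},1)$-Rogers type; combined with the hypothesis of $(\Z^n_{\mathrm{pr}},2)$-Rogers type, the Corollary to Proposition~\ref{logconvex} gives that $G$ is of $(\Z^n_{\mathrm{pr}},r)$-Rogers type for every $r\in(1,2)$, say with constant $D' = D_{\Z^n_{\mathrm{pr}},r}$. Applying this to the set $\tfrac1k E$ for each $k$ yields
\[ \Big\| \widehat{\mathbbm{1}_{\frac1k E}}^{\Z^n_{\mathrm{pr}}} - \Big(\int_X \widehat{\mathbbm{1}_{\frac1k E}}^{\Z^n_{\mathrm{pr}}}\,d\mu_X\Big)\mathbbm{1}_X \Big\|_r \;\le\; D'\, m(\tfrac1k E)^{1/r} \;=\; D'\, k^{-n/r}\, m(E)^{1/r}. \]

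Finally I would sum over $k$. Subtracting the decomposed mean from the decomposed transform, the centered $\Z^n_{\ne 0}$-transform equals $\sum_{k=1}^\infty \big( \widehat{\mathbbm{1}_{\frac1k E}}^{\Z^n_{\mathrm{pr}}} - (\text{mean}_k)\mathbbm{1}_X \big)$; the $L^r$-norms of the summands are summable, with sum $D'\,m(E)^{1/r}\sum_k k^{-n/r}$, so this series converges absolutely in the Banach space $L^r(X,\mu_X)$, and comparing pointwise a.e. limits (using the a.e.-finiteness established above and the finiteness of the subtracted constant) identifies its $L^r$-sum with the centered $\Z^n_{\ne 0}$-transform. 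Hence
\[ \Big\| \widehat{\mathbbm{1}_E}^{\Z^n_{\ne 0}} - \Big(\int_X \widehat{\mathbbm{1}_E}^{\Z^n_{\ne 0}}\,d\mu_X\Big)\mathbbm{1}_X \Big\|_r \;\le\; D'\,\zeta(n/r)\, m(E)^{1/r}, \]
and since $n \ge 2$ and $r < 2$ we have $n/r > 1$, so $\zeta(n/r) < \infty$ and we may take $D_{\Z^n_{\ne 0},r} := D'\,\zeta(n/r)$. The only delicate point in this plan is the interchange of the infinite sum with the $L^r$-norm; it is handled by the absolute convergence just noted together with the a.e.-finiteness coming from the Siegel-type mean computation. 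Everything else is routine, and the hypothesis $r < 2$ enters precisely to make the series $\sum_k k^{-n/r}$ converge.
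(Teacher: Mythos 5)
Your proposal is correct and follows essentially the same route as the paper: decompose $\Z^n_{\ne 0}$ into the dilates $k\,\Z^n_{\mathrm{pr}}$ so that $\widehat{\mathbbm{1}_E}^{\Z^n_{\ne0}}=\sum_k\widehat{\mathbbm{1}_{E_k}}^{\Z^n_{\mathrm{pr}}}$ with $E_k=\tfrac1k E$, upgrade the primitive estimate to exponent $r\in(1,2)$ by interpolating between the $L^1$ bound coming from Siegel type and the assumed $L^2$ bound, and then sum the resulting bounds $D'k^{-n/r}m(E)^{1/r}$ using $n/r>1$. The only differences are cosmetic: you invoke the stated interpolation corollary (via Remark \ref{siegelremark}(iii)) where the paper redoes the log-convexity computation inline, and you spell out the justification for exchanging the infinite sum with the $L^r$-norm, which the paper applies tacitly.
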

\begin{proof} 
Arguing as in (iii) of Theorem \ref{SiegelEg}, we conclude that $G$ is of $\Z^n_{\ne0}$-Siegel type with $\ds {c}_{\Z^n_{\ne0}} = 1.$ Let $\ds D = D_{ \Z_{\mathrm{pr}}^n, 2} \in \R_{>0}$ be as in Definition \ref{SiegelRogers}. Let $A \subset \R^n$ be bounded and Borel.
Then \[ \norm{\widehat{\mathbbm{1}_A}^{{}_{\Z_{\textrm{pr}}^n}} - \frac{m(A)}{\zeta(n)} \mathbbm{1}_X}_2 \leq D \, m(A) ^{{1}/{2}}. \] Since $G$ is of $\ds \Z_{\mathrm{pr}}^n$-Siegel type with $\ds {c}_{ \Z_{\mathrm{pr}}^n} = 1/\zeta(n),$ we have \[ \norm{\widehat{\mathbbm{1}_A}^{{}_{\Z_{\textrm{pr}}^n}} - \frac{m(A)}{\zeta(n)} \mathbbm{1}_X}_1 \leq \frac{2m(A)}{\zeta(n)}. \] Let $r \in (1, 2)$ be given. Set $\ds \theta := \frac{2}{r} - 1$; then $\theta \in (0, 1)$ and $\ds r = \left( \frac{\theta}{1} + \frac{1-\theta}{2} \right)^{-1}.$ By the logarithmic convexity of the $L^p$ norms, one has
\begin{align*}
    \norm{\widehat{\mathbbm{1}_A}^{{}_{\Z_{\textrm{pr}}^n}} - \frac{m(A)}{\zeta(n)} \mathbbm{1}_X}_{r} &\leq \norm{\widehat{\mathbbm{1}_A}^{{}_{\Z_{\textrm{pr}}^n}} - \frac{m(A)}{\zeta(n)} \mathbbm{1}_X}_1^\theta \cdot \norm{\widehat{\mathbbm{1}_A}^{{}_{\Z_{\textrm{pr}}^n}} - \frac{m(A)}{\zeta(n)} \mathbbm{1}_X}_2^{1-\theta} \\
    &\leq \left(\frac{2}{\zeta(n)}\right)^\theta m(A)^\theta \cdot D^{1-\theta} m(A)^\frac{1-\theta}{2} 
    = \frac{2^\theta D^{1-\theta}}{\zeta(n)^\theta} \,m(A)^{{1}/{r}}.  
\end{align*}
Now let $E \subset \R^n$ be bounded and Borel. For each $k \in \Z_{>0},$ let $$\ds E_k := \{ \xx \in \R^n : k \xx \in E \}.$$ We then have
\begin{align*}
    \norm{\widehat{\mathbbm{1}_E}^{{}_{\Z^n_{\ne0}}} - m(E) \mathbbm{1}_X }_r &= \norm{ \sum_{k=1}^\infty \left(\widehat{\mathbbm{1}_{E_k}}^{{}_{\Z_{\textrm{pr}}^n}} - \frac{m(E_k)}{\zeta(n)} \mathbbm{1}_X \right)}_{r} \leq \sum_{k=1}^\infty \norm{ \widehat{\mathbbm{1}_{E_k}}^{{}_{\Z_{\textrm{pr}}^n}} - \frac{m(E_k)}{\zeta(n)} \mathbbm{1}_X }_{r} \\
    &\leq \sum_{k=1}^\infty \frac{2^\theta D^{1-\theta}}{\zeta(n)^\theta} m(E_k)^{{1}/{r}}
    = \left( \frac{2^\theta D^{1-\theta}}{\zeta(n)^\theta} \sum_{k=1}^\infty k^{- {n}/{r}} \right) m(E)^{{1}/{r}}.
\end{align*}
Since $\ds 1 < r < 2 \leq n,$ it follows that $\ds D_{\Z^n_{\ne0}, r} :=   \left( \frac{2^\theta \left(D_{ \Z_{\mathrm{pr}}^n, 2}\right)^{1-\theta}}{\zeta(n)^\theta} \sum_{k=1}^\infty k^{- {n}/{r}}\right) < \infty.$  \end{proof}  

\begin{thm}\label{SL2}
The group $\mathrm{SL}_2(\R)$ is of $\ds \left(\Z_{\mathrm{pr}}^n, 2\right)$-Rogers type; for each $\ds r \in (1, 2)$ the group $\mathrm{SL}_2(\R)$ is of $\left(\Z^n_{\ne0}, r \right)$-Rogers type.
\end{thm}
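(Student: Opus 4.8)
The second assertion follows from the first by interpolation. Applying Lemma~\ref{interpolation} with $G=\SLtwo$ and $n=2$, its two hypotheses are that $\SLtwo$ be of $\Z^2_{\mathrm{pr}}$-Siegel type with constant $1/\zeta(2)$ — which is the $n=2$ instance of Theorem~\ref{SiegelEg}(ii) — and of $(\Z^2_{\mathrm{pr}},2)$-Rogers type, which is precisely the first assertion. The conclusion of the lemma is $(\Z^2_{\ne0},r)$-Rogers type for every $r\in(1,2)$; note that the convergence of the series $\sum_{k\ge1}k^{-n/r}$ invoked at the end of that proof requires $n/r>1$, which for $n=2$ is exactly the restriction $r<2$. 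This is also the structural reason why, in contrast with the cases $n\ge3$ in Theorem~\ref{RogersEg}, the value $r=2$ must be excluded here for $\Z^2_{\ne0}$: the non-primitive (collinear) lattice points contribute a factor comparable to $\sum_k k^{-1}$, which diverges.

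It therefore remains to prove that $\SLtwo$ is of $(\Z^2_{\mathrm{pr}},2)$-Rogers type, i.e.\ that $\norm{\widehat{\mathbbm{1}_E}^{\,\Z^2_{\mathrm{pr}}}-\tfrac{m(E)}{\zeta(2)}\mathbbm{1}_X}_2\le D\,m(E)^{1/2}$ for every bounded Borel $E\subset\R^2$; equivalently, that the second moment of the primitive Siegel transform equals $\bigl(m(E)/\zeta(2)\bigr)^2+O\bigl(m(E)\bigr)$. The most economical route is to invoke a second-moment estimate for primitive Siegel transforms on the space of unimodular lattices in the plane: the bound \[ \norm{\widehat{\mathbbm{1}_E}^{\,\Z^n_{\mathrm{pr}}}-\tfrac{m(E)}{\zeta(n)}\mathbbm{1}_X}_2\le C_n\,m(E)^{1/2} \] is valid already for $n=2$, the hypothesis $n\ge3$ in Theorem~\ref{RogersEg}(ii) being needed only for the subsequent passage to $\Z^n_{\ne0}$ (which forces $\zeta(n-1)<\infty$); this follows from \cite{KY1} and, in any case, from the classical second-moment formulas of Rogers~\cite{RogSet} and Schmidt~\cite{Schmidt}.

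If instead one wants a self-contained argument, the plan is to unfold directly. Expanding the square, $\int_X\bigl(\widehat{\mathbbm{1}_E}^{\,\Z^2_{\mathrm{pr}}}\bigr)^2\,d\mu_X=\int_X\sum_{\vv,\ww\in\Lambda_{\mathrm{pr}}}\mathbbm{1}_E(\vv)\mathbbm{1}_E(\ww)\,d\mu_X$, and split the pairs $(\vv,\ww)$ according to whether $\ww=\pm\vv$ or $\vv,\ww$ are linearly independent. The diagonal terms contribute at most $2m(E)/\zeta(2)$, by Theorem~\ref{SiegelEg}(ii) applied to $\mathbbm{1}_E$ and to $\mathbbm{1}_E\cdot\mathbbm{1}_{-E}$. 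For the linearly independent pairs one uses that $\mathrm{SL}_2(\Z)$ acts transitively on $\Z^2_{\mathrm{pr}}$ with the stabilizer of $e_1$ the (infinite cyclic) group of integer upper unitriangular matrices, so that the $\mathrm{SL}_2(\Z)$-orbits of ordered linearly independent pairs of primitive vectors are indexed by the absolute determinant $q=|\det(\vv,\ww)|\in\Z_{\ge1}$, a sign, and a reduced residue class modulo $q$; unfolding then converts the sum--integral over $X$ into a sum over these orbits of ordinary integrals over $\R^2$.

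The crux — and the step I expect to be the main obstacle — is to evaluate this sum: one must extract the main term $\bigl(m(E)/\zeta(2)\bigr)^2$ and show that the remainder is $O\bigl(m(E)\bigr)$ uniformly in $E$. Crude size estimates only give a bound of order $(\text{circumradius of }E)^2\cdot m(E)$, which is of the same order as the main term itself, so one genuinely needs cancellation: a Cauchy--Crofton-type identity for the chord-length integrals, combined with the summation over $q$ weighted by $\varphi(q)/q^2$. This is the delicate point special to $n=2$, and it is precisely here that the absence of collinear pairs $\ww=k\vv$ with $|k|\ge2$ — which for the full transform $\widehat{\mathbbm{1}_E}^{\,\Z^2_{\ne0}}$ would contribute a divergent factor $\sum_k k^{-1}$ — is what makes the primitive second moment finite and yields the required $O\bigl(m(E)\bigr)$ error.
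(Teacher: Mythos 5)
The second half of your argument is exactly the paper's: the $\left(\Z^2_{\ne0},r\right)$-Rogers property for $r\in(1,2)$ is deduced from the primitive case via Lemma \ref{interpolation}, whose hypotheses are supplied by Theorem \ref{SiegelEg}(ii) with $n=2$ and by the first assertion, and the restriction $r<2$ is indeed forced by the convergence of $\sum_k k^{-n/r}$ with $n=2$.

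The gap is in the first assertion. Your "economical route" rests on the claim that the bound $\norm{\widehat{\mathbbm{1}_E}^{{}_{\Z_{\textrm{pr}}^n}} - \frac{m(E)}{\zeta(n)}\mathbbm{1}_X}_2 \le C_n\, m(E)^{1/2}$ is "valid already for $n=2$," with the hypothesis $n\ge3$ in Theorem \ref{RogersEg}(ii) needed only for the passage to $\Z^n_{\ne0}$, and that the $n=2$ case "follows from \cite{KY1} and, in any case, from the classical second-moment formulas of Rogers \cite{RogSet} and Schmidt \cite{Schmidt}." That is not so: those second-moment formulas (including \cite[(0.2)]{KY1}, the estimate quoted in Theorem \ref{RogersEg}(ii)) are established only for $n\ge3$, for the primitive transform as well, and the obstruction is not merely the finiteness of $\zeta(n-1)$. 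In dimension two the $\SLtwo$-orbit of a linearly independent pair $(\vv,\ww)$ is the three-dimensional hypersurface $\{|\det(\vv,\ww)|=q\}$ inside $\R^2\times\R^2$, so unfolding the cross term yields a sum over $q$ of \emph{surface} integrals weighted by $\varphi(q)$ rather than the clean product $\left(m(E)/\zeta(2)\right)^2$, and extracting the main term with an $O\big(m(E)\big)$ error needs a genuinely separate dimension-two argument. You identify this yourself as "the crux" of your self-contained plan and then leave it unproven, so the first assertion is not actually established. The paper closes precisely this point by quoting the special $n=2$ estimate of Athreya--Margulis, \cite[(4.4)]{Log}, which gives $\norm{\widehat{\mathbbm{1}_E}^{{}_{\Z_{\textrm{pr}}^2}} - \frac{m(E)}{\zeta(2)}\mathbbm{1}_X}_2 \le 4\, m(E)^{1/2}$ for every bounded Borel $E$ of sufficiently large measure, and then absorbs the small-measure sets by enlarging the constant $D_{\Z^2_{\rm pr},2}$; if you want a citation-based proof, that is the reference to invoke, not Rogers, Schmidt, or \cite{KY1}.
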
  
\begin{proof}
If $E$ is any bounded Borel subset of $\R^n$ that has sufficiently large volume, then \cite[(4.4)]{Log} yields \[ \norm{\widehat{\mathbbm{1}_E}^{{}_{\Z_{\textrm{pr}}^n}} - \frac{m(E)}{\zeta(2)}\mathbbm{1}_X}_2 \leq 4 m(E)^{{1}/{2}}. \] This implies the first assertion by choosing the constant $\ds D_{\Z_{\textrm{pr}}^n, 2}$ of Definition \ref{SiegelRogers} (ii) to be sufficiently large. The second assertion now follows at once from Lemma \ref{interpolation}. \end{proof}  


Now that we have considered some examples of groups that satisfy the Siegel and Rogers axioms, let us state and prove the first results that make these axioms worthwhile.               


\begin{thm}\label{genericcounting}
Let $G$ be a closed subgroup of $\ASL$, let $\Gamma$ be as in \equ{gamma}, and let ${{\mathcal{P}}}$ be a $\Gamma$-invariant subset of $\Z^n.$ Suppose $G$ is of $\mathcal{P}$-Siegel type with $c=c_{\mathcal{P}}$. Let $E$ be a Borel measurable subset of $\R^n.$ 
\smallskip  
\begin{itemize}
\item[\rm (i)] If $m(E) < \infty,$ then $\ds \mu_X\left(\left\lbrace \Lambda \in X: \mathrm{card}\left(\Lambda_{{\mathcal{P}}} \cap E\right) < \infty \right\rbrace\right) = 1.$   
\end{itemize}
\smallskip
For the remaining statements of this theorem, suppose in addition to the preceding hypotheses that we are given $r \in \R_{> 1}$ for which $G$ is of $\ds \left(\mathcal{P}, r \right)$-Rogers type. 
\smallskip
\begin{itemize}
\item[\rm (ii)] Suppose $m(E) = \infty.$ Let $\norm{\cdot}$ be a norm on $\R^n$ and for each $t \in \R_{>0},$ set $$\ds E_t := \{\xx \in E : \norm{\xx} \leq t\}.$$ Then 
for $\mu_X$-almost every $\Lambda \in X,$ one has $\ds \lim_{t \to \infty } \frac{\card\left(\Lambda_{\mathcal{P}} \cap E_{t}\right)}{c\, m\left(E_{t}\right)} = 1.$ In particular, $\ds \mu_X\left(\left\lbrace \Lambda \in X: \mathrm{card}\left(\Lambda_{{\mathcal{P}}} \cap E\right) = \infty \right\rbrace\right) = 1.$   
\smallskip  
\item[\rm (iii)] Let $\{F_k\}_{k \in \Z_{\geq 1}}$ be 
Borel measurable subsets of $\R^n$ with $0 < m(F_k) < \infty$ for each $k \in \Z_{\geq 1}$. Suppose $\ds \sum_{k=1}^{\infty} m(F_k)^{1-r} < \infty.$ Then the following holds: for $\mu_X$-almost every $\Lambda \in X,$ there exists some $k_\Lambda \in \Z_{\geq 1}$ such that for each integer $k \geq k_\Lambda,$ we have $\ds \Lambda_{\mathcal{P}} \cap F_k \neq \varnothing.$ 
\end{itemize}           
\end{thm}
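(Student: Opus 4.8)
The plan is to deduce all three assertions from the Siegel-type identity \equ{siegel} together with the $L^r$-deviation bound \equ{rogers}, applied to the Siegel transforms $\widehat{\mathbbm{1}_A}^{\mathcal{P}}$, by means of Chebyshev's inequality and the Borel--Cantelli lemma; this is a direct generalization of the method of \cite{Schmidt}. Throughout I would use the tautology $\widehat{\mathbbm{1}_A}^{\mathcal{P}}(\Lambda) = \card(\Lambda_{\mathcal{P}} \cap A)$ for Borel $A \subseteq \R^n$, together with the extensions of \equ{siegel} and \equ{rogers} to arbitrary Borel sets of finite measure noted in Remark \ref{siegelremark}(i). For (i): since $m(E) < \infty$, the Siegel identity gives $\int_X \widehat{\mathbbm{1}_E}^{\mathcal{P}}\, d\mu_X = c\, m(E) < \infty$, so $\widehat{\mathbbm{1}_E}^{\mathcal{P}} \in L^1(X,\mu_X)$; in particular $\card(\Lambda_{\mathcal{P}} \cap E) = \widehat{\mathbbm{1}_E}^{\mathcal{P}}(\Lambda) < \infty$ for $\mu_X$-almost every $\Lambda$.

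For (iii), I would argue by Borel--Cantelli. For each $k \in \Z_{\geq 1}$, on the event $\{\Lambda : \Lambda_{\mathcal{P}} \cap F_k = \varnothing\}$ one has $|\widehat{\mathbbm{1}_{F_k}}^{\mathcal{P}}(\Lambda) - c\, m(F_k)| = c\, m(F_k)$; hence Chebyshev's inequality applied to the $L^r$ norm, combined with $(\mathcal{P},r)$-Rogers type and the Siegel identity $\int_X \widehat{\mathbbm{1}_{F_k}}^{\mathcal{P}}\, d\mu_X = c\, m(F_k)$, yields
\[ \mu_X\bigl(\{\Lambda : \Lambda_{\mathcal{P}} \cap F_k = \varnothing\}\bigr) \leq (c\, m(F_k))^{-r}\, \bigl\|\widehat{\mathbbm{1}_{F_k}}^{\mathcal{P}} - c\, m(F_k)\mathbbm{1}_X\bigr\|_r^r \leq (D/c)^r\, m(F_k)^{1-r}. \]
Since $\sum_{k} m(F_k)^{1-r} < \infty$ by hypothesis, the Borel--Cantelli lemma shows that for $\mu_X$-almost every $\Lambda$ the event $\Lambda_{\mathcal{P}} \cap F_k = \varnothing$ occurs for only finitely many $k$, i.e.\ there is $k_\Lambda$ with $\Lambda_{\mathcal{P}} \cap F_k \neq \varnothing$ for all $k \geq k_\Lambda$.

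For (ii), I would run a law-of-large-numbers argument along a carefully chosen subsequence and then interpolate. Note that $t \mapsto m(E_t)$ is nondecreasing, continuous (each sphere is Lebesgue-null), and tends to $m(E) = \infty$; setting $\beta := 2/(r-1) > 0$, choose $t_k \uparrow \infty$ with $m(E_{t_k}) = k^{\beta}$. For fixed $\varepsilon > 0$, Chebyshev plus $(\mathcal{P},r)$-Rogers type give
\[ \mu_X\bigl(\{\Lambda : |\widehat{\mathbbm{1}_{E_{t_k}}}^{\mathcal{P}}(\Lambda) - c\, m(E_{t_k})| > \varepsilon\, c\, m(E_{t_k})\}\bigr) \leq (D/(\varepsilon c))^r\, m(E_{t_k})^{1-r} = (D/(\varepsilon c))^r\, k^{-2}, \]
so Borel--Cantelli yields $\card(\Lambda_{\mathcal{P}} \cap E_{t_k})/(c\, m(E_{t_k})) \to 1$ for $\mu_X$-almost every $\Lambda$. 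To pass to the full limit, for $t_k \leq t \leq t_{k+1}$ monotonicity of $A \mapsto \card(\Lambda_{\mathcal{P}} \cap A)$ and of $m(\cdot)$ gives
\[ \frac{m(E_{t_k})}{m(E_{t_{k+1}})}\cdot\frac{\card(\Lambda_{\mathcal{P}} \cap E_{t_k})}{c\, m(E_{t_k})} \;\leq\; \frac{\card(\Lambda_{\mathcal{P}} \cap E_t)}{c\, m(E_t)} \;\leq\; \frac{m(E_{t_{k+1}})}{m(E_{t_k})}\cdot\frac{\card(\Lambda_{\mathcal{P}} \cap E_{t_{k+1}})}{c\, m(E_{t_{k+1}})}, \]
and $m(E_{t_{k+1}})/m(E_{t_k}) = ((k+1)/k)^{\beta} \to 1$. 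Intersecting the exceptional sets over $\varepsilon = 1/j$, $j \in \Z_{\geq 1}$, gives the asserted almost-sure limit; the "in particular" statement follows because $\card(\Lambda_{\mathcal{P}} \cap E) = \lim_{t \to \infty} \card(\Lambda_{\mathcal{P}} \cap E_t) = \infty$.

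I expect the only genuinely delicate point to be the choice of sampling times $t_k$ in part (ii): they must be selected so that simultaneously the Borel--Cantelli series $\sum_k m(E_{t_k})^{1-r}$ converges and consecutive ratios $m(E_{t_{k+1}})/m(E_{t_k})$ tend to $1$, for otherwise the interpolation step would not preserve the limit; handling the $\varepsilon$-dependence of the exceptional null sets then requires only a routine diagonalization. Parts (i) and (iii), by contrast, are essentially immediate consequences of \equ{siegel}, \equ{rogers}, and Borel--Cantelli.
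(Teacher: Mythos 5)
Your proposal is correct and follows essentially the same route as the paper: part (i) from the Siegel identity alone, part (iii) by Chebyshev with the $(\mathcal{P},r)$-Rogers bound and Borel--Cantelli, and part (ii) by sampling along times $t_k$ with $m(E_{t_k})=k^{\beta}$ (your $\beta=2/(r-1)$ is just a specific admissible choice of the paper's exponent $\gamma>(r-1)^{-1}$) followed by the same monotone sandwich interpolation. The only difference is cosmetic: you justify the existence of the $t_k$ via continuity of $t\mapsto m(E_t)$, a point the paper leaves implicit.
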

\begin{proof}
\begin{itemize}
\item[\rm (i)] Suppose $\ds m(E) < \infty.$ Apply \equ{siegel} to $\ds \mathbbm{1}_E : \R^n \to \R_{\geq 0}$; this is valid in light of Remark \ref{siegelremark}(i). This shows that for $\mu_X$-almost every $\Lambda \in X,$ one has $\ds \widehat{\mathbbm{1}_E}^{{}_{{\mathcal{P}}}}(\Lambda) = \mathrm{card}\left(\Lambda_{{\mathcal{P}}} \cap E\right) < \infty.$ 

\smallskip

\item[\rm (ii)] For each $\ds t \in \R_{>0},$ define $\ds h_t : X \to \R_{\geq 0}$ by $$\ds h_t(\Lambda) := \widehat{\mathbbm{1}_{E_t}}^{{}_{{\mathcal{P}}}}(\Lambda) = \mathrm{card}\left(\Lambda_{{\mathcal{P}}} \cap E_t\right).$$ For each $t \in \R_{>0},$ one has $\ds \int_X h_t \ d\mu_X = c\, m(E_t).$ Let $D = D_{\mathcal{P}, r}$ be as in Definition \ref{SiegelRogers}. Fix any $\gamma \in \R$ with $\ds \gamma > (r-1)^{-1}$. For each $k \in \Z_{\geq 1},$ fix $t_k \in \R_{>0}$ for which $m\left(E_{t_k}\right) = k^\gamma.$ Let $\varepsilon \in \R_{>0}$ be given. For each $k \in \Z_{\geq 1},$ it follows from Markov's inequality and the hypotheses on $G$ that we have 
\begin{align*}
\mu_X\left(\left\lbrace \Lambda \in X : \left| \frac{h_{t_k}(\Lambda)}{ c\, m(E_{t_k})} - 1 \right| \geq \varepsilon \right\rbrace\right) &\leq \frac{1}{\varepsilon^r} \left\|\frac{h_{t_k}}{ c\, m\left(E_{t_k}\right)} - \mathbbm{1}_{X}\right\|_{r}^{r} \\ &\leq \frac{1}{\varepsilon^r} \, \left(\frac{D}{c}\right)^r m\left(E_{t_k}\right)^{1-r} = \left(\frac{D}{\varepsilon \, c }\right)^r k^{\gamma(1-r)}.
\end{align*}

Since $\ds \varepsilon \in \R_{>0}$ is arbitrary and $\ds \gamma(1-r) < -1,$ the Borel\textendash Cantelli lemma now implies that for $\mu_X$-almost every $\Lambda \in X,$ we have $\ds  \lim_{k \to \infty } \frac{\card\left(\Lambda_{\mathcal{P}} \cap E_{t_k}\right)}{c\, m\left(E_{t_k}\right)} = 1.$ For any $k \in \Z_{\geq 1}$ and any $\ds t \in \left[ t_k, t_{k+1} \right),$ we have
\begin{align*}
\frac{k^\gamma}{(k+1)^\gamma}\frac{h_{t_k}}{ m(E_{t_k})} = \frac{h_{t_k}}{ m(E_{t_{k+1}})}\le \frac{h_{t}}{ m(E_{t})} \le  \frac{h_{t_{k+1}}}{ m(E_{t_k})} =   \frac{(k+1)^\gamma}{k^\gamma}\frac{h_{t_{k+1}}}{ m(E_{t_{k+1}})}.
\end{align*} The result follows. 

\smallskip

\item[(iii)] Let $\ds D = D_{\mathcal{P}, r}$ be as in Definition \ref{SiegelRogers}. 
For each $\ds k \in \Z_{\geq 1}$ we have 
\begin{align*}\mu_X\left( \left\lbrace \Lambda \in X : \Lambda_{{\mathcal{P}}} \cap F_k = \varnothing \right\rbrace \right) &\leq 
\mu_X\left( \left\lbrace \Lambda \in X : \left|\widehat{\mathbbm{1}_{F_k}}^{{}_{{\mathcal{P}}}} (\Lambda) - c\, m(F_k)\right|^{r} \ge \left(c\, m(F_k)\right)^{r} \right\rbrace \right) \\ 
&\leq \norm{\widehat{\mathbbm{1}_{F_k}}^{{}_{{\mathcal{P}}}} - c\, m(F_k) \mathbbm{1}_X}_{r}^{r} \big( c\, m(F_k) \big)^{-r} \\
&\leq D^{r} m(F_k) \big( c\, m(F_k) \big)^{-r} 
= D^{r} c^{-r} m(F_k)^{1-r}.
\end{align*} The desired result now follows from the Borel\textendash Cantelli lemma. \end{itemize} \end{proof} 
Statement (ii) of the foregoing theorem is a variation of a very general counting result due to 
Schmidt: see \cite{Schmidt}. See also \cite[Chapter 1, Lemma 10]{Sprind} for a result abstracted by 
Sprind\v zuk  from the work of 
Schmidt. Following Sprind\v zuk, it is not difficult to state and prove {part} (ii) of the above theorem with an estimate for an error term. Let us also mention that our proof of (ii) is similar to an argument used by 
Durrett in his proof of \cite[Chapter 1, Theorem 6.8]{Durrett}.  

\begin{rmk}\label{Minkowski} \rm
Let $G$ be a closed subgroup of $\ASL$, let $\Gamma$ be as in \equ{gamma}, and let ${{\mathcal{P}}}$ be a $\Gamma$-invariant subset of $\Z^n.$ Suppose that $G$ is of $\mathcal{P}$-Siegel type, and suppose that there exists $r \in \R_{> 1}$ for which $G$ is of $\ds \left(\mathcal{P}, r\right)$-Rogers type. It is now {easy} to prove a probabilistic analogue of the Minkowski convex body theorem. Indeed, let $\ds c = c_\mathcal{P}$ and $\ds D = D_{\mathcal{P}, r}$ be as in Definition \ref{SiegelRogers}; let $E$ be a Borel subset of $\R^n$ with $\ds 0 < m(E) < \infty.$ As in the proof of Theorem \ref{genericcounting}(iii), it follows that
\[ \mu_X\left( \left\lbrace \Lambda \in X : \Lambda_{{\mathcal{P}}} \cap E = \varnothing \right\rbrace \right) \leq D^r c^{-r} m(E)^{1-r}. \] This sort of result, {with $r=2$,} was first established by Athreya\textendash Margulis 
for $G = \mathrm{SL}_n(\R)$ and $\mathcal{P} = \Z^n_{\ne0}$ 
 \cite[Theorem 2.2]{Log}, {and then}
by Athreya 
for $G = \ASL$ and $\mathcal{P} = \Z^n$ 
\cite[Theorem 1]{Affine}.
\end{rmk}

\begin{rmk}\rm 
{Suppose $n \geq 2$ is arbitrary and $G = \SL.$ In \cite[Corollary 2.14]{Str}, Str\"ombergsson proves that the bound of Athreya\textendash Margulis in \cite[Theorem 2.2]{Log} is sharp. It now follows from the preceding remark that for each $r \in \R_{>2}$ and each subset $\mathcal{P}$ of $\Z^n_{\ne0}$ that is $\Gamma = \mathrm{SL}_n(\Z)$-invariant, we have that the group $G$ is \textsl{not} of $\left(\mathcal{P}, r\right)$-Rogers type. } 
\end{rmk}

{\begin{rmk}\rm 
Suppose $n \geq 2$ is arbitrary and $G = \R^n,$ which is a closed subgroup of $\ASL.$ Then $\Gamma = \Z^n$, and $X = \R^n/\Z^n$ 
is the $n$-dimensional torus.
It is easy to see that $\R^n$ is then of $\Z^n$-Siegel type with $c_{\Z^n} = 1.$ For each $r \in \R_{>1},$ however, $\R^n$ is is \textsl{not} of $\left(\Z^n, r\right)$-Rogers type. This may be seen from the spectacular impossibility of obtaining a result as in Remark \ref{Minkowski}. For any $\varepsilon \in (0, 1),$ define $\ds U_\varepsilon := \R^{n-1} \times \left( \frac{1}{2} - \frac{\varepsilon}{2}, \frac{1}{2} + \frac{\varepsilon}{2}\right) \subseteq \R^{n-1} \times (0, 1).$ For each $\varepsilon \in (0, 1),$ we then have $\ds m(U_\varepsilon) = \infty$ and $\ds \mu_X\left( \left\lbrace \Lambda \in X : \Lambda_{\Z^n} \cap U_\varepsilon = \varnothing \right\rbrace \right) = 1 - \varepsilon.$
\end{rmk}  }

\smallskip 

In the following section, we transfer our counting results for generic lattices to statements involving small values of generic functions, thereby establishing a more general version of Theorem \ref{simplemainapproxthm}.      

\section{Zero\textendash full laws in Diophantine approximation} \label{zerofull}
We begin by proving two lemmata. 

\begin{lem}\label{normimmmeas}
Let $\ds f 
: \R^n \to \R^{\ell}$ be subhomogeneous,  and let $\ds \psi 
: \R_{\geq 0} \to \left(\R_{>0}\right)^\ell$ be regular and nonincreasing.  {Let $\eta$ and $\nu$ be any norms on $\R^n$, and let $s \in \R_{>0}.$} Then
{$m(A_{f, s\psi, \eta}) < \infty$ 
if and only if $m(A_{f, \psi, \nu}) < \infty$}.
\end{lem}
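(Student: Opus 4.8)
The plan is to show that each of the two finiteness conditions is equivalent to a norm-independent condition, namely the finiteness of an integral of the form $\int_1^\infty \max_j\big(\psi_j(z)\big)^{n/d_f} z^{n-1}\,dz$ (or something essentially equivalent), so that the two are equivalent to each other. The key mechanism is that subhomogeneity and regularity make the $\eta$- and $\nu$-versions of $A_{f,\psi,\cdot}$ comparable up to a fixed linear change of variables and a bounded rescaling of $\psi$, and that rescaling $\psi$ by a constant does not affect finiteness (which is exactly why the statement carries the harmless factor $s$).

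First I would reduce to the scalar case $\ell=1$: since $|f(\xx)| \les \psi(\nu(\xx))$ means $|f_j(\xx)| \le \psi_j(\nu(\xx))$ for all $j$, we have $A_{f,\psi,\nu} = \bigcap_j A_{f_j,\psi_j,\nu}$, and it suffices to understand each factor; I'd also note $f_j$ is itself subhomogeneous with the same $d_f$ and $\psi_j$ is regular and nonincreasing, so the general statement follows from careful bookkeeping once the scalar comparison is in hand. Next, the core step: since any two norms on $\R^n$ are equivalent, there are constants $0 < \alpha \le \beta$ with $\alpha\,\eta(\xx) \le \nu(\xx) \le \beta\,\eta(\xx)$ for all $\xx$. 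Using that $\psi$ is nonincreasing, $\psi\big(\nu(\xx)\big) \les \psi\big(\alpha\,\eta(\xx)\big)$, and using regularity iteratively (choose $N$ with $a_\psi^{-N} \le \alpha$, equivalently apply the defining inequality $b_\psi\psi(z) \les \psi(a_\psi z)$ enough times) we get $\psi\big(\alpha\,\eta(\xx)\big) \les b_\psi^{-N}\psi\big(\eta(\xx)\big)$ for all $\xx$ outside a bounded set; a symmetric argument bounds things the other way. Hence there is a constant $C \ge 1$ (depending only on $\psi$, $\eta$, $\nu$) with $A_{f,\psi,\nu} \subseteq A_{f, C\psi, \eta} \cup K$ and $A_{f, \psi, \eta} \subseteq A_{f, C\psi, \nu} \cup K'$ for bounded sets $K, K'$. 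Since bounded sets have finite measure, this already shows that $m(A_{f,\psi,\nu}) < \infty$ iff $m(A_{f, C\psi, \eta}) < \infty$ for some (equivalently every) sufficiently large $C$, and similarly with the roles of the norms reversed.

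It then remains to prove the scaling invariance: for any $s' \ge s > 0$, $m(A_{f, s\psi, \eta}) < \infty$ iff $m(A_{f, s'\psi, \eta}) < \infty$; combined with the previous paragraph (applied with a possibly larger constant absorbing both $s$ and $C$) this yields the lemma. Monotonicity gives one direction for free. For the other, I would use subhomogeneity: if $|f(\xx)| \les s'\psi(\eta(\xx))$, consider the rescaled point $t\xx$ for a well-chosen $t \in (0,1)$; then $|f(t\xx)| \les t^{d_f}|f(\xx)| \les t^{d_f} s'\psi(\eta(\xx))$, and choosing $t^{d_f} s' \le s$ together with $\psi(\eta(\xx)) \les \psi(t\,\eta(\xx)) = \psi(\eta(t\xx))$ (nonincreasing) shows $t\xx \in A_{f, s\psi, \eta}$. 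Thus $t\cdot A_{f, s'\psi, \eta} \subseteq A_{f, s\psi, \eta}$ (up to a bounded discrepancy), and since scaling by a fixed $t$ multiplies Lebesgue measure by $t^n$, finiteness transfers. The main obstacle I anticipate is the careful handling of the "bounded set" error terms — subhomogeneity is only assumed for $t\in(0,1)$, so near the origin one has no control, and one must make sure every comparison is stated only for $\eta(\xx)$ (or $\nu(\xx)$) large, folding the remaining bounded region into finite-measure error sets; none of the estimates is deep, but the quantifier management (which constants depend on which data, and on which region the inequalities hold) needs to be done cleanly.
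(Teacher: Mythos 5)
Your core argument is correct and is essentially the paper's proof: the dilation $t\,A_{f,s'\psi,\eta}\subseteq A_{f,s\psi,\eta}$ (subhomogeneity plus $\psi$ nonincreasing) shows that rescaling $\psi$ by a positive constant does not affect finiteness, while monotonicity of $\psi$ together with iterated regularity gives $A_{f,\psi,\nu}\subseteq A_{f,\,b_\psi^{-N}\psi,\eta}$ for every $\xx\neq\mathbf{0}$ (so the bounded error sets $K,K'$ are unnecessary), and by symmetry this finishes the proof; note also that $d_f,a_\psi,b_\psi$ are by definition uniform over the components, so no separate $\ell=1$ reduction is needed. One caveat: your opening assertion that finiteness of $m(A_{f,\psi,\eta})$ is equivalent to convergence of $\int_1^\infty \max_j\psi_j(z)^{n/d_f}z^{n-1}\,dz$ is false in general (the measure depends on $f$, e.g.\ $f\equiv 0$ gives $A_{f,\psi,\eta}=\R^n$ for every $\psi$), but since that remark is never used, the argument you actually carry out stands.
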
        
\begin{proof} 
Suppose without loss of generality that the image of $f$ is a subset of $\left(\R_{\geq 0}\right)^\ell.$
Let $\ds a = a_\psi$, $\ds b = b_\psi,$ and $\ds d = d_f$ be as in Definition \ref{basicapdefns}; let $\xx \in A_{f, \psi, \eta}$. Let $s \in \R_{>0}$ be given. Suppose first $s \leq 1.$ Then $\ds s^{1/d} \in (0, 1],$ and thus \[  f\big(s^{1/d} \xx \big) \les s f(\xx) \les s\psi\big(\eta(\xx)\big) \les s\psi\big(\eta(s^{1/d}\xx)\big). \]  This proves $\ds s^{1/d} A_{f, \psi, \eta} \subseteq A_{f, s\psi, \eta}.$ Also, note that $\ds A_{f, s\psi, \eta} \subseteq A_{f, \psi, \eta}$. Hence, the Lebesgue measure of $A_{f, s\psi, \eta}$ is finite if and only if the Lebesgue measure of $A_{f, \psi, \eta}$ is finite. Suppose next $s \geq 1.$ By repeating the preceding argument with $s \psi$ in place of $\psi$ and $s^{-1}$ in place of $s,$ we obtain the same conclusion for $s \geq 1.$

Now, using the equivalence of the two norms, fix $C \in \R_{> 1}$ for which $\ds C^{-1} \eta \leq \nu \leq C\eta.$ Fix a positive integer ${k}$ for which $\ds a^{k} > C.$ Suppose that $A_{f, \psi, \nu}$ has infinite Lebesgue measure. Let $\ds \xx \in A_{f, \psi, \nu}.$ By a simple induction, \[ f(\xx) \les \psi\big(\nu(\xx)\big) \les \psi\left(C^{-1}\eta(\xx)\right) \les b^{-{k}} \psi\big(a^{k} C^{-1}\eta(\xx)\big) \les b^{-{k}} \psi\big(\eta(\xx)\big). \] Thus, the Lebesgue measure of $\ds A_{f, b^{-{k}}\psi, \eta}$ is infinite as well. In conjunction with the foregoing and by symmetry, this completes the proof.  \end{proof}

\begin{lem}\label{sublinearnew}
Let $\ds \psi 
: \R_{\geq 0} \to \left(\R_{>0}\right)^{\ell}$ be regular and nonincreasing. Then the following holds: for any $c \in \R_{\geq 0}$ there exists $s \in \R_{>0}$ such that for each $x \in [0, c]$ and each $y \in \R_{>c},$ one has $\ds \psi(y-x) \les s\psi(y).$
\end{lem}
\begin{proof}
Let $\ds a = a_\psi$ and $\ds b = b_\psi$ be as in Definition \ref{basicapdefns}.
Let $c \in \R_{\geq 0}.$ Define \[ s := \max_{1 \leq i \leq {\ell}}\left(\frac{1}{b}, \frac{\psi_{i}(0)}{\psi_{i}( \frac{ac}{a-1})}\right). \]
Let $x \in [0, c]$ and $y \in \R_{> c}.$ We consider two cases.
\begin{itemize}
    \item Case 1: suppose $\ds y \leq \frac{ac}{a-1}.$ Then
    \[ \psi(y-x) \les \psi(0) \les s\psi\left( \frac{ac}{a-1}\right) \les s \psi(y). \] 
    \item Case 2: suppose $\ds y > \frac{ac}{a-1}.$ Since $\ds c \geq x \geq 0$ and $a - 1 >0,$ it follows
  $\ds y > \frac{ax}{a-1}$; hence,
        $ \ds y-x > \frac{y}{a}$.
    Thus, \[ \psi(y-x) \les \psi\left(\frac{y}{a}\right) \les \frac{1}{b} \psi\left( a \cdot \frac{y}{a} \right) = \frac{1}{b} \psi(y) \les s\psi(y). \] \end{itemize} This completes the proof. \end{proof}
    
Before proving our main results, let us first augment two definitions given in \S 1.

\begin{defn}\label{twobasicapdefnes}
In this definition, assume that we are using the same notation as in Definition \ref{basicapdefns}. Now {take  an arbitrary subset $\mathcal{P}$ of $\Z^n$ and}
   \begin{itemize}
   \item 
   say that $f$ is $(\psi, \nu, \mathcal{P})$-\textsl{approximable} if $A_{f, \psi, \nu} \cap \mathcal{P}$ has infinite cardinality; 
   \smallskip
   
   \item 
   say that $f$ is \textsl{uniformly} $(\psi, \nu, \mathcal{P})$-\textsl{approximable} if
    {$B_{f, \psi(T), \nu, T} \cap \mathcal{P}  \neq \varnothing$ for each sufficiently large $T \in \R_{> 0}$}.
   \end{itemize}
\end{defn}
    
{Notice that by taking $\mathcal{P} = \Z^n_{\ne0}$ in the above definition  we recover the previously defined notions of asymptotic and uniform $(\psi, \nu)$-approximability.
    We now state and prove our main result on asymptotic approximation. 

\begin{thm}\label{mainapproxthmnew}
Let $G$ be a closed subgroup of $\ASL$, let $\Gamma$ be as in \equ{gamma}, and let ${{\mathcal{P}}}$ be a $\Gamma$-invariant subset of $\Z^n.$ Suppose $G$ is of $\mathcal{P}$-Siegel type. Let $\ds {c} = {c}_{{{\mathcal{P}}}}$ be as in Definition \ref{SiegelRogers}(i). Let $\ds \psi 
: \R_{\geq 0} \to \left(\R_{>0}\right)^{\ell}$ be regular and nonincreasing; let $\ds f 
: \R^n \to \R^{\ell}$ be subhomogeneous; let $\nu$ and $\eta$ be norms on  $\R^n$. 

\smallskip

\begin{itemize}
\item[\rm (i)]  Suppose $\ds m\left(A_{f, \psi, \eta}\right) < \infty.$ Then for almost every  $g \in G$ the function $f\circ g$ is not 
$(\psi, \nu, {{\mathcal{P}}})$-approximable. 
\smallskip

\item[ \rm(ii)]     
Suppose $\ds m\left(A_{f, \psi, \eta
}\right) = \infty$, and suppose there exists $r \in \R_{> 1}$  for which $G$ is of $\ds \left(\mathcal{P}, r\right)$-Rogers type. Then for each nonempty 
 compact subset $K$ of $G$  there exist constants $D_K \in \R_{\geq 1},$ $E_K \in \R_{\geq 0}$  and $\ds J_K \in \R_{\geq 1}$ such that for $\mu_G$-almost every $g \in K$ we have 
 \begin{align}\label{limsup}
\limsup_{T \to \infty}\frac{\card\left\lbrace \vv \in {{\mathcal{P}}} : (f \circ  g )(\vv) \les \psi\big( \nu(\vv) \big) \ \mathrm{and} \ 2 D_K E_K < \nu(\vv) \leq T \right\rbrace}{m\left(\left\lbrace \ttt \in \R^n : f(\ttt) \les J_K \psi\big(\nu(\ttt)\big) \ \mathrm{and} \ {E_K} < \nu(\ttt) \leq D_K T + E_K \right\rbrace\right)} \leq {{c}_{{\mathcal{P}}}},\qquad \end{align} and 
 \begin{align}\label{liminf}\qquad\ \ 
\liminf_{T \to \infty} \frac{\card\left\lbrace \vv \in {{\mathcal{P}}} : (f \circ  g )(\vv) \les \psi\big( \nu(\vv)\big) \ \mathrm{and} \ {E_K D_K^{-1}} < \nu(\vv) \leq D_K E_K + D_K T \right\rbrace}{m\left(\left\lbrace \ttt \in \R^n : f(\ttt) \les J_K^{-1}\psi\big((\nu(\ttt)\big) \ \mathrm{and} \ 2E_K < \nu(\ttt) \leq T \right\rbrace\right)} \geq {{c}_{{\mathcal{P}}}}. \end{align} 
Moreover, if $K \subseteq \SL,$ then each of the above inequalities holds with $E_K = 0.$ \\
In particular, for almost every $g \in G$ the function $f\circ g$ is $(\psi, \nu, {{\mathcal{P}}})$-approximable. 

\end{itemize}          
\end{thm}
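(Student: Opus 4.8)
The plan is to reduce both parts to the generic counting results of Theorem~\ref{genericcounting}, the bridge being the identification of $\mathcal{P}$ with $\Lambda_{\mathcal P}$ via $\vv\mapsto g\vv$ for $\Lambda=g\Z^n$. Since $G$ is $\sigma$-compact, it suffices to prove each assertion for $\mu_G$-a.e.\ $g$ in an arbitrary fixed nonempty compact $K\subseteq G$ and then exhaust $G$ by countably many such $K$. The technical heart is a uniform \emph{distortion estimate}: writing $h\in\ASL$ as $h\xx=B_h\xx+\cc_h$, the suprema over $h\in K\cup K^{-1}$ of $\|B_h\|$, $\|B_h^{-1}\|$ and $\nu(\cc_h)$ are finite, so one can choose $D_K\in\R_{\ge1}$ and $E_K\in\R_{\ge0}$ — with $E_K=0$ admissible when $K\subseteq\SL$, since then each $\cc_h=\mathbf 0$ — so that
\[
D_K^{-1}\bigl(\nu(\ww)-E_K\bigr)\ \le\ \nu(g^{\pm1}\ww)\ \le\ D_K\,\nu(\ww)+E_K\qquad(\ww\in\R^n,\ g\in K).
\]
Feeding this into the hypotheses that $\psi$ is nonincreasing and regular, and invoking Lemma~\ref{sublinearnew} together with the iterated regularity bound $\psi\bigl(D_K^{-1}z\bigr)\les b_\psi^{-k}\psi(z)$ (valid once $a_\psi^{\,k}\ge D_K$), I obtain a sufficiently large constant $J_K\in\R_{\ge1}$, depending only on $K$ and $\psi$, such that for every $g\in K$: \textbf{(a)} if $(f\circ g)(\vv)\les\psi\bigl(\nu(\vv)\bigr)$ and $\nu(\vv)>2D_KE_K$, then $\ww:=g\vv$ satisfies $f(\ww)\les J_K\psi\bigl(\nu(\ww)\bigr)$ and $E_K<\nu(\ww)\le D_K\nu(\vv)+E_K$; and \textbf{(b)} conversely, if $f(\ww)\les J_K^{-1}\psi\bigl(\nu(\ww)\bigr)$ and $\nu(\ww)>2E_K$, then $\vv:=g^{-1}\ww$ satisfies $(f\circ g)(\vv)\les\psi\bigl(\nu(\vv)\bigr)$ and $E_KD_K^{-1}<\nu(\vv)\le D_K\nu(\ww)+E_K$. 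These are exactly the inclusions that convert the size-windows appearing in \eqref{limsup} and \eqref{liminf}.

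For part~(i), set $E:=A_{f,J_K\psi,\nu}\cap\{\ttt:\nu(\ttt)>E_K\}$. By Lemma~\ref{normimmmeas} (which is where subhomogeneity of $f$ and regularity of $\psi$ enter) the hypothesis $m(A_{f,\psi,\eta})<\infty$ forces $m(E)<\infty$. By~(a), for every $g\in K$ and $\Lambda=g\Z^n$, each $\vv\in\mathcal{P}$ with $(f\circ g)(\vv)\les\psi(\nu(\vv))$ and $\nu(\vv)>2D_KE_K$ gives a distinct point $g\vv\in\Lambda_{\mathcal P}\cap E$; hence $\card\bigl(\mathcal{P}\cap A_{f\circ g,\psi,\nu}\bigr)\le\card\bigl(\mathcal{P}\cap\{\nu\le2D_KE_K\}\bigr)+\card(\Lambda_{\mathcal P}\cap E)$. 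By Theorem~\ref{genericcounting}(i) the last term is finite for $\mu_X$-a.e.\ $\Lambda$, and since $\mu_X$ is induced from $\mu_G$, the exceptional $g\in K$ form a $\mu_G$-null set; thus $f\circ g$ is not $(\psi,\nu,\mathcal{P})$-approximable for $\mu_G$-a.e.\ $g\in K$, and exhausting $G$ completes the proof.

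For part~(ii), put $E^{+}:=A_{f,J_K\psi,\nu}\cap\{\nu>E_K\}$ and $E^{-}:=A_{f,J_K^{-1}\psi,\nu}\cap\{\nu>2E_K\}$; by Lemma~\ref{normimmmeas}, $m(A_{f,\psi,\eta})=\infty$ makes both of infinite measure. Applying Theorem~\ref{genericcounting}(ii) — legitimate by the assumed $(\mathcal{P},r)$-Rogers type — to $E^{+}$ and $E^{-}$ with the norm $\nu$ yields, for $\mu_X$-a.e.\ $\Lambda$, that $\card(\Lambda_{\mathcal P}\cap E^{\pm}_t)\big/\bigl(c_{\mathcal P}\,m(E^{\pm}_t)\bigr)\to1$ as $t\to\infty$, where $E^{\pm}_t=\{\ttt\in E^{\pm}:\nu(\ttt)\le t\}$. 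Now~(a) shows that for every $g\in K$ the numerator of \eqref{limsup} is at most $\card(\Lambda_{\mathcal P}\cap E^{+}_{D_KT+E_K})$, whose normalizing volume $m(E^{+}_{D_KT+E_K})$ is precisely the denominator of \eqref{limsup}; likewise~(b) shows the numerator of \eqref{liminf} is at least $\card(\Lambda_{\mathcal P}\cap E^{-}_{T})$, with normalizing volume $m(E^{-}_{T})$ the denominator of \eqref{liminf}. Since these inclusions are deterministic in $g$, dividing by $c_{\mathcal P}$ times the relevant volume and passing to $\limsup$, resp.\ $\liminf$, as $T\to\infty$ gives \eqref{limsup} and \eqref{liminf} for $\mu_G$-a.e.\ $g\in K$ (with $E_K=0$ when $K\subseteq\SL$). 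Finally $m(E^{-}_T)\to\infty$ and $c_{\mathcal P}>0$, so \eqref{liminf} forces the set it counts to be infinite, whence $f\circ g$ is $(\psi,\nu,\mathcal{P})$-approximable for $\mu_G$-a.e.\ $g\in G$.

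I expect the only genuine obstacle to be the uniform bookkeeping behind step~(a)/(b): choosing $D_K$, $E_K$, $J_K$ so that the monotonicity-and-regularity manipulations close up in \emph{both} directions and so that the resulting size-windows coincide with the ones printed in \eqref{limsup}--\eqref{liminf}. In the genuinely affine case $G\not\subseteq\SL$ the additive constant $E_K$ cannot be removed, which is what forces those windows to be slightly asymmetric and is exactly why one must route the estimates through Lemma~\ref{sublinearnew}; the deeper counting input, Theorem~\ref{genericcounting}(ii), is already available.
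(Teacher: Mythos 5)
Your proposal is correct and takes essentially the same route as the paper: the compact-uniform constants $D_K, E_K, J_K$ built from Lemmas \ref{normimmmeas} and \ref{sublinearnew} and your inclusions (a)/(b) are exactly the paper's \equ{supinc} and \equ{infinc}, and the rest is Theorem \ref{genericcounting}(i),(ii) together with $\sigma$-compactness of $G$, just as in the paper. The only cosmetic difference is in part (i), where the paper argues with per-$g$ constants and a countable family of integer multiples $N\psi$ of the approximating function instead of exhausting $G$ by compacta, but both versions are valid.
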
    
\begin{proof} 
Let us denote elements of $\ASL$ by $\langle h,\zz\rangle$, where $h\in \SL$ and $\zz\in\R^n$; that is, \eq{affine}{\langle h,\zz\rangle : \R^n \to \R^n\text{ is the affine transformation given by }\xx \mapsto h\xx + \zz.} We suppose without loss of generality that the image of $f$ is a subset of $\left( \R_{\geq 0}\right)^\ell.$ 
For any $h \in \SL$ we write $\|h\|$ to denote the operator norm of $h$ when both the domain and codomain of $h$ are equipped with the norm $\nu$ on $\R^n$ that is mentioned in the hypotheses. 

\smallskip    
{Suppose that $\ds m\left(A_{f, \psi, \eta}\right) < \infty.$} Lemma \ref{normimmmeas} implies that for any $N \in \Z_{\geq 1}$ we have $\ds m\left(A_{f, N\psi, \nu}\right) < \infty.$ Theorem \ref{genericcounting}(i) then implies $$\mu_{X}\left(\left\{\Lambda \in X: \card\left(\Lambda_{{\mathcal{P}}} \cap A_{f, N\psi, \nu}\right) = \infty\right\}\right) = 0,$$ which is equivalent to 
\eq{implication}{ \mu_{G}\left(\left\{g \in G: \card\left(g\mathcal{P} \cap A_{f, N\psi, \nu}\right) = \infty\right\}\right) = 0.} Let $\ds a = a_\psi,$ $\ds b = b_\psi,$ and $\ds d = d_f$ be as in Definition \ref{basicapdefns}. Let $g = \langle h, \zz \rangle$ be any element of $G$ for which 
\eq{appr}{f \circ g\text{ is $\left(\psi, \nu, {{\mathcal{P}}}\right)$-approximable}.} Let $\ds D := \max\left\lbrace \|h\|, \norm{h^{-1}}\right\rbrace,$ and let $\ds E := \nu(\zz).$ Let ${k}$ be a nonnegative integer for which $\ds a^{k} \geq D.$ Let $\ds C := b^{-{k}}.$ Appealing to Lemma \ref{sublinearnew}, we let $F \in \R_{>0}$ be a constant for which the following is true: for each $x \in [0, E]$ and each $\ds y \in (E, \infty),$ we have $\psi(y-x) \les F \psi(y).$ Finally, let $N$ be any integer with $N \geq CF.$

Let $\vv$ be an arbitrary element of the infinite set $\ds \left\lbrace \xx \in   \mathcal{P} \cap A_{f \circ g, \psi, \nu}   : \nu(\xx) > 2 DE \right\rbrace.$
Notice that \[ \norm{h^{-1}} \nu(h\vv) \geq \nu(\vv) > 2D E \geq 2 \norm{h^{-1}} E, \] whence $\ds \nu(h\vv) > 2E.$ Hence, \[ \nu(h\vv+ \zz) \geq \nu(h\vv) - \nu(\zz) > 2E - \nu(\zz) = E. \]

Since $\ds (f \circ g)(\vv) \les \psi\big( \nu(\vv)\big),$ it follows  
\begin{align*}
         f(g\vv) \les \psi\big(\nu(\vv)\big) &\les \psi\left( \frac{\nu(h\vv)}{\|h\|}\right) \les b^{-{k}} \psi\left( a^{k} \frac{\nu(h\vv)}{\|h\|}\right) \les C \psi\big(\nu(h\vv)\big) 
         \\         &\les C \psi\big( \nu(h\vv + \zz) - \nu(\zz) \big) 
         \les C F \psi\big( \nu(h\vv + \zz) \big) \\
         &
         \les N\psi\big( \nu(h\vv + \zz) \big) 
         = N\psi\big( \nu(g\vv) \big).
    \end{align*} Thus, $\card\left(g{{\mathcal{P}}} \cap A_{f, N\psi, \nu}\right) = \infty;$ hence, in view of \equ{implication}, the set of $g\in G$ that satisfy \equ{appr} is null. This proves (i). 
    
\smallskip

{Suppose now  that $\ds m\left(A_{f, \psi, \eta}\right) = \infty$; suppose further that there exists $r \in \R_{> 1}$  for which $G$ is of $\ds \left(\mathcal{P}, r\right)$-Rogers type.} Let $\ds \varepsilon \in \R_{>0}$ be given. Let $K$ be an arbitrary nonempty compact subset of $G$. Since the inversion map is a homeomorphism and finite unions of compact sets are compact, we assume without loss of generality that $K = K^{-1}$. Define $\pi : \ASL \to \SL$ and $\rho : \ASL \to \R^n$ by $\ds \pi : \langle h, \zz \rangle \mapsto h$ and $\ds \rho : \langle h, \zz \rangle \mapsto \zz.$ Note that $\pi$ is a group homomorphism. We define \eq{constants}{ D_K := \sup \left\lbrace \|h\| : h \in \pi(K)  \right\rbrace, \ \ \ \ \ \ \ \ \ \ \ \ \ \ \ E_K := \sup \left\lbrace \nu(\zz) : \zz \in \rho(K) \right\rbrace. } Note that $E_K = 0$ if and only if $K \subseteq \SL.$ Note that $D_K \geq 1.$ Let $\ds a = a_\psi$ and $\ds b= b_\psi$ be as in Definition~\ref{basicapdefns}. Set $\ds {k} := \min\left\lbrace j \in \Z_{\geq 0} : a^j \geq D_K \right\rbrace.$ Set $C_K := b^{-{k}}.$ Note that $C_K \geq 1.$ Appealing to Lemma \ref{sublinearnew}, we let $F_K \in \R_{\geq 1}$ be a constant for which the following is true: for each $x \in [0, E_K]$ and each $y \in (E_K, \infty),$ we have $\ds \psi(y-x) \les F_K \psi(y).$ Set $\ds J_K := C_K F_K.$ 

   
Let $\ds \langle h_1,\zz_1 \rangle \in K$ be arbitrary. Let $R$ be any real number with $R > 2D_KE _K.$ Let $\xx$ be any element of $\R^n$ with $\ds 2 D_K E_K < \nu(\xx) \leq R.$ Then \[ \norm{h_1^{-1}}\nu(h_1\xx) \geq \nu(\xx) > 2D_K E_K \geq 2 \norm{h_1^{-1}} E_K, \] whence $\ds \nu(h_1\xx) > 2E_K.$ It follows \[ \nu(h_1\xx + \zz_1) \geq \nu(h_1\xx) - \nu(\zz_1) > 2E_K - \nu(\zz_1) \geq E_K. \]  Suppose further that $\ds f(h_1\xx + \zz_1) \les \psi\big( \nu(\xx)\big).$ Then 
\begin{align*}
f(h_1\xx + \zz_1) &\les \psi\big(\nu(\xx)\big) \les \psi\left( \frac{\nu(h_1\xx)}{\|h_1\|}\right) \les b^{-{k}} \psi\left( a^{k} \frac{\nu(h_1\xx)}{\|h_1\|}\right) \les C_K \psi\big(\nu(h_1\xx)\big) \\
&\les C_K \psi\big( \nu(h_1\xx + \zz_1) - \nu(\zz_1)\big) 
\les C_K F_K \psi\big( \nu(h_1\xx + \zz_1) \big) 
= J_K \psi\big( \nu(h_1\xx + \zz_1) \big).
\end{align*}
Finally, we note that $\ds \nu(h_1\xx + \zz_1) \leq D_K R + E_K.$ We have therefore shown 
\smallskip
\eq{supinc}{ \langle h_1, \zz_1 \rangle
\left\lbrace \ttt \in A_{ f \circ \langle h_1, \zz_1 \rangle, \psi, \nu } : 2 D_K E_K < \nu(\ttt) \leq R \right\rbrace 
 \subseteq \left\lbrace \ttt \in A_{f, J_K \psi, \nu} : E_K < \nu(\ttt) \leq D_K R + E_K \right\rbrace.} 

\smallskip

By Lemma \ref{normimmmeas}, we have $\ds m\left(\left\lbrace \ttt \in A_{f, J_K \psi, \nu} : \nu(\ttt) > E_K \right\rbrace\right) = \infty.$ By using \equ{supinc} and then applying Theorem \ref{genericcounting}(ii), it follows that for $\mu_G$-almost every $ \langle h,\zz\rangle \in K$ and any $\varepsilon \in \R_{>0}$ there exists some $\ds T_{ \langle h,\zz\rangle} \in \R_{>0}$ such that for every 
$T \geq T_{ \langle h,\zz\rangle},$ we have
\begin{align*}
&\frac{\card\left\lbrace \vv \in {{\mathcal{P}}} : (f \circ  \langle h,\zz\rangle )(\vv) \les \psi\big( \nu(\vv) \big) \ \text{and} \ 2 D_K E_K < \nu(\vv) \leq T \right\rbrace}{m\left(\left\lbrace \ttt \in \R^n : f(\ttt) \les J_K\psi\big(\nu(\ttt)\big) \ \text{and} \ {E_K} < \nu(\ttt) \leq D_K T + E_K \right\rbrace\right)} \\
&\leq \frac{\card\left\lbrace \ww \in  \langle h,\zz\rangle {{\mathcal{P}}} : f(\ww) \les J_K\psi\big(\nu(\ww)\big) \ \text{and} \ {E_K} < \nu(\ww) \leq D_K T + E_K \right\rbrace}{m\left(\left\lbrace \ttt \in \R^n : f(\ttt) \les J_K\psi(\nu\big(\ttt)\big) \ \text{and} \ {E_K} < \nu(\ttt) \leq D_K T + E_K \right\rbrace\right)} \\
&< {{c}_{{\mathcal{P}}}} + \varepsilon.
\end{align*}
It follows that   \eqref{limsup} holds for $\mu_G$-almost every $ \langle h,\zz\rangle \in K$.

\smallskip

Now let $\ds \langle h_2,\zz_2 \rangle \in K$ be arbitrary. Let $R'$ be any real number with $R' > 2E_K.$ By an argument similar to the one given for \equ{supinc}, one can show 
\smallskip
\eq{infinc}{ \langle h_2, \zz_2 \rangle^{-1}
\left\lbrace \ttt \in  A_{f, J_K^{-1}\psi, \nu} : 2 E_K <\nu(\ttt) \leq R' \right\rbrace
 \subseteq \left\lbrace \ttt \in A_{f \circ \langle h_2, \zz_2 \rangle, \psi, \nu} : {E_K D_K^{-1}} < \nu(\ttt) \leq D_K(E_K + R') \right\rbrace.}

\smallskip

By Lemma \ref{normimmmeas}, we have $\ds m\left(\left\lbrace \ttt \in  A_{f, J_K^{-1}\psi, \nu} : \nu(\ttt) > 2 E_K \right\rbrace\right) = \infty.$ By using \equ{infinc} and then applying Theorem \ref{genericcounting}(ii), it follows that for $\mu_G$-almost every $\langle h,\zz\rangle \in K$ and any $\varepsilon \in \R_{>0}$ there exists some $\ds T_{ \langle h,\zz\rangle}' \in \R_{>0}$ such that for every real $T \geq T_{ \langle h,\zz\rangle}',$ we have
\begin{align*}
&\frac{\card\left\lbrace \vv \in {{\mathcal{P}}} : (f \circ  \langle h,\zz\rangle )(\vv) \les \psi\big( \nu(\vv)\big) \ \text{and} \ {E_K D_K^{-1}} < \nu(\vv) \leq D_K(E_K + T) \right\rbrace}{m\left(\left\lbrace \ttt \in \R^n : f(\ttt) \les J_K^{-1}\psi\big(\nu(\ttt)\big) \ \text{and} \ 2E_K < \nu(\ttt) \leq T \right\rbrace\right)} \\
&\geq \frac{\card\left\lbrace \ww \in  \langle h,\zz\rangle {{\mathcal{P}}} : f(\ww) \les J_K^{-1}\psi\big(\nu(\ww)\big) \ \text{and} \ 2E_K < \nu(\ww) \leq T \right\rbrace}{m\left(\left\lbrace \ttt \in \R^n : f(\ttt) \les J_K^{-1}\psi\big(\nu(\ttt)\big) \ \text{and} \ 2E_K < \nu(\ttt) \leq T \right\rbrace\right)} \\
&> {{c}_{{\mathcal{P}}}} - \varepsilon.
\end{align*}
Thus, \eqref{liminf} holds for  $\mu_G$-almost every $ \langle h,\zz\rangle \in K$.
%
%
The final statement of (ii) now follows from the $\sigma$-compactness of $G$ and an application of Lemma \ref{normimmmeas}. 
\end{proof} 

\ignore{\begin{rmk}\label{notsubhomog} \rm
{The assumption that $f = (f_1, \dots , f_\ell) : \R^n \to \R^\ell$ is subhomogeneous in the foregoing theorem may be replaced with the following assumption:}  

\smallskip
{\noindent \sl {The function $f$ is Borel measurable, and for any norms $\eta$ and $\nu$ on $\R^n$ and any $s \in \R_{>0},$ the Lebesgue measure of $A_{f, s\psi, \eta}$ is finite if and only if that of $A_{f, \psi, \nu}$ is finite.} } 
\smallskip

{The same proof then works essentially verbatim:~one simply has to cite the above aforementioned assumption instead of Lemma \ref{normimmmeas}. In other words, subhomogeneity is necessary for the preceding proof only insofar as it implies the conclusion of Lemma \ref{normimmmeas} (and Borel measurability).  }
\end{rmk}}

We now prepare to prove our results on uniform approximation. We first prove a lemma similar to Lemma \ref{normimmmeas}.  

\smallskip
\begin{lem}\label{uniformscaling}
 {Let $\ds f 
: \R^n \to \R^{\ell}$ be subhomogeneous, with $\ds d = d_f \in \R_{>0}$ as in Definition \ref{basicapdefns}.}
\begin{itemize} 
\item[\rm (i)] {Let $\nu$ be an arbitrary norm on $\R^n,$ let $t \in (0, 1),$ $T \in \R_{>0},$ and ${\pmb\varepsilon} \in \left(\R_{>0}\right)^\ell.$ Then $ {t} B_{f, {\pmb\varepsilon}, \nu, T} \subseteq B_{f, {t}^d{\pmb\varepsilon}, \nu, {t} T}$.}

\item[\rm (ii)] {Let $\nu$ and $\eta$ be arbitrary norms on $\R^n.$ Then there exists $\ds C^* = C_{\nu, \eta}^* \in \R_{\geq 1}$ such that for each $\ds C \in [C^*, \infty),$ each $T \in \R_{>0},$ and each ${\pmb\varepsilon} \in \left(\R_{>0}\right)^\ell,$ we have \[ C^{-1} B_{f, {\pmb\varepsilon}, \nu, T} \subseteq B_{f, C^{-d}{\pmb\varepsilon}, \eta, T} \subseteq B_{f, {\pmb\varepsilon}, \eta, T} \subseteq C\, B_{f, C^{-d}{\pmb\varepsilon}, \nu, T} \subseteq C\, B_{f, {\pmb\varepsilon}, \nu, T}. \]} 
\end{itemize}
\end{lem}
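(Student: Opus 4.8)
The plan is to prove both parts purely by scaling, using two elementary facts: the partial order $\les$ is preserved under multiplication by a positive scalar, and subhomogeneity gives $|f(t\xx)| \les t^d|f(\xx)|$ for $t \in (0,1)$.

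For (i) I would simply take $\xx \in B_{f,{\pmb\varepsilon},\nu,T}$ and check that $t\xx$ lies in $B_{f,t^d{\pmb\varepsilon},\nu,tT}$: the ball condition is immediate from $\nu(t\xx) = t\,\nu(\xx) \le tT$, and $|f(t\xx)| \les t^d|f(\xx)| \les t^d{\pmb\varepsilon}$ follows from subhomogeneity together with scaling of $\les$ by the positive scalar $t^d$.

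For (ii) I would first use the equivalence of $\nu$ and $\eta$ to fix a constant $C^* = C^*_{\nu,\eta}$, necessarily $\ge 1$, satisfying both $\eta \le C^*\nu$ and $\nu \le C^*\eta$, and then fix any $C \ge C^*$; note that $C^{-1} \in (0,1]$ and $C^{-d} \le 1$, so $C^{-d}{\pmb\varepsilon} \les {\pmb\varepsilon}$. In the asserted chain of four inclusions, the second one ($B_{f,C^{-d}{\pmb\varepsilon},\eta,T} \subseteq B_{f,{\pmb\varepsilon},\eta,T}$) and the fourth one ($C\,B_{f,C^{-d}{\pmb\varepsilon},\nu,T} \subseteq C\,B_{f,{\pmb\varepsilon},\nu,T}$) are immediate, since replacing $C^{-d}{\pmb\varepsilon}$ by ${\pmb\varepsilon}$ only weakens the constraint on $|f|$ and leaves the ball unchanged. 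For the first and third I would argue as in (i) but contracting instead of expanding: given $\xx$ in $B_{f,{\pmb\varepsilon},\nu,T}$ (resp.\ in $B_{f,{\pmb\varepsilon},\eta,T}$), the point $C^{-1}\xx$ satisfies $|f(C^{-1}\xx)| \les C^{-d}|f(\xx)| \les C^{-d}{\pmb\varepsilon}$ by subhomogeneity, while $\eta(C^{-1}\xx) = C^{-1}\eta(\xx) \le C^{-1}C^*\nu(\xx) \le C^{-1}C^*T \le T$ (resp.\ $\nu(C^{-1}\xx) = C^{-1}\nu(\xx) \le C^{-1}C^*\eta(\xx) \le C^{-1}C^*T \le T$) because $C \ge C^*$; hence $C^{-1}\xx \in B_{f,C^{-d}{\pmb\varepsilon},\eta,T}$ (resp.\ $C^{-1}\xx \in B_{f,C^{-d}{\pmb\varepsilon},\nu,T}$), which is exactly the first (resp.\ third) inclusion. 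Concatenating the four inclusions yields (ii).

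I do not expect a genuine obstacle; the only delicate points are bookkeeping ones. First, subhomogeneity is hypothesized only for $t \in (0,1)$, so if one permits $C = C^* = 1$ in (ii) the relevant $|f|$-bound there is the trivial equality $|f(\xx)| = |f(\xx)|$ rather than an application of subhomogeneity — equivalently, one may simply take $C^* > 1$. Second, one must use the correct direction of the norm comparison in each half of the argument: $\eta \le C^*\nu$ enters the first inclusion and $\nu \le C^*\eta$ enters the third, which is why $C^*$ is chosen to dominate both comparison constants simultaneously.
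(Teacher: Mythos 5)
Your proposal is correct and follows essentially the same scaling argument as the paper: part (i) is the definition of subhomogeneity applied to $t\xx$ with $\nu(t\xx)=t\nu(\xx)$, and part (ii) is the same contraction-by-$C^{-1}$ argument using the norm-equivalence constant $C^*$ in each direction, with the middle inclusions coming from $C^{-d}{\pmb\varepsilon}\les{\pmb\varepsilon}$. The bookkeeping points you flag (the trivial case $C=1$ and which direction of the norm comparison enters which inclusion) are handled consistently with the paper's proof.
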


\begin{proof}
Let $\xx \in {t} B_{f, {\pmb\varepsilon}, \nu, T}.$ Then, since  $t^{-1}\xx \in  B_{f, {\pmb\varepsilon}, \nu, T},$
we have 
$$
\nu(\xx) 
= 
{t}\nu(t^{-1}\xx) \leq {t} T\quad  \text{and}\quad |f(\xx)| 
\les {t}^d |f(t^{-1}\xx)| \les {t}^d {\pmb\varepsilon}.$$ Hence, $\xx \in B_{f, {t}^d{\pmb\varepsilon}, \nu, {t} T}$, which proves (i). 

\smallskip

For (ii), fix $\ds C^* = C_{\nu, \eta}^* \in \R_{\geq 1}$ for which $\ds \left(C^*\right)^{-1}\eta(\cdot) \leq \nu(\cdot) \leq C^* \eta(\cdot).$ Fix any $\ds C \in [C^*, \infty)$ and 
let $\ds \xx \in C^{-1} B_{f, {\pmb\varepsilon}, \nu, T}$. 
We have $\ds 
\eta(\xx) 
 \leq \nu(C\xx) \leq T.$ Moreover, one has 
 \[ 
 |f(\xx)| 
 \les C^{-d} |f(C\xx)| \les C^{-d} {\pmb\varepsilon} \les {\pmb\varepsilon}
 \quad\Longrightarrow\quad
 C^{-1} B_{f, {\pmb\varepsilon}, \nu, T} \subseteq B_{f, C^{-d}{\pmb\varepsilon}, \eta, T} \subseteq B_{f, {\pmb\varepsilon}, \eta, T}.\]
 
Interchanging $\nu$ and $\eta$ and then arguing similarly, one obtains
$$ B_{f, {\pmb\varepsilon}, \eta, T} \subseteq C\, B_{f, C^{-d}{\pmb\varepsilon}, \nu, T} \subseteq C\, B_{f, {\pmb\varepsilon}, \nu, T}.$$ This completes the proof. \end{proof}

Let us now introduce some definitions and then prove another lemma. 


\begin{defn}\label{discreteapproxdefn}
Let $\ds f 
: \R^n \to \R^\ell$ and $\psi 
: \R_{\geq 0} \to \left(\R_{>0}\right)^\ell$ be arbitrary maps. Let $\nu$ be an arbitrary norm on $\R^n,$ and let $\mathcal{P}$ be an arbitrary subset of $\Z^n.$ Let $\ds t_\bullet = \left(t_k\right)_{k \in \Z_{\geq 1}}$ be any strictly increasing sequence of elements of $\R_{>0}$ with $\ds \lim_{k \to \infty} t_k = \infty.$ We say that $f$ is $\ds t_\bullet$-\textsl{uniformly} $\left( \psi, \nu, \mathcal{P} \right)$-approximable if 
 {$B_{f, \psi(t_k), \nu, t_k} \cap \mathcal{P}  \neq \varnothing$ for each sufficiently large $k \in \Z_{\geq 1}$}.
\end{defn}


\begin{defn}\label{nicesequence}
Let $\ds t_\bullet = \left(t_k\right)_{k \in \Z_{\geq 1}}$ be any strictly increasing sequence of elements of $\R_{>0}$ with $\ds \lim_{k \to \infty} t_k = \infty.$ We say that $t_\bullet$ is \textsl{quasi-geometric} if, in addition to the preceding, the set $\ds \left\lbrace \frac{t_{k+1}}{t_k} : k \in \Z_{\geq 1} \right\rbrace$ is bounded. 
\end{defn}


\begin{thm}\label{UniformMain}
Let $G$ be a closed subgroup of $\ASL$, let $\Gamma$ be as in \equ{gamma}, and let ${{\mathcal{P}}}$ be a $\Gamma$-invariant subset of $\Z^n.$ Let $\ds t_\bullet = \left(t_k\right)_{k \in \Z_{\geq 1}}$ be any strictly increasing sequence of elements of $\R_{>0}$ with $\ds \lim_{k \to \infty} t_k = \infty.$ Suppose $G$ is of $\mathcal{P}$-Siegel type, and suppose we are given $r \in \R_{> 1}$ for which $G$ is of $\ds \left(\mathcal{P}, r\right)$-Rogers type. Let $\ds \psi 
: \R_{\geq 0} \to \left(\R_{>0}\right)^{\ell}$ be Borel measurable, and let $\ds f 
: \R^n \to \R^{\ell}$ be subhomogeneous. Suppose also that there exists some norm $\eta$ on $\R^n$ for which \eq{conv}{\ds \sum_{k=1}^{\infty}  m (B_{f, \psi\left(t_k\right), \eta, t_k})^{1-r} < \infty.} {Let $\nu$ be an arbitrary norm on $\R^n$.} We then have the following. 
\begin{itemize}
    \item[\rm (i)] For 
    almost every $g \in G$ the function $f \circ g$ is $\ds t_\bullet$-uniformly $\ds (\psi, \nu, \mathcal{P})$-approximable. 
    \item[\rm (ii)] Suppose further that 
   $\ds \psi 
    $ is nonincreasing and regular, and the sequence $t_\bullet$ is quasi-geometric. Then for 
    almost every $g \in G$ the function $f \circ g$ is uniformly $\ds (\psi, \nu, \mathcal{P})$-approximable. 
\end{itemize}
\end{thm}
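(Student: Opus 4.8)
The overall plan is to obtain part (i) from the generic counting estimate Theorem \ref{genericcounting}(iii) after trading the $g$-dependent targets $B_{f\circ g,\psi(t_k),\nu,t_k}$ for a single, $g$-independent family of subsets of $\R^n$; part (ii) will then follow by re-running part (i) with a slightly shrunk approximating function.

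\emph{Part (i).} By $\sigma$-compactness of $G$ it suffices to fix a compact $K\subseteq G$ and prove the conclusion for $\mu_G$-almost every $g\in K$. Writing $g=\langle h,\zz\rangle$, set $D_K:=\max\bigl(1,\sup\{\|h^{-1}\|:\langle h,\zz\rangle\in K\}\bigr)$ and $E_K:=\sup\{\nu(\zz):\langle h,\zz\rangle\in K\}$, both finite by compactness. For every $k$ with $t_k>2D_KE_K$ put
\[
F_k:=\bigl\{\ww\in\R^n:\ |f(\ww)|\les\psi(t_k)\ \text{and}\ \nu(\ww)\le t_k/D_K-E_K\bigr\},
\]
and let $F_k$ be the closed $\nu$-ball of radius $1$ for the finitely many remaining $k$. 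The point of this choice is a one-line inclusion: if $\ww=g\vv\in g\mathcal{P}\cap F_k$ with $\vv\in\mathcal{P}$, then $|f(g\vv)|\les\psi(t_k)$ and $\nu(\vv)=\nu\bigl(h^{-1}(\ww-\zz)\bigr)\le D_K(\nu(\ww)+E_K)\le t_k$, so $\vv\in B_{f\circ g,\psi(t_k),\nu,t_k}\cap\mathcal{P}$. Hence it is enough that the lattice $\Lambda_{\mathcal{P}}=g\mathcal{P}$, with $\Lambda=g\Z^n$, meet $F_k$ for all large $k$, which by Theorem \ref{genericcounting}(iii) holds for $\mu_X$-almost every $\Lambda$, hence for $\mu_G$-almost every $g$, provided $0<m(F_k)<\infty$ and $\sum_k m(F_k)^{1-r}<\infty$. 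Finiteness of $m(F_k)$ is automatic since $F_k$ lies in a bounded ball, and positivity for large $k$ follows from \equ{conv} itself (otherwise the corresponding term of that series would be $+\infty$). For the summability I would bound $m(F_k)$ from below: by Lemma \ref{uniformscaling}(i) and subhomogeneity one has $F_k\supseteq B_{f,\psi(t_k),\nu,t_k/(2D_K)}\supseteq(2D_K)^{-1}B_{f,(2D_K)^d\psi(t_k),\nu,t_k}\supseteq(2D_K)^{-1}B_{f,\psi(t_k),\nu,t_k}$, and by Lemma \ref{uniformscaling}(ii), $m\bigl(B_{f,\psi(t_k),\nu,t_k}\bigr)\ge(C^*_{\nu,\eta})^{-n}m\bigl(B_{f,\psi(t_k),\eta,t_k}\bigr)$, so that $m(F_k)\ge c_K\,m(B_{f,\psi(t_k),\eta,t_k})$ for a constant $c_K>0$. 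Since $1-r<0$, raising this to the power $1-r$ turns \equ{conv} into $\sum_k m(F_k)^{1-r}<\infty$ (the finitely many exceptional terms being harmless), and a countable union over a compact exhaustion of $G$ finishes part (i).

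\emph{Part (ii).} Let $a=a_\psi$ and $b=b_\psi$ be regularity constants, where we may assume $b\le1$, and let $M$ bound $\{t_{k+1}/t_k\}_k$. Choose $j\in\Z_{\ge0}$ with $a^j\ge M$ and put $s:=b^{-j}\ge1$; iterating the regularity inequality and using that $\psi$ is nonincreasing gives $\psi(z/M)\les s\,\psi(z)$ for all $z>0$. Apply part (i) to the Borel function $s^{-1}\psi$ in place of $\psi$ and with the norm $\nu$: its hypothesis \equ{conv} follows from the one for $\psi$, since Lemma \ref{uniformscaling}(i) gives $m\bigl(B_{f,s^{-1}\psi(t_k),\nu,t_k}\bigr)\ge s^{-n/d}m\bigl(B_{f,\psi(t_k),\nu,t_k}\bigr)$, which is again comparable to $m\bigl(B_{f,\psi(t_k),\eta,t_k}\bigr)$ by Lemma \ref{uniformscaling}(ii). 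Thus for $\mu_G$-almost every $g$ there is $k_g$ with $B_{f\circ g,s^{-1}\psi(t_k),\nu,t_k}\cap\mathcal{P}\ne\varnothing$ for all $k\ge k_g$. Given such a $g$ and any sufficiently large $T$ (in particular $T\ge t_1$), let $k$ be the unique index with $t_k\le T<t_{k+1}$; then $k\ge k_g$ for $T$ large, and any $\vv\in B_{f\circ g,s^{-1}\psi(t_k),\nu,t_k}\cap\mathcal{P}$ satisfies $\nu(\vv)\le t_k\le T$ and, using $t_k\ge t_{k+1}/M>T/M$ with monotonicity, $|f(g\vv)|\les s^{-1}\psi(t_k)\le s^{-1}\psi(T/M)\les s^{-1}\cdot s\,\psi(T)=\psi(T)$. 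Hence $\vv\in B_{f\circ g,\psi(T),\nu,T}\cap\mathcal{P}$, i.e., $f\circ g$ is uniformly $(\psi,\nu,\mathcal{P})$-approximable.

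The step I expect to be most delicate is part (ii): one must coordinate the three sources of slack — $\psi$ nonincreasing, $\psi$ regular, and $t_\bullet$ quasi-geometric — so that interpolating from the discrete scales $t_k$ to all real $T$ costs only a fixed multiplicative constant $s$ on $\psi$, and then check that feeding the shrunk function $s^{-1}\psi$ back through part (i) does not destroy the convergence hypothesis \equ{conv}; this last verification is precisely where subhomogeneity of $f$, via Lemma \ref{uniformscaling}, is indispensable. Part (i) is comparatively routine, the only genuine care being the bookkeeping around the affine translation $\zz$ and arranging $0<m(F_k)<\infty$ so that Theorem \ref{genericcounting}(iii) applies verbatim.
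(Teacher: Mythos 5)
Your proposal is correct and follows essentially the same route as the paper: part (i) applies Theorem \ref{genericcounting}(iii) to a family of $K$-adjusted target sets (your $B_{f,\psi(t_k),\nu,t_k/D_K-E_K}$ versus the paper's $B_{f,\psi(t_k),\nu,t_k/(2D_K)}$ with a short contradiction argument), with Lemma \ref{uniformscaling} supplying the comparison to \equ{conv}; part (ii) applies (i) to the shrunk function $s^{-1}\psi=b^{j}\psi$ and interpolates between consecutive $t_k$ using monotonicity and regularity, exactly as in the paper. The remaining differences (direct triangle-inequality inclusion instead of assuming $K=K^{-1}$, and the intermediate inequality $\psi(z/M)\les s\psi(z)$) are only bookkeeping.
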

\begin{proof}  
\begin{itemize}
    \item[\rm (i)]  
  {Let $K$ be a  nonempty compact subset of $G$; as in the proof of Theorem \ref{mainapproxthmnew}, assume without loss of generality that $K = K^{-1}$. Define the constants $D_K$, $E_K$ by \equ{constants}. Lemma \ref{uniformscaling} and \equ{conv} imply that the series $\ds \sum_{k=1}^\infty  m (B_{f, \psi\left(t_k\right), \nu, 
  {t_k}/{(2D_K)}}) ^{1-r} 
  $ converges. Applying Theorem \ref{genericcounting}(iii), we obtain the following: For almost every $g \in G$ there exists $M_{g} \in{\Z_{\ge 1}}$ such that
  for each $k \in \Z$ with $k \geq M_g$  there exists some $\vv_k \in \mathcal{P}$ with 
  \eq{boundvk}{\ds 
   \nu\left(g \vv_k\right) \leq \frac{t_k}{2D_K}\quad\text{and}\quad\ds |f(g\vv_k)| \les \psi\left(t_k\right).}

{For each such $g \in G,$ we assume without loss of generality that for each $k \in \Z$ with $k \geq M_g,$ we have $\ds t_k > 2E_K$.}} {If, in addition, $g \in K$,
then for any} 
$\vv_k$ as above  it now follows from  \equ{boundvk} that we have $\nu( \vv_k) \leq {t_k}$. {Indeed,} if not, then we write  $g^{-1} = \langle h,\zz\rangle$, as in  \equ{affine}, and note that \begin{align*}\nu\big(h (g\vv_k) + \zz\big) = \nu( \vv_k)  > {t_k}\quad&\Longrightarrow\quad\nu\big(h (g\vv_k)\big)  > {t_k} - \nu(\zz) \ge {t_k} - E_K \ge \frac{t_k}{2}\\\ &\Longrightarrow\quad\nu( g\vv_k)  > 
\frac{t_k}{2D_K},\end{align*}  {which is a contradiction.}  
Therefore, for $\mu_G$-almost every $g \in K$ and with $M_g$ as above, it follows that for every $k \in \Z$ with $k \geq M_g$ there exists $\vv_k \in  \mathcal{P}$ with $\ds |(f \circ g)(\vv_k)| \les \psi\left(t_k\right)$ and $\ds 
\nu(\vv_k) \leq 
t_k.$ Hence,
for $\mu_G$-almost every $g \in {K}$, the function $f \circ g$ is $\ds t_\bullet$-uniformly $\ds (\psi, \nu, \mathcal{P})$-approximable. Since $G$ is $\sigma$-compact, the same holds for almost every $g \in G$.
 \smallskip
 
\item[\rm (ii)] Let $\ds a = a_\psi$ and $b = b_\psi$ be as in Definition \ref{basicapdefns}.  {Fix $j \in \Z_{\geq 1}$ for which $$ \sup\left\lbrace {t_{k+1} /t_k} : k \in \Z_{\geq 1} \right\rbrace < a^j\quad\text{and}\quad \sum_{k=1}^{\infty}  m (B_{f, b^j\psi\left(t_k\right), \eta, t_k}) ^{1-r} < \infty;$$ notice that this is indeed possible in light of Definition \ref{nicesequence}, Lemma \ref{uniformscaling}, and \equ{conv}. } By statement (i), we know that for almost every $g \in G,$ the function $f \circ g$ is $t_\bullet$-uniformly $\left(b^j\psi, \nu, \mathcal{P}\right)$-approximable. Now let $\ds h 
: \R^n \to \R^\ell$ be any function that is $t_\bullet$-uniformly $\left(b^j\psi, \nu, \mathcal{P}\right)$-approximable. Fix ${M} \in \Z_{\geq 1}$ such that for each $k \in \Z_{\geq {M} }$ the set $\ds B_{h, b^j\psi(t_k), \nu, t_k} \cap \mathcal{P}$ is nonempty. Let $\ds T \in \left( t_{{M} +2}, \infty\right)$ be arbitrary. Then there exists $i \in \Z_{\geq {M} +2}$ for which $\ds t_i \leq T < t_{i+1}.$ Note that there exists $\vv \in \mathcal{P}$ with $
    \nu(\vv) \leq t_i$ and $\ds |h(\vv)| \les b^j \psi(t_i).$ We then have $
     \nu(\vv) \leq t_i \leq T$ and $\ds |h(\vv)| \les b^j \psi(t_i) \les b^j b^{-j} \psi\left(a^j t_i \right) = \psi\left(a^j t_i \right) \les \psi\left(t_{i+1}\right) \les \psi(T).$ \end{itemize}\end{proof}  

\begin{proof}[Proof of Theorem \ref{simplemainapproxthm}] Theorem \ref{simplemainapproxthm} is now an immediate consequence of Theorems \ref{RogersEg}(ii), \ref{SL2}, \ref{mainapproxthmnew}, and \ref{UniformMain}(ii) {with  {$\mathcal{P} = \Z^n_{\ne0}$ and} $\ds t_\bullet = 
(2^k)_{k \in \Z_{\geq 1}}$}. 
\end{proof}

\begin{rmk} \rm
Denote by $Z_n$ the group of scalar $n\times n$ matrices (that is, the center of $\GL$). For any $G$ as in Theorem \ref{genericcounting} set $\widetilde{G} := G\times Z_n$; we have, for example, $\widetilde{\SL} = \GL$ and $\widetilde{\ASL} = \AGL$. It is then clear from the Fubini--Tonelli Theorem that every result of this section that was established for $G$ also holds, \textit{mutatis mutandis}, for $\widetilde{G}.$ The same remark applies to the corollaries derived in the next section. 
Alternatively, the $\mathrm{GL}_n(\R)$ analogue of our results follows easily from the corollary to Theorems 1 and 2 in \cite{Schmidt}, via an application of Lemma \ref{normimmmeas}. 
\end{rmk}

\begin{rmk} \rm 
{Let us note that the null and conull subsets of $G$ in each part of Theorem \ref{mainapproxthmnew} and in Theorem \ref{UniformMain}(ii) may be chosen independently of the norm. This is an immediate consequence of the facts that all norms on $\R^n$ are equivalent,  that any {positive} multiple of a norm is a norm, that $\Z_{\geq 1}$ is countable and unbounded, and that $\psi = \left( \psi_1, \dots , \psi_\ell \right) : \R_{\geq 0} \to \left(\R_{>0}\right)^\ell$ is assumed to be nonincreasing in the results that were just mentioned.} 
\end{rmk}

In the following section, we apply Theorems \ref{mainapproxthmnew} and \ref{UniformMain} to investigate the orbits of several specific subhomogeneous functions $f.$ We do so by performing several volume calculations.

\section{Examples and volume calculations}\label{examplesvol}  
{Let us state the conventions that will be in force throughout this section.} We shall let $G$ denote a closed subgroup of $\ASL$, {$n\in\Z_{\ge 2}$,} and $\mathcal{P}$ denote a $\Gamma$-invariant subset of $\Z^n,$ where $\Gamma$ is as \equ{gamma}. We shall assume $G$ is of $\mathcal{P}$-Siegel type and that there exists ${r} \in \R_{>1}$ for which $G$ is of $\ds \left(\mathcal{P}, {r}\right)$-Rogers type. 
We let $\eta$ denote an arbitrary norm on $\R^n,$ and we let $\psi : \R_{\geq 0} \to \R_{>0}$ denote an arbitrary nonincreasing and regular function.
%

\begin{cor}{\label{quadforms}}
Let $d \in \R_{\geq 1},$ and fix any $p, q \in \Z_{\geq 1}$ with $p+q = n.$ Define $f : \R^n \to \R$ by \equ{special}. Define the norm $\nu$ on $\R^n = \R^p \times \R^{q}$ by \eq{norm}{\nu\big( (\textbf{x},\textbf{y})\big) := \max\left(\|\xx\|_d,\|\yy\|_d\right),} where $\|\cdot\|_d$ denotes the $\ell^d$ norm on each of the spaces $\R^p$ and $\R^{q}.$ 
For each $k \in \{p, q\}$, we let ${v}_k$ denote the volume of the unit ball in $\R^k$ and let ${v}_k'$ denote the  volume of the  unit sphere in $\R^k$ {\rm (}each taken with respect to the  $\ell^d$ norm on $\R^k${\rm )}. Then the following hold. 

\begin{itemize}
    \item[\rm (i)] There exists some $M \in {\R_{\ge 1}}$ such that 
    for any $T\ge S\ge M$ 
    we have 
    \[
    \begin{aligned}
     &m\left(A_{f, \psi, \nu} \cap \{\xx \in \R^n : S \leq \nu(\xx) \leq T \} \right)  \\  & =  \int_S^T {z}^{n-1}\left[{v}_p {v}_q'  \left( 1 - \left(1 - \frac{\psi\left({z}\right)}{{z}^d}\right)^{p/d} \right) + {v}_q {v}_p'  \left( 1 - \left(1 - \frac{\psi\left({z}\right)}{{z}^d}\right)^{q/d} \right)\right]d{z}.\end{aligned}\]
    \item[\rm (ii)] For almost every $g \in G$ the function $f\circ g$ is \textup{(}resp., is not\textup{)} $(\psi, \eta, {{\mathcal{P}}})$-approximable if the integral \eq{correctone}{\int_1^{\infty} \psi({z}) {z}^{n-(d+1)} \,d{z}} is infinite \textup{(}resp.,  finite\textup{)}.
    
\item[\rm (iii)] 
Suppose that the series
\eq{iii}{ \begin{cases}\ds\sum_{k = 1}^\infty
 { \left( k\psi(2^k) \right)^{1-r}}  &  \text{ if }d = n  \\ \ds\sum_{k = 1}^\infty  
 {  \left( 2^{(n-d)k}\psi(2^k)  \right)^{1-r}} & \text{ if }d \ne n \end{cases}} \noindent  converges. Then for almost every $g \in G$ the function $f \circ g$ is uniformly $(\psi, \eta, \mathcal{P})$-approximable.






\end{itemize}   
\end{cor}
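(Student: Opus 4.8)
The plan is to derive all three parts from the volume formula of part~(i), combined with Theorems~\ref{mainapproxthmnew} and~\ref{UniformMain} and Lemma~\ref{normimmmeas}. For part~(i), write $\xx = (\uu,\yy) \in \R^p \times \R^q$ and set $\rho := \|\uu\|_d$, $\sigma := \|\yy\|_d$; since $f(\xx) = \rho^d - \sigma^d$ and $\nu(\xx) = \max(\rho,\sigma)$ depend only on $(\rho,\sigma)$, I would integrate in polar coordinates adapted to the $\ell^d$-norm on each factor, using $\int_{\R^k}\phi(\|\zz\|_d)\,dm(\zz) = v_k'\int_0^\infty \phi(\rho)\rho^{k-1}\,d\rho$ (so in particular $v_k' = k v_k$). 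Choosing $M := 1 + \psi(0)^{1/d} \in \R_{\ge 1}$ and using that $\psi$ is nonincreasing (hence $\le \psi(0)$) gives $\psi(\rho) \le \psi(0) < M^d \le \rho^d$ for all $\rho \ge M$. Decomposing the annulus $\{S \le \nu(\xx) \le T\}$ ($T \ge S \ge M$) along $\{\rho \ge \sigma\}$ and $\{\sigma \ge \rho\}$ (whose overlap is $m$-null): on $\{\rho \ge \sigma\}$ one has $\nu(\xx) = \rho$ and $|f(\xx)| = \rho^d - \sigma^d$, so $|f(\xx)| \le \psi(\nu(\xx))$ reads $\sigma^d \ge \rho^d - \psi(\rho)$, i.e.\ for fixed $\rho \in [S,T]$ the admissible $\sigma$ fill $[(\rho^d - \psi(\rho))^{1/d},\rho]$; the Fubini integral of $v_p'\rho^{p-1}v_q'\sigma^{q-1}$ over this set equals $\int_S^T v_p' v_q\,\rho^{n-1}\big(1 - (1-\psi(\rho)/\rho^d)^{q/d}\big)\,d\rho$, and the region $\{\sigma \ge \rho\}$ contributes the term obtained by interchanging $p$ and $q$. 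Adding the two gives the stated formula.

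For part~(ii), Lemma~\ref{normimmmeas} shows $m(A_{f,\psi,\eta})$ is finite iff $m(A_{f,\psi,\nu})$ is, so by Theorem~\ref{mainapproxthmnew}(i)--(ii) (the latter using the standing $(\mathcal P,r)$-Rogers hypothesis) it suffices to prove that $m(A_{f,\psi,\nu}) < \infty$ iff \equ{correctone} converges. Split $A_{f,\psi,\nu}$ at $\nu = M$: the inner part lies in the bounded set $\{\nu \le M\}$, hence has finite measure, while by part~(i) and monotone convergence the outer part has measure $\int_M^\infty z^{n-1}\big[v_p v_q'(1-(1-\psi(z)/z^d)^{p/d}) + v_q v_p'(1-(1-\psi(z)/z^d)^{q/d})\big]\,dz$. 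Since $\psi$ is bounded, $\psi(z)/z^d \to 0$, and the elementary fact that $1 - (1-t)^\alpha$ is comparable to $t$ on $[0,1/2]$ (with implied constants depending only on $\alpha > 0$) shows the bracketed integrand is comparable to $\psi(z)z^{n-d-1}$ for large $z$; hence the improper integral converges iff $\int_1^\infty \psi(z)z^{n-(d+1)}\,dz$ does.

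For part~(iii), I would apply Theorem~\ref{UniformMain} with the quasi-geometric sequence $t_\bullet = (2^k)_{k\ge 1}$ and the norm $\nu$ of the corollary in the role of ``$\eta$'' there; since $\psi$ is nonincreasing and regular, it remains only to verify $\sum_k m\big(B_{f,\psi(2^k),\nu,2^k}\big)^{1-r} < \infty$. Each such measure is finite (the set is bounded) and positive (it contains an $\ell^d$-ball about $\mathbf 0$). If $d > n$, the general term of the series in \equ{iii} tends to $+\infty$ because $1-r < 0$, so that hypothesis is never met and there is nothing to prove; assume then $d \le n$. With $\epsilon := \psi(T) \le \psi(0)$ and the substitution $u = \rho^d$, $v = \sigma^d$ as in part~(i), $m(B_{f,\epsilon,\nu,T})$ is a fixed positive multiple of $\iint_{|u-v|\le \epsilon,\ 0 \le u,v \le T^d} u^{p/d-1}v^{q/d-1}\,du\,dv$; restricting to $u \in [2\epsilon, T^d]$ and $v \in [u-\epsilon,u] \subseteq [u/2,u]$ (on which $v^{q/d-1}$ is comparable to $u^{q/d-1}$) bounds this below by a constant multiple of $\epsilon\int_{2\epsilon}^{T^d}u^{n/d-2}\,du$, which for all sufficiently large $T$ is $\gtrsim \psi(T)\,T^{n-d}$ if $d < n$ and $\gtrsim \psi(T)\log T$ if $d = n$, with constants independent of $T$. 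Hence $\sum_k m(B_{f,\psi(2^k),\nu,2^k})^{1-r}$ is dominated by finitely many finite terms plus a constant multiple of the series in \equ{iii}, so it converges, and Theorem~\ref{UniformMain}(ii) yields the conclusion.

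The only genuine work is the volume bookkeeping in part~(iii): one must make sure the lower bound for $m(B_{f,\psi(T),\nu,T})$ holds with a constant uniform in $T$, which forces one to discard the part of the region near the origin — where $\psi(T)$ need not be small compared with $T^d$ — by cutting the $u$-integral at $2\epsilon$. Once the polar-coordinate picture of part~(i) is in place, everything else is routine.
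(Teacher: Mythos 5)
Your proposal is correct and follows essentially the same route as the paper: the same split of the annulus along $\{\|\xx\|_d \ge \|\yy\|_d\}$ computed in $\ell^d$-polar coordinates for (i), the same comparison $1-(1-t)^{\alpha} \asymp t$ near $0$ for (ii), and for (iii) the same lower bound $m\left(B_{f,\psi(T),\nu,T}\right) \gtrsim \psi(T)T^{n-d}$ (resp.\ $\psi(T)\log T$ when $d=n$) fed into Theorem \ref{UniformMain} with $t_k = 2^k$, including the observation that the series \eqref{eq:iii} diverges when $d>n$. The only difference is cosmetic: in (iii) you estimate the measure directly via the substitution $u=\rho^d$, $v=\sigma^d$, whereas the paper reuses formula (i) applied to the constant function $\psi(T)$ together with its Taylor estimate \eqref{eq:taylor} --- both yield the same bound with constants uniform in $T$.
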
  

For the next example, we consider the space of products of $n$ linearly independent linear forms on $\R^n$.  In what follows, for each $j \in \Z_{\geq 0},$ we write $\log^{j}$ to denote the function $\R_{>0} \to \R$ given by $z \mapsto \left(\log z \right)^{j}.$ 

\begin{cor}\label{product} 
Define $f : \R^n \to \R$ by $f(x_1, \dots , x_n) := x_1 \cdots x_n$. Let $\nu$ denote the maximum norm on $\R^n.$ 
Then there exists  $M \in \R_{\geq 1}$ such that: 
\begin{itemize}
    \item[\rm (i)] 
    For any ${T}\ge {S} \ge M$     we have 
  \eq{measureexact}{ m\left(A_{f, \psi, \nu} \cap \{\xx \in \R^n : {S} \leq\nu(\xx) \leq {T} \} \right) = 2^n \, n \int_{S}^{T} \frac{\psi({z})}{{z}}
 { \left[ \sum_{i=0}^{n-2} \frac{1}{i!} \log^i\left(\frac{{z}^n}{\psi({z})}\right)\right]}
   \,d{z}.}
    
    \item[\rm (ii)] For almost every $g \in G$ the function $f\circ g$ is \textup{(}resp., is not\textup{)} $(\psi, \eta, {{\mathcal{P}}})$-approximable if the integral \[\int_1^{\infty} \frac{\psi({z})}{{z}}   \log^{n-2} \left(\frac{{z}^n}{\psi({z})} \right) \,d{z}\] is infinite \textup{(}resp.,  finite\textup{)}.  
    
\item[\rm (iii)] 
{Suppose} that the series
$${ \begin{cases}\ds\sum_{k = 1}^\infty
{ \big( k\psi(2^k) \big)^{1-r}}   &
\text{ if }n=2 \\ \ds\sum_{k = 
{1}}^\infty 
{  \left( 
\psi(2^k)
\log^{n-1}\big(2^k \psi(2^k)^{-
{1}/{n}}\big)
 \right)
^{1-r}}    &
\text{ if }n > 2
\end{cases}}$$
  %
converges. Then for almost every $g \in G$ the function $f \circ g$ is uniformly $(\psi, \eta, \mathcal{P})$-approximable.

\ignore{\smallskip
    
\item[\rm (iv)] If $s\in \R_{\geq 0},$ then it follows that for almost every $g \in G,$ the function $f\circ g$ is \textup{(}resp., is not\textup{)} $(\varphi_s, \eta, {{\mathcal{P}}})$-approximable if $s = 0$ \ \textup{(}resp., $s > 0$\textup{)}.

\smallskip
    
\item[\rm (v)] If $s\in \R_{\geq 0},$ then it follows that for almost every $g \in G,$ the function $f\circ g$ is \textup{(}resp., is not\textup{)} uniformly $(\varphi_s, \eta, {{\mathcal{P}}})$-approximable if $s = 0$ \ \textup{(}resp., $s > 0$\textup{)} }
\end{itemize}       
\end{cor}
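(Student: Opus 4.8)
The plan is to reduce everything to the two main theorems of \S\ref{zerofull} by carrying out the volume computation in (i), from which (ii) and (iii) follow by standard integral/series comparisons. First I would reduce to the case where the image of $f$ lies in $\R_{\ge 0}$; since $f(x_1,\dots,x_n)=x_1\cdots x_n$ and $\nu$ is the maximum norm, by symmetry under sign changes of coordinates it suffices to compute the volume of $\{\xx\in(\R_{>0})^n : \xx\in A_{f,\psi,\nu},\ S\le\nu(\xx)\le T\}$ and multiply by $2^n$. Fixing the value $z=\nu(\xx)=\max_i x_i\in[S,T]$, I would slice: the level set $\{\nu(\xx)=z\}\cap(\R_{>0})^n$ is (up to a set of measure zero) a union of $n$ faces, on each of which one coordinate equals $z$ and the remaining $n-1$ coordinates range over $(0,z)^{n-1}$; this contributes the factor $n$ and a base-$(n-1)$-dimensional box of side $z$. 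On such a face, the condition $|f(\xx)|\le\psi(z)$ becomes $\prod_{j=1}^{n-1} x_j \le \psi(z)/z$ with each $x_j\in(0,z)$. So the whole computation comes down to evaluating, for $0<\delta\le z^{n-1}$, the measure
\[
V_{n-1}(z,\delta):=m\Big(\big\{(x_1,\dots,x_{n-1})\in(0,z)^{n-1} : x_1\cdots x_{n-1}\le \delta\big\}\Big),
\]
with $\delta=\psi(z)/z$ (note $\psi$ nonincreasing and regular forces, for $z$ large, $\psi(z)/z\le z^{n-1}$, which is where the constant $M$ enters).

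The key computation is the evaluation of $V_{n-1}(z,\delta)$. Rescaling $x_j=z u_j$ turns it into $z^{n-1}W_{n-1}(\delta/z^{n-1})$ where $W_{m}(t):=m(\{\uu\in(0,1)^m : u_1\cdots u_m\le t\})$ for $t\in(0,1]$. The complement has measure $m(\{\uu\in(0,1)^m : u_1\cdots u_m> t\})$, which is a classical integral: using the substitution $u_j=e^{-s_j}$, this is $\int_{\{s_j>0,\ \sum s_j<-\log t\}} e^{-\sum s_j}\,d\ss = \sum_{i=0}^{m-1}\frac{1}{i!}(-\log t)^i\, t \cdot$ — more precisely one gets $m(\{u_1\cdots u_m>t\}) = t\sum_{i=0}^{m-1}\frac{(\log(1/t))^i}{i!}$ (by induction on $m$, integrating out one variable at a time, or by recognizing the incomplete-Gamma/Erlang tail). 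Hence $W_{m}(t)=1-t\sum_{i=0}^{m-1}\frac{(\log(1/t))^i}{i!}$. Substituting $m=n-1$, $t=\delta/z^{n-1}=\psi(z)/z^n$, multiplying back by $2^n$, the factor $n$, and the $z^{n-1}$ from rescaling, and then observing that after differentiating the cumulative volume in $z$ (or rather assembling $\int_S^T$ of the "shell density") the $1$ in $W_{n-1}$ integrates to a pure $z^{n-1}$ term whose contribution is absorbed once one writes things as an integral over $[S,T]$ of the density — one lands on the expression \equ{measureexact}, with the sum running to $n-2$ because the top term $i=n-1$ combines with a boundary term. (I would double-check the bookkeeping here: the cleanest route is to compute $m(A_{f,\psi,\nu}\cap\{\nu(\xx)\le T\})$ directly as $\int_0^T$ of the shell measure and note that the divergent-looking $z^{n-1}$ piece cancels against the full-box volume $(2T)^n$, leaving exactly the stated formula for the difference $m(\cdots\{S\le\nu\le T\})$.)

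For (ii): by Theorem \ref{simplemainapproxthm}(i) (equivalently Theorem \ref{mainapproxthmnew}), $f\circ g$ is $(\psi,\eta,\mathcal P)$-approximable for a.e.\ $g$ iff $m(A_{f,\psi,\nu})=\infty$, and by Lemma \ref{normimmmeas} the finiteness of this does not depend on the norm. From (i), $m(A_{f,\psi,\nu})$ is finite iff $\int_M^\infty z^{-1}\psi(z)\sum_{i=0}^{n-2}\frac1{i!}\log^i(z^n/\psi(z))\,dz<\infty$; since $\psi$ is nonincreasing, $z^n/\psi(z)$ is increasing and $\ge z^n$ for $z$ large, so $\log^i(z^n/\psi(z))\ge (\log z)^i$ and the top term $i=n-2$ dominates the sum up to a bounded factor, giving the stated integral criterion. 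For (iii): apply Theorem \ref{UniformMain}(ii) with $t_\bullet=(2^k)$ (quasi-geometric), so it suffices that $\sum_k m(B_{f,\psi(2^k),\eta,2^k})^{1-r}<\infty$; by Lemma \ref{uniformscaling} one may replace $\eta$ by $\nu$ (up to rescaling $\psi$ by a constant, harmless here). By (i) with $S$ fixed and $T=2^k$, $m(B_{f,\psi(2^k),\nu,2^k})$ is comparable to $2^{kn}$ times $W_{n-1}(\psi(2^k)/2^{kn})$ restricted appropriately — more directly, $m(B_{f,\varepsilon,\nu,T})=m(\{\nu(\xx)\le T,\ |f(\xx)|\le\varepsilon\})$, which the slicing computation evaluates (this time with no "shell", just the box $(0,T)^n$ intersected with $\{x_1\cdots x_n\le\varepsilon\}$) to a constant times $\varepsilon\sum_{i=0}^{n-1}\frac1{i!}\log^i(T^n/\varepsilon)$ when $n>2$, and to a constant times $\varepsilon\log(T^2/\varepsilon)$ when $n=2$. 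Plugging $\varepsilon=\psi(2^k)$, $T=2^k$, and again using the top term of the sum, gives $m(B_{f,\psi(2^k),\nu,2^k})\asymp \psi(2^k)\log^{n-1}(2^k\psi(2^k)^{-1/n})$ for $n>2$ and $\asymp k\psi(2^k)$ for $n=2$, so the stated series is exactly the $(1-r)$-th power sum, and Theorem \ref{UniformMain}(ii) applies.

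The main obstacle is the volume computation in (i): getting the constant $M$ and the exact range-of-validity right, correctly identifying the Erlang/incomplete-Gamma tail $m(\{u_1\cdots u_m>t\})=t\sum_{i<m}\frac{(\log(1/t))^i}{i!}$, and then carefully tracking how the "trivial" $z^{n-1}$ term cancels so that the difference $m(\cdots\cap\{S\le\nu\le T\})$ has the clean closed form with the sum truncated at $n-2$ rather than $n-1$. Everything downstream is routine asymptotic comparison of integrals and series using only that $\psi$ is nonincreasing and regular.
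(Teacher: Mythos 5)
Your reduction in (i) — slice by the value $z$ of the maximum norm so that everything comes down to the volume of $\big\{(x_1,\dots,x_{n-1})\in(0,z)^{n-1}:x_1\cdots x_{n-1}\le\psi(z)/z\big\}$, then rescale to the unit cube — is sound, and it is equivalent to the paper's iterated integral $I_{n-2}(z,\psi)$, which the paper evaluates by induction on the number of integrations rather than by your exponential substitution; parts (ii) and (iii) then follow exactly as in the paper (top logarithmic term dominates, Theorem \ref{mainapproxthmnew}, and Theorem \ref{UniformMain} with $t_k=2^k$). So the strategy is fine, and the Erlang-type substitution is a legitimate, arguably cleaner, alternative to the paper's induction.

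However, your key identity has the set and its complement interchanged, and as written the central step of (i) fails. With $L=\log(1/t)$ and $u_j=e^{-s_j}$ one has
\[
m\big(\{\uu\in(0,1)^m:u_1\cdots u_m> t\}\big)=\int_{\{s_j>0,\ \sum_j s_j<L\}}e^{-\sum_j s_j}\,ds=1-e^{-L}\sum_{i=0}^{m-1}\frac{L^i}{i!},
\]
so it is $W_m(t)=m\big(\{u_1\cdots u_m\le t\}\big)$, not its complement, that equals $t\sum_{i=0}^{m-1}\frac{1}{i!}\log^i(1/t)$ (the tail of the Gamma$(m,1)$ law, i.e.\ the probability that $\sum_j s_j\ge L$). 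Your version $W_m(t)=1-t\sum_{i=0}^{m-1}\frac{1}{i!}\log^i(1/t)$ would make the slice volume close to the full face volume $z^{n-1}$ when $\psi(z)/z^n$ is small, i.e.\ it computes the complement of the thin set inside the box; and the device you then invoke to land on \equ{measureexact} — the $z^{n-1}$ piece cancelling against the full-box volume $(2T)^n$, or "the top term $i=n-1$ combining with a boundary term" — does not exist: nothing is ever subtracted in $m\left(A_{f,\psi,\nu}\cap\{S\le\nu(\xx)\le T\}\right)$, and with $m=n-1$ there is no $i=n-1$ term in the sum to begin with. With the corrected identity the computation closes immediately: for $z\ge M$ (so that $\psi(z)\le z^n$) the slice volume is $\frac{\psi(z)}{z}\sum_{i=0}^{n-2}\frac{1}{i!}\log^i\big(z^n/\psi(z)\big)$, and multiplying by $2^n n$ and integrating over $[S,T]$ gives \equ{measureexact} with no bookkeeping at all. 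Note that in part (iii) you implicitly use the correct formula — your evaluation of $m(B_{f,\varepsilon,\nu,T})$ as a constant times $\varepsilon\sum_{i=0}^{n-1}\frac{1}{i!}\log^i(T^n/\varepsilon)$ is consistent with $W_n(t)=t\sum_{i=0}^{n-1}\frac{1}{i!}\log^i(1/t)$ — which makes the proposal internally inconsistent. Once this is fixed, (ii) and (iii) go through as you describe and agree with the paper's argument.
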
    

 {Notice that Corollary 4.2(ii) is 
similar to \cite[Theorem 1.11]{KM}; indeed, \cite[Theorem 1.11]{KM} implies Corollary 4.2(ii) in the special case that $G = \SL,$ $\mathcal{P} = \Z^n_{\neq 0}$ and $\eta$ is the maximum norm on $\R^n$.
Whereas the proof of Kleinbock\textendash Margulis in \cite{KM} relied on the Dani correspondence and the exponential mixing of the $\SL$-action on $\SL/{\rm SL}_n(\Z),$ our proof only uses the expectation and variance formulae of the Siegel transforms.}  
\smallskip 

The next example is of interest because of its relation to the Khintchine\textendash Groshev Theorem; {see Remark \ref{kgt}}.    

\begin{cor}\label{khintchine}
Let $\ell \in \{1, \dots , n - 1 \}$ and $\mathbf{a} = (a_1, \dots , a_\ell) \in \left(\R_{>0}\right)^\ell$ be given. Define $f: \R^n \to \R$ by \[f(x_1, \dots , x_n) := \max\left(|x_1|^{a_1}, \dots , |x_\ell|^{a_\ell}\right).\] Set $\ds a := \sum_{i=1}^\ell  a_i^{-1}.$ Let $\nu$ denote the maximum norm on $\R^n.$ Then:  
\begin{itemize}
    \item[\rm (i)] There exists some $M \in \R_{>0}$ such that for any ${T}\ge {S} \ge M$
    we have \[ m\left(A_{f, \psi, \nu} \cap \{\xx \in \R^n : {S} \leq\nu(\xx) \leq {T} \} \right) = 2^n (n-\ell) \int_{S}^{T} \psi({z})^a {z}^{n-(\ell+1)} \ d{z}. \]
    
    \item[\rm (ii)] For almost every $g \in G$ the function $f\circ g$ is \textup{(}resp., is not\textup{)} $(\psi, \eta, {{\mathcal{P}}})$-approximable if the integral 
    $${ \int_1^{\infty} \psi({z})^a {z}^{n - (\ell+1)} \,d{z}}$$ is infinite \textup{(}resp.,  finite\textup{)}.   
    
    \item[\rm (iii)] Suppose that $\ds \sum_{k = 1}^\infty 
    {  \left( 2^{(n-\ell)k}\psi(2^k) ^{{a}} \right)^{1-r}}
    $ converges. Then for almost every $g \in G$ the function $f \circ g$ is uniformly $(\psi, \eta, \mathcal{P})$-approximable.
    
\ignore{    \smallskip
    
    \item[\rm (iv)] Given any $s \in \R_{\geq 0},$ it follows that for almost every $g \in G,$ the function $f\circ g$ is \textup{(}resp., is not\textup{)} $(\varphi_s, \eta, {{\mathcal{P}}})$-approximable if $\ds s \leq \frac{n-k}{a}$ \ $\ds \left(\textsl{resp.,} \  s > \frac{n-k}{a}\right).$ 
    
    \item[\rm (v)] Given any $s \in \R_{\geq 0},$ it follows that for almost every $g \in G,$ the function $f\circ g$ is \textup{(}resp., is not\textup{)} uniformly $(\varphi_s, \eta, {{\mathcal{P}}})$-approximable if $\ds s < \frac{n-k}{a}$ \ $\ds \left(\textsl{resp.,} \  s > \frac{n-k}{a}\right).$   }     
\end{itemize}  
\end{cor}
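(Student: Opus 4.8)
The plan is to prove the exact volume identity (i) by a direct Fubini computation, and then to read off (ii) and (iii) as formal consequences of (i) together with the results of \S\ref{zerofull}. As a preliminary I would record that $f$ here is continuous, nonnegative (so $|f|=f$), and subhomogeneous with $d_f=\min_{1\le i\le\ell}a_i$, since for $t\in(0,1)$ one has $f(t\xx)=\max_i t^{a_i}|x_i|^{a_i}\le t^{d_f}f(\xx)$; hence $f$ satisfies the hypotheses of Lemmata \ref{normimmmeas} and \ref{uniformscaling}, of Theorem \ref{mainapproxthmnew}, and of Theorem \ref{UniformMain}.

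For (i): because $\psi$ is nonincreasing it is bounded on $[M,\infty)$ by $\psi(0)$, so I would fix $M\in\R_{\ge 1}$ with $\psi(0)^{1/a_i}\le M$ for every $i\le\ell$; then $\psi(\rho)^{1/a_i}\le\rho$ for all $\rho\ge M$ and all $i\le\ell$. The point of this choice is that for $T\ge S\ge M$ the defining constraint $|x_i|\le\psi(\nu(\xx))^{1/a_i}$ ($i\le\ell$) forces $|x_i|<\nu(\xx)$, so that up to a Lebesgue-null set $\nu(\xx)=\|(x_{\ell+1},\dots,x_n)\|_\infty=:\rho$. Integrating out $x_1,\dots,x_\ell$ then contributes the factor $\prod_{i=1}^\ell 2\psi(\rho)^{1/a_i}=2^\ell\psi(\rho)^a$ (using $\sum_i a_i^{-1}=a$), and integrating the remaining $n-\ell$ coordinates in "$\ell^\infty$-polar coordinates" — the $\ell^\infty$-ball of radius $\rho$ in $\R^{n-\ell}$ has volume $(2\rho)^{n-\ell}$, hence sphere element $2^{n-\ell}(n-\ell)\rho^{n-\ell-1}\,d\rho$ — yields $2^n(n-\ell)\int_S^T\psi(\rho)^a\rho^{n-(\ell+1)}\,d\rho$. \emph{This slicing is essentially the only computational content of the corollary}; its one delicate point is exactly the choice of $M$ ensuring that the constrained coordinates never realize the maximum norm.

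For (ii): the set $A_{f,\psi,\nu}\cap\{\nu\le M\}$ is bounded, hence of finite measure, so by (i) and monotone convergence $m(A_{f,\psi,\nu})<\infty$ if and only if $\int_M^\infty\psi(\rho)^a\rho^{n-(\ell+1)}\,d\rho<\infty$, which (as $\psi$ is bounded on $[1,M]$) is equivalent to convergence of $\int_1^\infty\psi(z)^a z^{n-(\ell+1)}\,dz$; by Lemma \ref{normimmmeas} this is in turn equivalent to $m(A_{f,\psi,\eta})<\infty$. Now invoke Theorem \ref{mainapproxthmnew} with both of its norms taken to be $\eta$: convergence of the integral gives, by part (i) there, that $f\circ g$ is not $(\psi,\eta,\mathcal{P})$-approximable for almost every $g$, while divergence gives, by part (ii) there (permissible since $G$ is of $(\mathcal{P},r)$-Rogers type by the standing assumptions of this section), that $f\circ g$ is $(\psi,\eta,\mathcal{P})$-approximable for almost every $g$.

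For (iii): I would take the quasi-geometric sequence $t_\bullet=(2^k)_{k\ge1}$. For $k$ large enough that $\psi(2^k)^{1/a_i}\le 2^k$ for all $i\le\ell$, the set $B_{f,\psi(2^k),\nu,2^k}$ is exactly the box $\prod_{i=1}^\ell[-\psi(2^k)^{1/a_i},\psi(2^k)^{1/a_i}]\times[-2^k,2^k]^{n-\ell}$, of volume $2^\ell\psi(2^k)^a(2^{k+1})^{n-\ell}=2^n\,2^{(n-\ell)k}\psi(2^k)^a$; hence $\sum_{k\ge1}m(B_{f,\psi(2^k),\nu,2^k})^{1-r}$ agrees, up to finitely many terms and the factor $2^{n(1-r)}$, with the series in the hypothesis, and so converges. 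Theorem \ref{UniformMain}, applied with the norm appearing in its convergence hypothesis taken to be $\nu$ and its arbitrary norm taken to be $\eta$, then gives $t_\bullet$-uniform $(\psi,\eta,\mathcal{P})$-approximability of $f\circ g$ for almost every $g$ by part (i), and — since $\psi$ is nonincreasing and regular and $t_\bullet$ is quasi-geometric — uniform $(\psi,\eta,\mathcal{P})$-approximability for almost every $g$ by part (ii). I anticipate no real difficulty beyond (i): the transition between the maximum norm used in the volume computations and the arbitrary norm $\eta$ of the statement is absorbed entirely into Lemmata \ref{normimmmeas} and \ref{uniformscaling}, which are already built into the cited theorems.
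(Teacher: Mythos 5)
Your proposal is correct and follows essentially the same route as the paper: a Fubini/slicing computation with respect to the maximum norm for the exact volume in (i), then Theorem \ref{mainapproxthmnew} (together with Lemma \ref{normimmmeas}) for (ii) and Theorem \ref{UniformMain} with $t_k=2^k$ for (iii). The only cosmetic differences are that you integrate the $\ell$ constrained coordinates as a single box of volume $2^\ell\psi(\rho)^a$, whereas the paper partitions according to which $|x_i|^{a_i}$ realizes the maximum and sums the resulting factors $\tfrac{1}{a_i a}$, and in (iii) you compute $m\bigl(B_{f,\psi(2^k),\nu,2^k}\bigr)$ exactly rather than bounding it from below via (i).
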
  

\ignore{Our final example is the following, a counterpart to the example considered in Corollary \ref{product}.  

\begin{cor}\label{distinctpowers} Let $\mathbf{a} = (a_1, \dots , a_n)$ be an arbitrary element of $\left(\R_{>0}\right)^n,$ and assume that the entries of $\textbf{a}$ are pairwise distinct. Define $\ds f: \R^n \to \R$ by \[ f(x_1, \dots , x_n) = \prod_{i=1}^n |x_i|^{a_i}. \] Let $\nu$ denote the maximum norm on $\R^n.$ 
For each $\ell \in \{1, \dots , n \},$ define $E_\ell := \{1, \dots , n \} \ssm \{\ell\}.$ 

\begin{itemize}
    \item[\rm (i)] There exists some $M \in \R_{>0}$ such that for any real numbers ${S}$ and ${T}$ with $M \leq {S} \le {T},$ we have \[ m\left(A_{f, \psi, \nu} \cap \{\xx \in \R^n : {S} \leq\nu(\xx) \leq {T} \} \right) = \sum_{\ell=1}^n \sum_{i \in E_\ell} \frac{2^n a_i^{n-2}}{\prod_{j \in E_\ell \ssm \{i\}} (a_i-a_j)} \int_{S}^{T} \left( \psi({z}) {z}^{(n-1)a_i - \sum_{p=1}^n a_p} \right)^{1/a_i} \ d{z}. \]    
    
    \item[\rm (ii)] For almost every $g \in G,$ the function $f\circ g$ is \textup{(}resp., is not\textup{)} $(\psi, \eta, {{\mathcal{P}}})$-approximable if \[ \max_{1 \leq i \leq n} \int_1^{\infty} \left( \psi({z}) {z}^{(n-1)a_i - \sum_{p=1}^n a_p} \right)^{{1}/{a_i}} \,d{z} \] is infinite \textup{(}resp.,  finite\textup{)}.   
    
\item[\rm (iii)] Suppose that
\[
 \sum_{k = 1}^\infty\left[\sum_{i=1}^n \left( \psi\left(2^k\right) \,  2 ^{k(na_i - \sum_{p=1}^n a_p)} \right)^{{1}/{a_i}} \right]^{-({r}-1)} \] converges. Then for almost every $g \in G,$ the function $f \circ g$ is uniformly $(\psi, \eta, \mathcal{P})$-approximable. 

\ignore{\smallskip
    
\item[\rm (iv)] Given any $s \in \R_{\geq 0},$ it follows that for almost every $g \in G,$ the function $f\circ g$ is \textup{(}resp., is not\textup{)} $(\varphi_s, \eta, {{\mathcal{P}}})$-approximable if $\ds s \leq n b - \sum_{p=1}^n a_p$ \ $\ds \left(\textsl{resp.,} \  s > n b - \sum_{p=1}^n a_p\right).$ 
    
    \item[\rm (v)] Given any $s \in \R_{\geq 0},$ it follows that for almost every $g \in G,$ the function $f\circ g$ is \textup{(}resp., is not\textup{)} uniformly $(\varphi_s, \eta, {{\mathcal{P}}})$-approximable if $\ds s < n b - \sum_{p=1}^n a_p$ \ $\ds \left(\textsl{resp.,} \  s > n b - \sum_{p=1}^n a_p\right).$   }      
\end{itemize} 
\end{cor}  }

Before proving these corollaries, let us make a few remarks. 
\begin{rmk}\label{explanations} \rm
\begin{itemize}
  \item[\rm (i)]  Corollary \ref{spequadforms} is clearly a special case of Corollary \ref{quadforms}.
  \smallskip
  
  \item[\rm (ii)]  {The $n=2$ case of Corollary \ref{product} coincides with the $n=d=2$ case of Corollary \ref{quadforms}; however, the two volume formulas are slightly different due to the difference in the  choice of norms.}
 \smallskip
  
   \item[\rm (iii)]  
   In each corollary, part (i) does not require the regularity of $\psi$ and is valid for any nonincreasing function $\psi : \R_{\geq 0} \to \R_{>0}.$
 \smallskip
  
   \item[\rm (iv)]  
   In each of the corollaries, parts (ii) and (iii) may be used to calculate the critical exponents for asymptotic and uniform approximability, respectively: that is, the supremum of the set of all $s \in \R_{\geq 0}$ such that almost every element in the $G$-orbit of $f$ is $(\varphi_s,\nu,{\mathcal P})$-approximable or uniformly $(\varphi_s,\nu,{\mathcal P})$-approximable, respectively, {where $\varphi_s$ is as in \eqref{psis}}. In each corollary, one readily obtains that this supremum, {if finite}, is actually a maximum in the case of asymptotic approximation, and also that the critical exponents for asymptotic and uniform approximability coincide.  
 \smallskip
  
 \item[\rm (v)]  
 Instead of using the sequence $\left(2^k\right)
 $ in part (iii) of each corollary, one may instead use any quasi-geometric sequence $\ds 
  (t_k)
  $ that in addition is  {\sl lacunary}; that is, 
 $\ds \inf \left\lbrace \frac{t_{k+1}}{t_{k}} : k \in \Z_{\geq 1} \right\rbrace > 1.$ 
In fact, it is not hard to prove that if $F : \R_{>0} \to \R_{>0}$ is any Borel measurable function that satisfies some additional mild conditions, then the following are equivalent: 

\begin{itemize}
    \item[(a)] There exists a quasi-geometric sequence $\ds 
    \left(t_k\right)_{k \in \Z_{\geq 1}}$ for which $\ds \sum_{k=1}^{\infty} F(t_k) < \infty.$
    
    \item[(b)] 
    $\ds \int_1^{\infty} \frac{F(x)}{x} \ dx < \infty.$
    
    \item[(c)]  
     $\ds \sum_{k=1}^{\infty} F(t_k) < \infty$ for any quasi-geometric and lacunary sequence $\left(t_k\right)_{k \in \Z_{\geq 1}}$. 
\end{itemize}
\end{itemize}
\end{rmk}    

\noindent {Let us now proceed to prove the corollaries.} 

\begin{proof}[Proof of Corollary \ref{quadforms}] 
Recall that the function $f : \R^n \to \R$ is defined by \equ{special}; that is, $f\big( (\textbf{x},\textbf{y})\big) = \|\xx\|_d^d - \|\yy\|_d^d$. 
For any 
{$T\ge S \ge 0$} and with the norm $\nu$ on $\R^n = \R^p \times \R^{q}$ given by 
\equ{norm},
 we define
\begin{align*}
A_{{S}}^{{T}} &:= \left\lbrace(\xx, \yy) \in 
A_{f, \psi, \nu} 
 : 
 {S} \leq \nu\left( (\xx, \yy) \right) \leq {T} \right\rbrace, \\
_{\xx}A_{{S}}^{{T}} &:= A_{{S}}^{{T}} \cap \left\lbrace(\xx, \yy) \in \R^p \times \R^q : \|\yy\|_d \leq \|\xx\|_d \right\rbrace, \\
_{\yy}A_{{S}}^{{T}} &:= A_{{S}}^{{T}} \cap \left\lbrace(\xx, \yy) \in \R^p \times \R^q : \|\xx\|_d \leq \|\yy\|_d \right\rbrace.
\end{align*}    

Since the function $\R_{>0} \to \R$ given by ${z} \mapsto {z}^d - \psi({z})$ is strictly increasing and unbounded from above, there exists 
$M \in {\R_{\ge 1}}$ such that for each ${z} \in [M, \infty),$ we have ${z}^d - \psi({z}) > 0.$ Now 
{suppose that $T\ge S \ge M$.} Then
\begin{align*}
m\left(_{\xx}A_{{S}}^{{T}}\right) &= m\left(\left\lbrace (\xx, \yy) \in \R^p \times \R^q : \sqrt[d]{\|\xx\|_d^d - \psi\left(\|\xx\|_d\right)} \leq \|\yy\|_d \leq \|\xx\|_d \ \ \text{and} \ \ {S} \leq \|\xx\|_d \leq {T} \right\rbrace\right) \\
&= {v}_q \int_{\{\xx \in \R^p : {S} \leq \|\xx\|_d \leq {T}\}}  \left( \|\xx\|_d^q - \left(\|\xx\|_d^d - \psi\left(\|\xx\|_d\right)\right)^{{q}/{d}} \right) \, d{\xx} \\
&={v}_q {v}_p'  \int_{S}^{T} {z}^{p-1} \left( {z}^q - \left({z}^d - \psi\left({z}\right)\right)^{{q}/{d}} \right) \, d{z} \\
&= {v}_q {v}_p' \int_{S}^{T} {z}^{n-1} \left( 1 - \left(1 - \frac{\psi\left({z}\right)}{{z}^d}\right)^{{q}/{d}} \right) \, d{z}.
\end{align*}
By symmetry, we have \[ m\left(_{\yy}A_{{S}}^{{T}}\right) = {v}_p {v}_q' \int_{S}^{T} {z}^{n-1} \left( 1 - \left(1 - \frac{\psi\left({z}\right)}{{z}^d}\right)^{{p}/{d}} \right) \, d{z},\]
which completes the proof of (i). 
\smallskip

By using the Taylor expansions of the functions $x\mapsto 1 - \left(1 - x\right)^{{p}/{d}}$ and $x\mapsto 1 - \left(1 - x\right)^{{q}/{d}}$ around $0$, it is easy to see that 
there exist some $S \in {\R_{\ge M}}$ 
and $C_1, C_2 \in \R_{>0}$ such that 
{\eq{taylor}{z \in [S, \infty)\ \Longrightarrow\ 
\left\{1 - \left(1 - \frac{\psi\left({z}\right)}{{z}^d}\right)^{{p}/{d}},
1 - \left(1 - \frac{\psi\left({z}\right)}{{z}^d}\right)^{{q}/{d}}\right\}
\subset \left[C_1\frac{\psi\left({z}\right)}{{z}^d}, C_2\frac{\psi\left({z}\right)}{{z}^d}\right].}}

\ignore{For each $r \in [{S}, \infty),$ we have $\ds 0 < \frac{\psi(r)}{r^d} \leq  \frac{\psi({S})}{R^d} \leq \frac{\psi(r_0)}{r_0^d} < 1.$ It follows that for any $\alpha \in \R_{>0},$ we have
\begin{align*}
&\int_{S}^{T} r^{n-1} \left( 1 - \left(1 - \frac{\psi\left(r\right)}{r^d}\right)^\alpha \right) dr \\
&= \int_{S}^{T} r^{n-1} \left( \sum_{k=1}^\infty \binom{\alpha}{k} (-1)^{k+1} \left(\frac{\psi(r)}{r^d}\right)^k \right) dr \\
&= \int_{S}^{T} \psi(r) r^{n-(d+1)} \left( \sum_{k=1}^\infty \binom{\alpha}{k} (-1)^{k+1} \left(\psi(r)\right)^{k-1} r^{-d(k-1)} \right) dr.
\end{align*}

It is well-known that the set \[ \left\lbrace \absval{\binom{\alpha}{k}} : k \in \Z_{\geq 0} \right\rbrace \] is bounded.

Let $\varepsilon > 0$ be given. For any sufficiently large $r \geq r_0,$ we then have \[ \absval{\sum_{k=2}^\infty \binom{\alpha}{k} (-1)^{k+1} \left(\psi(r)\right)^{k-1} r^{-d(k-1)}} \leq \varepsilon.\]
} It is thus a consequence of (i) and \equ{taylor} 
that $m(A_{f, \psi, \nu} ) = \infty$ if and only if  the integral \equ{correctone} is infinite; 
hence, statement (ii)  follows from Theorem \ref{mainapproxthmnew}. 
As for (iii), note that the series \equ{iii} diverges when $d > n$; thus, let us assume $d \leq n$. 
Using (i) and \equ{taylor}, we infer that there exists some $C \in \R_{>0}$ such that for any $T \in [S, \infty),$ we have 
\[
\begin{aligned}
m\left(B_{f,\psi(T),\nu,T}\right) &= m\left(\left\lbrace \xx \in \R^n : |f(\xx)| \le  {\psi(T)} \ \text{and} \  
 \nu(\xx) \leq T \right\rbrace\right) \\  
&\ge m\left(A_{f, \psi(T), \nu} \cap \{\xx \in \R^n : S \leq \nu(\xx) \leq T \} \right) \\  
&\ge C \psi(T) \int_{S}^{T} {z}^{n-(d+1)} \,d{z}  \\  
& = 
 \begin{cases}\ds C \psi(T)  (\log T - \log S) &\text{ if }d = n,\\ \ds (n-d)^{-1} C \psi(T)\left( T^{n-d} - S^{n-d}\right)&\text{ if }d < n.
\end{cases}
\end{aligned}
\]
Thus, there exists some $C' \in \R_{>0}$ such that for each sufficiently large $T \in \R_{>0}$, we have 
\[
m\left(B_{f,\psi(T),\nu,T}\right) \ge  
 \begin{cases}\ds C' \psi(T)  \log T  &\text{ if }d = n,\\ \ds C' \psi(T) T^{n-d} &\text{ if }d < n.
\end{cases}
\]
Letting $T = t_k = 2^k$ for each sufficiently large $k \in \Z_{\geq 1}$ and applying Theorem \ref{UniformMain} implies (iii). 
\end{proof}  

\begin{proof}[Proof of Corollary \ref{product}] 
{Recall that $\nu$ is the maximum norm on $\R^n$ and $f(\xx) = x_1\cdots x_n$.
For each $k \in \{{0}, \dots , n - 2 \}$ and $z \in \R_{>0},$ define 
\[ I_k(z,\psi) := \underbrace{\int_{0}^z \dots \int_0^z}_{k \ \mathrm{times}} \min\left( z, \frac{\psi(z)}{z\prod_{i=1}^{k} y_i}\right) \,dy_{1} \cdots dy_{k}; \] 
when $k=0$ here, there is no integration, and the empty product $\prod_{i=1}^{0} y_i$ is equal to $1$ by convention.
 It is easy to see that 
 the left hand side of  \equ{measureexact} is equal to
$\ds 2^n \, n \int_{S}^{T} I_{n-2}(z) \,dz$. 
For each $k$ and $z$ as above, we now establish the following explicit formula: 
 \eq{explicit}{ \begin{aligned}I_k(z,\psi) &= \min\left(z^{k+1},\frac{\psi(z)}{z} \right)  \sum_{i=0}^{k} \frac{1}{i!} \max\left(0,\log\Big(\frac{z^{k+2}}{\psi(z)}\Big)\right)^i  \\ &=  \begin{cases}\ds z^{k+1}&\text{ if }\psi(z)\ge z^{k+2};\\ \ds\frac{\psi(z)}{z}   \sum_{i=0}^{k} \frac{1}{i!} \log^i\Big(\frac{z^{k+2}}{\psi(z)}\Big)&\text{ otherwise.}\end{cases} \end{aligned}} (Here, the second equality is obvious.) In this formula and in the remainder of this proof, we adopt the convention $0^0 = 1.$ We now prove this formula by induction on $k.$ The base case $k=0$ is clear. Now suppose that for some $k \in \{1, \dots , n-3\},$ we have
\[
 I_{k-1}(z,\psi) = \min\left(z^{k},\frac{\psi(z)}{z} \right)  \sum_{i=0}^{k-1} \frac{1}{i!} \max\left(0,\log\Big(\frac{z^{k+1}}{\psi(z)}\Big)\right)^i;
\]
then
\eq{induction}{ \begin{aligned}I_k(z,\psi) &= \int_{0}^z  I_{k-1}\left(z,\tfrac1y\psi\right) \,dy \\ &= \int_{0}^z \min\left(z^{k},\frac{\psi(z)}{yz} \right)  \sum_{i=0}^{k-1} \frac{1}{i!} \max\left(0,\log\Big(\frac{yz^{k+1}}{\psi(z)}\Big)\right)^i \,dy.\end{aligned}}
Consider first the case $\psi(z)\ge z^{k+2}$. Then for any $0< y \le z,$ one has $$ z^k \le  \frac{\psi(z)}{yz} \ \Longleftrightarrow\ \frac{yz^{k+1}}{\psi(z)}\le 1;$$ and \equ{induction} gives 
$\ds I_k(z,\psi) = \int_{0}^z z^k \,dy = z^{k+1}$. If $\ds \psi(z) < z^{k+2},$ then
\begin{equation*}\begin{split}
I_k(z,\psi) &= \int_{0}^{\frac{\psi(z)}{z^{k+1}}} z^{k}  \,dy +  \int_{\frac{\psi(z)}{z^{k+1}}}^z \frac{\psi(z)}{yz}  \sum_{i=0}^{k-1} \frac{1}{i!} \log^i\Big(\frac{yz^{k+1}}{\psi(z)}\Big) \,dy\\
& = \frac{\psi(z)}{z}\left[ 1 +   \sum_{i=0}^{k-1} \frac{1}{i!}  \int_{\frac{\psi(z)}{z^{k+1}}}^z\log^i\Big(\frac{yz^{k+1}}{\psi(z)}\Big)\,\frac{dy}y\right] \\
& = \frac{\psi(z)}{z}\left[ 1 +   \sum_{i=0}^{k-1} \frac{1}{i!} \left.\frac1{i+1}\log^{i+1}\Big(\frac{yz^{k+1}}{\psi(z)}\Big)\right|^z_{\frac{\psi(z)}{z^{k+1}}}\right] \\
&= \frac{\psi(z)}{z}   \sum_{i=0}^{k} \frac{1}{i!} \log^i\Big(\frac{z^{k+2}}{\psi(z)}\Big).
\end{split}\end{equation*} 
This proves  \equ{explicit}. Now fix $M \in \R_{\geq 1}$ such that for each $z \in \R_{\geq M},$ we have $\psi(z) < z^n$. Then  \equ{explicit} implies that for each $z \in \R_{\geq M},$ we have $\ds I_{n-2}(z,\psi) = \frac{\psi(z)}{z}   \sum_{i=0}^{n-2} \frac{1}{i!} \log^i\Big(\frac{z^{n}}{\psi(z)}\Big)$. {This establishes \equ{measureexact} for any ${T}\ge {S} \ge M$ and} proves (i).
 }
 


\ignore{Suppose now $n \geq 3.$ For each $k \in \{1, \dots , n - 1 \},$ define $\ds g_k : \left(\R_{>0}\right)^{n-k} \to \R_{>0}$ by \[ g_k(x_1, \dots , x_{n-k}) := \frac{\psi(x_1)}{x_1^k \prod_{i=2}^{n-k} x_i},\] where the empty product $\prod_{i=2}^{n-(n-1)} x_i$ is equal to $1$ by convention. {Note that 
when $k\le n-2$ we 
have the recurrence relation \[ g_k(x_1, \dots , x_{n-k}) = \frac{x_1}{x_{n-k}} g_{k+1}(x_1, \dots , x_{n-(k+1)}). \] }

For each $k \in \{1, \dots , n - 1\},$ define $\ds h_k : \left(\R_{>0}\right)^{n-k} \to \R_{>0}$ by \[ h_k(x_1, \dots , x_{n-k}) := \min\left( x_1, g_k(x_1, \dots , x_{n-k})\right). \]

For each $k \in \{1, \dots , n - 1 \},$ define $\delta_k : \left(\R_{>0}\right)^{n-k} \to \R_{>0}$ by  \[ \delta_k(x_1, \dots , x_{n-k}) := \begin{cases}
1, & \text{if } g_k(x_1, \dots , x_{n-k}) = h_k(x_1, \dots , x_{n-k}), \\
0, & \text{otherwise}
\end{cases}.\]

For each $k \in \{1, \dots , n - 2\}$ and each $(x_1, \dots , x_{{n-k}}) \in \left(\R_{>0}\right)^{n-k},$ note that $\ds \delta_k\left(x_1, \dots , x_{n-k}\right) = 1$ if and only if $\ds x_{n-k} \geq g_{k+1}\left(x_1, \dots , x_{n-(k+1)}\right).$


For each $k \in \{1, \dots , n - 2 \},$ define $I_k : \left(\R_{>0}\right)^{n-(k+1)} \to \R$ by \[ I_k\left(x_1, \dots , x_{n-(k+1)}\right) := \underbrace{\int_{(0, x_1]} \dots \int_{(0, x_1]}}_{k \ \mathrm{times}} \min\left( x_1, \frac{\psi(x_1)}{\prod_{i=1}^{n-1} x_i}\right) \,dx_{n-1} \dots dx_{n-k}. \] Note that the measure of $A$ is equal to $\ds \int_{S}^{T} I_{n-2}(z) \,dz.$ We now prove that for each $k \in \{1, \dots , n - 2 \}$ and each $\left(x_1, \dots , x_n\right) \in A,$ we have 
\eq{star}{\begin{aligned} I_k\left(x_1, \dots , x_{n-(k+1)}\right) &=  x_1^k h_{k+1}\left(x_1, \dots , x_{n-(k+1)}\right) \\
&+ \delta_{k+1}\left(x_1, \dots , x_{n-(k+1)}\right) \frac{\psi(x_1)}{\prod_{i=1}^{n-(k+1)}x_i} \left( \sum_{i=1}^k \frac{1}{i!} \log^i\left( \frac{x_1^{k+1}}{\left(\frac{\psi(x_1)}{\prod_{i=1}^{n-(k+1)}x_i}\right)}\right)\right). \end{aligned}}
 We shall prove this formula 
 by induction on $k \in \{1, \dots , n - 2 \}.$

Let $\left(x_1, \dots , x_n\right) \in A$ be given. We have $\ds \min\left( x_1, \frac{\psi(x_1)}{\prod_{i=1}^{n-1} x_i}\right) = x_1$ if and only if $\ds x_{n-1} \leq g_2(x_1, \dots , x_{n-2})$ if and only if $\ds x_{n-1} \leq h_2(x_1, \dots , x_{n-2}).$

We have $\ds \min\left( x_1, \frac{\psi(x_1)}{\prod_{i=1}^{n-1} x_i}\right) = \frac{\psi(x_1)}{\prod_{i=1}^{n-1} x_i}$ if and only if $\ds x_{n-1} \geq g_2(x_1, \dots , x_{n-2})$ if and only if \\
$\ds g_2(x_1, \dots , x_{n-2}) \leq x_{n-1} \leq x_1.$
The set $\{ (w_1, \dots , w_{n-2}) \in \left(\R_{>0}\right)^{n-2} : g_2(w_1, \dots , w_{n-2}) \leq w_{n-1} \leq w_1 \}$ is nonempty if and only if there exists $(v_1, \dots , v_{n-2}) \in \left(\R_{>0}\right)^{n-2}$ for which $\delta_2\left(v_1, \dots , v_{n-2}\right) = 1.$  \\     

It follows      
\begin{align*}
I_1\left(x_1, \dots, x_{n-2}\right) &= \left( \int_0^{h_2(x_1, \dots , x_{n-2})} x_1 \,dx_{n-1} \right) + \left( \int_{g_2(x_1, \dots , x_{n-2})}^{x_1} \frac{\psi(x_1)}{\prod_{i=1}^{n-1} x_i}\delta_2(x_1, \dots , x_{n-2}) dx_{n-1} \right) \\
&= x_1 h_2(x_1, \dots , x_{n-2}) + \delta_2(x_1, \dots , x_{n-2})\frac{\psi(x_1)}{\prod_{i=1}^{n-2} x_i} \left( \int_{g_2(x_1, \dots , x_{n-2})}^{x_1} \frac{1}{x_{n-1}} \,dx_{n-1} \right) \\
&= x_1 h_2(x_1, \dots , x_{n-2}) + \delta_2(x_1, \dots , x_{n-2})\frac{\psi(x_1)}{\prod_{i=1}^{n-2} x_i} \log\left(\frac{x_1}{g_2(x_1, \dots , x_{n-2})}\right) \\
&= x_1 h_2(x_1, \dots , x_{n-2}) + \delta_2(x_1, \dots , x_{n-2})\frac{\psi(x_1)}{\prod_{i=1}^{n-2} x_i} \log\left(\frac{x_1}{g_2(x_1, \dots , x_{n-2})}\right) \\
&= x_1 h_2(x_1, \dots , x_{n-2}) + \delta_2(x_1, \dots , x_{n-2})\frac{\psi(x_1)}{\prod_{i=1}^{n-2} x_i} \log\left( \frac{x_1^2}{\left(  \frac{\psi(x_1)}{\prod_{i=1}^{n-2} x_i}   \right)}\right).
\end{align*} This proves the base case. Suppose now that $n \geq 4,$ and suppose as an induction hypothesis that for some $k \in \{1, \dots , n - 3 \}$ and each $\left(x_1, \dots , x_n\right) \in A,$ the number $I_k\left(x_1, \dots , x_{n-(k+1)}\right)$ is equal to
\begin{align*}
x_1^k h_{k+1}\left(x_1, \dots , x_{n-(k+1)}\right) + \delta_{k+1}\left(x_1, \dots , x_{n-(k+1)}\right) \frac{\psi(x_1)}{\prod_{i=1}^{n-(k+1)} x_i} \left( \sum_{i=1}^k \frac{1}{i!} \log^i\left( \frac{x_1^{k+1}}{\left(\frac{\psi(x_1)}{\prod_{i=1}^{n-(k+1)} x_i}\right)}\right)\right).
\end{align*}
\ignore{ There exists a measurable subset $B$ of $A$ such that $A \ssm B$ has measure zero and such that for each $\left(x_1, \dots , x_n\right) \in B,$ the following conditions are equivalent:
\begin{itemize}
\item[(i)]
$h_{k+1}\left(x_1, \dots , x_{n-(k+1)}\right) = x_1.$
\item[(ii)]
$\delta_{k+1}\left(x_1, \dots , x_{n-(k+1)}\right) = 0.$
\item[(iii)]
$0 < x_{n-(k+1)} \leq h_{k+2}\left(x_1, \dots , x_{n-(k+2)}\right).$
\end{itemize} It follows that for each $(x_1, \dots , x_n) \in B,$ the following conditions are equivalent:
\begin{itemize}
\item[(ii')]
$\delta_{k+1}\left(x_1, \dots , x_{n-(k+1)}\right) = 1.$
\item[(iii')]
$h_{k+2}\left(x_1, \dots , x_{n-(k+2)}\right) < x_{n-(k+1)} \leq x_1.$
\item[(iv')]
$g_{k+2}\left(x_1, \dots , x_{n-(k+2)}\right) < x_{n-(k+1)} \leq x_1.$
\end{itemize}}

For any $\ds \xx = \left(x_1, \dots , x_n\right) \in A,$ let $\delta_\xx^* := \delta_{k+2}\left(x_1, \dots , x_{n-(k+2)}\right)$; we then have 
\begin{align*}
& I_{k+1}\left(x_1, \dots , x_{n-(k+2)}\right) - x_1^{k+1} h_{k+2}\left(x_1, \dots , x_{n-(k+2)}\right) \\
&= \left( \int_{(0, x_1]} I_k\left(x_1, \dots , x_{n-(k+1)}\right) \,dx_{n-(k+1)} \right) - x_1^{k+1} h_{k+2}\left(x_1, \dots , x_{n-(k+2)}\right) \\
&= \delta_\xx^*\int_{g_{k+2}\left(x_1, \dots , x_{n-(k+2)}\right)}^{x_1} \left( x_1^k g_{k+1}\left(x_1, \dots , x_{n-(k+1)}\right) + \frac{\psi(x_1)}{\prod_{i=1}^{n-(k+1)} x_i} \left( \sum_{i=1}^k \frac{1}{i!} \log^i\left( \frac{x_1^{k+1}}{\left(\frac{\psi(x_1)}{\prod_{i=1}^{n-(k+1)} x_i}\right)}\right)\right)\right) \,dx_{n-(k+1)} \\
&= \delta_\xx^*\int_{g_{k+2}\left(x_1, \dots , x_{n-(k+2)}\right)}^{x_1} \left( \frac{\psi(x_1)}{\prod_{i=1}^{n-(k+1)} x_i} + \frac{\psi(x_1)}{\prod_{i=1}^{n-(k+1)} x_i} \left( \sum_{i=1}^k \frac{1}{i!} \log^i\left( \frac{x_1^{k+1}}{\left(\frac{\psi(x_1)}{\prod_{i=1}^{n-(k+1)} x_i}\right)}\right)\right)\right) \,dx_{n-(k+1)} \\
&= \delta_\xx^*\frac{\psi(x_1)}{\prod_{i=1}^{n-(k+2)}x_i}\int_{g_{k+2}\left(x_1, \dots , x_{n-(k+2)}\right)}^{x_1} \left( \frac{1}{x_{n-(k+1)}} + \frac{1}{x_{n-(k+1)}} \left( \sum_{i=1}^k \frac{1}{i!} \log^i\left( \frac{x_1^{k+1}}{\left(\frac{\psi(x_1)}{\prod_{i=1}^{n-(k+1)} x_i}\right)}\right)\right)\right) \,dx_{n-(k+1)} \\
&= \delta_\xx^*\frac{\psi(x_1)}{\prod_{i=1}^{n-(k+2)}x_i}\int_{g_{k+2}\left(x_1, \dots , x_{n-(k+2)}\right)}^{x_1} \left( \frac{1}{x_{n-(k+1)}} + \frac{1}{x_{n-(k+1)}} \left( \sum_{i=1}^k \frac{1}{i!} \log^i\left(\frac{x_1}{g_{k+1}\left(x_1, \dots , x_{n-(k+1)}\right)}\right)\right)\right) \,dx_{n-(k+1)} \\
&= \delta_\xx^*\frac{\psi(x_1)}{\prod_{i=1}^{n-(k+2)}x_i}\int_{g_{k+2}\left(x_1, \dots , x_{n-(k+2)}\right)}^{x_1} \left( \frac{1}{x_{n-(k+1)}} + \frac{1}{x_{n-(k+1)}} \left( \sum_{i=1}^k \frac{1}{i!} \log^i\left(\frac{x_{n-(k+1)}}{g_{k+2}\left(x_1, \dots , x_{n-(k+2)}\right)}\right)\right)\right) \,dx_{n-(k+1)} \\
&= \delta_\xx^*\frac{\psi(x_1)}{\prod_{i=1}^{n-(k+2)}x_i} \left[ \log\left(\frac{x_1}{g_{k+2}\left(x_1, \dots , x_{n-(k+2)}\right)} \right) + \left(\sum_{i=1}^k \frac{1}{i!} \frac{1}{i+1} \log^{i+1}\left(\frac{x_1}{g_{k+2}\left(x_1, \dots , x_{n-(k+2)}\right)} \right)\right) \right] \,dx_{n-(k+1)} \\
&= \delta_\xx^*\frac{\psi(x_1)}{\prod_{i=1}^{n-(k+2)}x_i} \left( \sum_{i=1}^{k+1} \frac{1}{i!} \log^{i}\left(\frac{x_1}{g_{k+2}\left(x_1, \dots , x_{n-(k+2)}\right)} \right)   \right),
\end{align*}
where we used the change of variables $\ds u := \frac{x_{n-(k+1)}}{g_{k+2}\left(x_1, \dots , x_{n-(k+2)}\right)}$ to calculate the more complicated integral.

Finally, note that
\begin{align*}
\frac{x_1}{g_{k+2}\left(x_1, \dots , x_{n-(k+2)}\right)} = x_1 \cdot \frac{x_1^{k+2} \prod_{i=2}^{n-(k+2)} x_i}{\psi(x_1)} = \frac{x_1^{k+2} \prod_{i=1}^{n-(k+2)} x_i}{\psi(x_1)} = \frac{x_1^{k+2}}{\left( \frac{\psi(x_1)}{\prod_{i=1}^{n-(k+2)} x_i}\right)}.
\end{align*}

We have therefore proved \equ{star}. For each $z \in \R_{\geq {S}},$ we have $\ds \min\left(z, \frac{\psi(z)}{z^{n-1}}\right) = \frac{\psi(z)}{z^{n-1}}.$ Hence, \[ m(A) = \int_{S}^{T} \frac{\psi(z)}{z} \left[ 1 + \sum_{i=1}^{n-2} \frac{1}{i!} \log^i\left(\frac{z^n}{\psi(z)}\right) \right]  \,dz.\] Hence, \[ m\left(A_{f, \psi, \nu } \cap \{\xx \in \R^n : {S} \leq \|\xx\| \leq {T} \} \right) = 2^n \,n \, m(A) = 2^n \, n \int_{S}^{T} \frac{\psi(z)}{z} \left[ 1 + \sum_{i=1}^{n-2} \frac{1}{i!} \log^i\left(\frac{z^n}{\psi(z)}\right) \right]  \,dz.\] This proves (i).} 

{In the expression $\ds \sum_{i=0}^{n-2} \frac{1}{i!} \log^i\Big(\frac{z^{n}}{\psi(z)}\Big),$ the term that corresponds to $i = n-2$ dominates as $z\to\infty$}; statement (ii)  now follows from (i) and Theorem \ref{mainapproxthmnew}. 

\smallskip

{Finally, arguing as in the proof of Corollary \ref{quadforms}(iii), we see that 
there exist   $S\in \R_{\ge M}$ and $C,C' \in \R_{>0}$ such that for any $T \in [S, \infty)$ we have 
\[
\begin{aligned}
m\left(B_{f,\psi(T),\nu,T}\right) 
&\ge m\left(A_{f, \psi(T), \nu} \cap \{\xx \in \R^n : S \leq \nu(\xx) \leq T \} \right) \\  
&\ge C \int_{S}^{T} \frac{\psi({T})}{{z}}
  \log^{n-2}\Big(\frac{{z}^n}{\psi({T})}\Big)
   \,d{z}  \\  
&= n^{n-2} \, C \psi(T) \int_{S}^{T} 
  \log^{n-2}\Big(\frac{{z}}{\psi({T})^{1/n}}\Big)
   \,\frac{d{z}}z  \\  
& \ge 
 \begin{cases}\ds C' \psi(T)  (\log T - \log S) &\text{ if }n=2,\\ C' \psi(T) \left[  \log^{n-1}\Big(\frac{{T}}{\psi({T})^{1/n}}\Big) -  \log^{n-1}\left(\frac{{S}}{\psi({T})^{1/n}}\right)\right]&\text{ if }n > 2.
\end{cases}
\end{aligned}
\]
Thus, there exists   $C'' \in \R_{>0}$ such that for each sufficiently large $T \in \R_{>0}$, we have 
\[
m\left(B_{f,\psi(T),\nu,T}\right) \ge  
 \begin{cases}\ds C'' \psi(T)  \log T  &\text{ if }n=2,\\ \ds C'' \psi(T)  \log^{n-1}\Big(\frac{{T}}{\psi({T})^{1/n}}\Big) &\text{ if }n>2.
\end{cases}
\] 
Letting $T = t_k = 2^k$ for each sufficiently large $k \in \Z_{\geq 1}$ and applying Theorem \ref{UniformMain} implies (iii). }
\end{proof}

\begin{proof}[Proof of Corollary \ref{khintchine}] 
{Recall that  $\nu$ is the maximum norm on $\R^n,$ and $$f(x_1, \dots , x_n) = \max\left(|x_1|^{a_1}, \dots , |x_\ell|^{a_\ell}\right).$$}
Fix $M \in \R_{> 0}$ such that for each $z \in \R_{\geq M}$ and each $i \in \{1, \dots , \ell \}$ we have $\ds {\psi(z)^{{1}/{a_i}} < M}.$ 
{Take $T\ge S \ge M$, and} set $$  A := A_{f, \psi, \nu } \cap \{\xx \in \R^n : {S} \leq \nu(\xx) \leq {T} \}$$ and 
$\ds A_{\geq 0} := A \cap \left(\R_{\geq 0}\right)^n.$ By symmetry, it is clear that $\ds m(A) = 2^n \, m\left(A_{\geq 0}\right).$ {For any $\xx = (x_1, \dots , x_n) \in A_{\geq 0}$
and  any $i \in \{1, \dots , \ell \}$ one has 
$$|x_i| \leq \psi\big(\nu(\xx)\big)^{{1}/{a_i}} \leq \psi({S})^{{1}/{a_i}}< S \le \nu(\xx),$$
which implies $\nu(\xx) \in \left\lbrace |x_{\ell +1}|, \dots |x_n| \right\rbrace.$ 
Consequently,
$$A_{\geq 0} = \bigcup_{i = 1, \dots , \ell; \ j= \ell+1, \dots , n}B_{ij},$$
where for any $i \in \{1, \dots , \ell \}$ and  $j \in \{\ell+1, \dots , n\}$ we  set
\[ B_{i, j} := A_{\geq 0} \cap \left\lbrace \xx 
 \in \R^n : \max\left(|x_1|^{a_1}, \dots , |x_\ell|^{a_\ell}\right) = |x_i|^{a_i} \ \text{and} \ \nu(\xx) = |x_j| \right\rbrace.\]
In other words,
\[
B_{i, j} = \left\{\xx\in\R^n \left|\begin{aligned} &\quad{S} \leq x_j \leq {T}, \quad 0 \leq x_i \leq \psi\left(x_j\right)^{{1}/{a_i}}, \\  &\ 0 \leq x_p \leq x_i^{{a_i}/{a_p}}\ \forall\,p \in 
\{1, \dots , \ell\} \ssm \{i\}
, \\ &\ 0 \leq x_q \leq x_j \ \forall\, q \in 
\{\ell+1, \dots , n \} \ssm \{j\}
\end{aligned}\right.\right\}.
\] 
Therefore, 
\begin{equation*}
\begin{split}
m(B_{i, j}) &= \int_{S}^{T}  x_j^{n-\ell -1}   \int_0^{ \psi(x_j)^{{1}/{a_i}}} \prod_{p \in 
\{1, \dots , \ell\} \ssm \{i\}
}x_i^{{a_i}/{a_p}} \,dx_i \,dx_j  \\
\left(\text{recall the notation }a = \sum_{i=1}^\ell a_i^{-1}\right)\qquad\quad& = \int_{S}^{T}  x_j^{n-\ell -1}   \int_0^{ \psi(x_j)^{{1}/{a_i}}} x_i^{{a_i}(a-a_i^{-1})} \,dx_i \,dx_j  \\
& = \int_{S}^{T}  x_j^{n-\ell -1} \frac1{a_ia}  \big( \psi(x_j)^{{1}/{a_i}}\big)^{a_ia}  \,dx_j  \\
&= \frac1{a_ia} \int_{S}^{T}  z^{n-\ell -1}    \psi(z)^{a}  \,dz.  \\
\end{split}\end{equation*}
It follows that $$m(A) = 2^n (n-\ell) \sum_{i=1}^\ell \frac{1}{
  a_ia} \int_{S}^{T} \psi(z)^a z^{n-(\ell+1)} \ dz = 2^n (n-\ell) \int_{S}^{T} \psi(z)^a z^{n-(\ell+1)} \ dz,$$ 
which proves (i).} The other statements follow by arguing as in the proofs of {Corollaries \ref{quadforms} and \ref{product}}.  
\end{proof}  

\begin{rmk} \rm \label{kgt}
{More generally, one can take $\ell$ nonincreasing and regular functions $\ds \psi_1,\dots,\psi_\ell : \R_{\geq 0} \to \R_{>0}$ and, using the same argument, show that 
for a.e.\ $g\in G$, the system of inequalities
\[
|(g\vv)_i| \le \psi_i\big(\nu(\vv)\big),\quad i = 1,\dots,\ell
\]
(here, $(g\vv)_i$ denotes the $i^{\rm th}$ component of $g\vv$) has finitely (resp.\ infinitely) many solutions $\vv\in\mathcal{P}$ if and only if the integral  $$ \int_1^{\infty} \left[\prod_{i=1}^\ell\psi_i({z}) \right]{z}^{n - (\ell+1)} \,d{z}$$ is finite (resp.\ infinite). 
In the case $G = \SL$ and $\mathcal{P} = \Z^n_{\ne0}$ this can also be derived from Schmidt's generalization of the Khintchine--Groshev Theorem; see \cite[Theorem~2]{SchmidtKG}.}
\end{rmk}

\section{Concluding remarks}\label{cr}

\subsection{Inhomogeneous approximation}\label{inh} It is a natural problem to extend the methods of this paper to the inhomogeneous setting: that is, to study integer solutions of the system \equ{inhsystem} for fixed $\xi$ and almost every $g$, or vice versa, as is done in  \cite{GKY_a, GKY_b} for quadratic forms. Essentially this amounts to replacing the function $f$ with $f - \xi$, thereby getting rid of the subhomogeneity condition, which is crucial for transforming the results about generic lattices to those for generic forms.

On the other hand, it is not hard to see that the assumption of Theorem \ref{mainapproxthmnew} that $f 
$ be subhomogeneous may be replaced with the  assumption that $f$ be Borel measurable and 
\smallskip

\begin{itemize}
\item for any norms $\eta$ and $\nu$ on $\R^n$ and any $s \in \R_{>0},$ the Lebesgue measure of $A_{f, s\psi, \eta}$ is finite if and only if that of $A_{f, \psi, \nu}$ is finite.
\end{itemize}
\smallskip

 Similarly, one can weaken the subhomogeneity assumption of Theorem \ref{UniformMain}. This makes it possible to consider inhomogeneous problems for some classes of functions $f$, to be addressed in a forthcoming paper.

\subsection{Counting the number of solutions}\label{asymp} A comparison of Theorem \ref{genericcounting}(ii) with Theorem \ref{mainapproxthmnew}(ii) clearly shows a loss of information: a precise counting result for the number of lattice points in an increasing family of subsets of $\R^n$ turns into a rough estimate in the setting of generic subhomogeneous functions, with constants dependent on a compact subset of $G$. It is not clear whether, in the setting of Theorem \ref{mainapproxthmnew}(ii), the limit
$$
\lim_{T \to \infty}\frac{\card\left\lbrace \vv \in {{\mathcal{P}}} : (f \circ  g )(\vv) \les \psi\big( \nu(\vv) \big) \ \mathrm{and} \   \nu(\vv) \leq T \right\rbrace}{m\left(\left\lbrace \ttt \in \R^n : f(\ttt) \les   \psi\big(\nu(\ttt)\big) \ \mathrm{and} \   \nu(\ttt) \leq   T   \right\rbrace\right)} $$
exists for almost every \ $g\in G$. It is also not clear whether any  Khinchine-type results 
can be established without assuming the regularity of $\psi$.
 
\subsection{More metric number theory}\label{mnt} In general, the philosophy of this paper has been rooted  in metric Diophantine approximation, which, in its simplest incarnation, studies the rate of approximation of typical real numbers $\alpha$ by rational numbers $p/q$. 
Our Khintchine-type theorems naturally give rise to many further questions. For  example, in the $m\left(A_{f, \psi, \eta}\right) = \infty$ case of Theorem~\ref{simplemainapproxthm}, one may wish to study the \hd\ of the null set consisting of all $g \in \SL$ for which $f\circ g$ is not $(\psi, \nu)$-approximable. In the case of a critical exponent, this might produce an analogue of {badly approximable} objects that constitute a set that is of full Hausdorff dimension or is winning in the sense of W.\,M.~Schmidt  \cite{Sc1}. 
Such a result is established in \cite{KW} for binary indefinite quadratic forms, that is, for the case $n = d = 2$ of Corollary \ref{spequadforms}. 
Namely, let $\nu$ be an arbitrary norm on $\R^2$. Then it follows from \cite[Theorem 1.2]{KW} that the set 
\eq{ba}{\{g\in \SLtwo: \exists\,\varepsilon > 0 \text{ such that  $f\circ g$ is not 
$(\varepsilon, \nu)$-approximable}\}} has full Hausdorff dimension. (The proof actually yields a stronger  \textsl{hyperplane absolute winning} property introduced in \cite{BFKRW} and known to imply winning, which then implies full \hd.) See also \cite{AGK} for higher-dimensional generalizations.  Note that 
Corollary \ref{spequadforms} implies that  the set \equ{ba} has Haar measure zero.

Alternatively, in the $m\left(A_{f, \psi, \eta}\right) < \infty$ case of Theorem~\ref{simplemainapproxthm}, one can ask for the Hausdorff dimension of the null set consisting of all $g \in \SL$ for which $f\circ g$ is  $(\psi, \nu)$-approximable. It seems natural to seek an analogue of the mass transference principle of Beresnevich\textendash Velani in \cite{BV}; that being said, the $\limsup$ sets in question have  a complicated structure, and the standard techniques do not appear to be applicable.

\ignore{\begin{proof}[Proof of Corollary \ref{distinctpowers}] We first prove (i). For any $\ell \in \{1, \dots ,n \}$ and any real numbers $R_1$ and $R_2$ with $0 < S < T,$ define \[  {}_\ell A_{S}^{T} := \left\lbrace \textbf{x} = (x_1, \dots , x_n) \in \R^n : \prod_{i=1}^n |x_i|^{a_i} < \psi\left(|x_\ell|\right) 
\ \text{and} \ \ S \le  \nu(\xx) = |x_\ell| \le T \right\rbrace. \]

We omit the proof for the cases $n=2$ and $n=3,$ which can be easily verified. We give a proof for any $n \in \Z_{\geq 4}.$    

For each $k \in \{1, \dots , n - 2 \},$ define $I_k : \left(\R_{>0}\right)^{n-(k+1)} \to \R$ by \[ I_k\left(x_1, \dots , x_{n-(k+1)}\right) := \underbrace{\int_{(0, x_1]} \dots \int_{(0, x_1]}}_{k \ \mathrm{times}} \min\left( x_1, \left(\frac{\psi(x_1)}{\prod_{i=1}^{n-1} x_i^{a_i}}\right)^{\frac{1}{a_n}}\right) \,dx_{n-1} \dots dx_{n-k}. \]

For each $k \in \{1, \dots , n - 2 \},$ define $\delta_k : \left(\R_{>0}\right)^{n-(k+1)} \to \{0, 1 \}$ by setting $\ds \delta_k(x_1, \dots , x_{n-(k+1)}) = 1$ if and only if \[ \left(\psi(x_1) x_1^{- \sum_{i=0}^{k-1} a_{n-i}} \prod_{i=1}^{n-(k+1)} x_i^{-a_i}\right)^{\frac{1}{a_{n-k}}} < x_1.\]

For each $k \in \{0, \dots , n \},$ define $\ds D_k := \{n, \dots , n-k\}.$ For each $k \in \{0, \dots , n \}$ and each $i \in \{1, \dots , n\},$ define \[ d_{k, i} := ka_i - \sum_{w \in D_k \ssm \{i\}} a_w. \] Since an empty sum is zero by definition, it follows $\ds d_{0, n} = 0 - 0 = 0.$  

We claim that for each $k \in \{1, \dots , n-2\},$ we have,
\begin{align*}
&I_k\left(x_1, \dots , x_{n-(k+1)}\right) = \min\left(x_1^{k+1}, \ \left(\psi(x_1) x_1^{d_{k, n-k}} \prod_{p=1}^{n-(k+1)} x_p^{-a_p}\right)^\frac{1}{a_{n-k}} \right) \\
&+ \sum_{i = n-(k-1)}^n \frac{a_i^k \ \delta_k(x_1, \dots , x_{n-(k+1)}) }{\prod_{j \in D_k \ssm \{i\}} (a_i-a_j)} \left[ \left( \psi(x_1) x_1^{d_{k, i}} \prod_{p=1}^{n-(k+1)} x_p^{-a_p} \right)^\frac{1}{a_i} - \left(\psi(x_1) x_1^{d_{k, n-k}} \prod_{p=1}^{n-(k+1)} x_p^{-a_p}\right)^\frac{1}{a_{n-k}} \right].  
\end{align*}

Let us first make an observation: For each $k \in \{1, \dots , n - 2 \},$ we claim that \[
1 - \sum_{i = n-(k-1)}^n  \frac{a_i^k }{\prod_{j \in D_k \ssm \{i\}} (a_i-a_j)} = \frac{a_{n-k}^k}{\prod_{j \in D_k \ssm \{n-k\}} (a_{n-k} - a_j)};\] equivalently, we claim that
\[\left(\frac{ a_{n-k}}{a_{n-k} - a_{n-(k+1)}} \right) \left( 1 - \sum_{i = n-(k-1)}^n  \frac{a_i^k }{\prod_{j \in D_k \ssm \{i\}} (a_i-a_j)} \right) = \frac{a_{n-k}^{k+1}}{\prod_{j \in D_{k+1} \ssm \{n-k\}}  (a_{n-k} - a_j)}. \] This follows immediately from a formula for the Vandermonde determinant.

We have \begin{align*}
&I_1(x_1, \dots , x_{n-2}) = \min\left(x_1^{2}, \ \left(\psi(x_1) x_1^{a_{n-1} - a_n} \prod_{p=1}^{n-2} x_p^{-a_p}\right)^\frac{1}{a_{n-1}} \right) \\
&+ \frac{a_n}{a_n - a_{n-1}} \delta_1(x_1, \dots , x_{n-2}) \left[ \left( \psi(x_1) x_1^{a_n - a_{n-1}} \prod_{p=1}^{n-2} x_p^{-a_p} \right)^\frac{1}{a_n} - \left( \psi(x_1) x_1^{a_{n-1} - a_n} \prod_{p=1}^{n-2} x_p^{-a_p} \right)^\frac{1}{a_{n-1}} \right],
\end{align*} which proves the base case $k=1.$

Suppose now, as an induction hypothesis, that for some $k \in \{1, \dots , n-3\},$ we have
\begin{align*}
&I_k\left(x_1, \dots , x_{n-(k+1)}\right) = \min\left(x_1^{k+1}, \ \left(\psi(x_1) x_1^{d_{k, n-k}} \prod_{p=1}^{n-(k+1)} x_p^{-a_p}\right)^\frac{1}{a_{n-k}} \right) \\
&+ \sum_{i = n-(k-1)}^n  \frac{a_i^k \ \delta_k(x_1, \dots , x_{n-(k+1)}) }{\prod_{j \in D_k \ssm \{i\}} (a_i-a_j)} \left[ \left( \psi(x_1) x_1^{d_{k, i}} \prod_{p=1}^{n-(k+1)} x_p^{-a_p} \right)^\frac{1}{a_i} - \left(\psi(x_1) x_1^{d_{k, n-k}} \prod_{p=1}^{n-(k+1)} x_p^{-a_p}\right)^\frac{1}{a_{n-k}} \right].  
\end{align*}

Noting that $\delta_k(x_1, \dots , x_{n-(k+1)}) = 1$ if and only if $\ds \left( \psi(x_1) x_1^{ -\sum_{\ell=0}^k a_{n-\ell} } \prod_{p=1}^{n-(k+2)} x_p^{-a_p} \right)^\frac{1}{a_{n-(k+1)}} < x_{n-(k+1)},$ it follows that
\begin{align*}
&I_{k+1}(x_1, \dots , x_{n-(k+1)}) \\
&= \int_{(0, x_1]} I_k\left(x_1, \dots , x_{n-(k+1)}\right) \,dx_{n-(k+1)} \\
&= \min\left( x_1^{k+2}, \left( \psi(x_1) x_1^{d_{k+1, n-(k+1)}} \prod_{p=1}^{n-(k+2)} x_p^{-a_p} \right)^{1/a_{n-(k+1)}}\right) \\
&+ \delta_{k+1}\left( x_1, \dots , x_{n-(k+2)}\right) \left(\psi(x_1) x_1^{d_{k, n-k}} \prod_{p=1}^{n-(k+2)} x_p^{-a_p}\right)^{1/a_{n-k}} \frac{a_{n-k}}{a_{n-k} - a_{n-(k+1)}} \\
&\left[ x_1^\frac{a_{n-k} - a_{n-(k+1)}}{a_{n-k}} - \left( \psi(x_1) x_1^{ -\sum_{\ell=0}^k a_{n-\ell} } \prod_{p=1}^{n-(k+2)} x_p^{-a_p} \right)^\frac{a_{n-k} - a_{n-(k+1)}}{a_{n-k}a_{n-(k+1)}}  \right]  \\
&+\left\lbrace \sum_{i = n-(k-1)}^n  \frac{a_i^k \ \delta_{k+1}(x_1, \dots , x_{n-(k+2)}) }{\prod_{j \in D_k \ssm \{i\}} (a_i-a_j)}  \left( \psi(x_1) x_1^{d_{k, i}} \prod_{p=1}^{n-(k+2)} x_p^{-a_p} \right)^{1/a_i} \frac{a_{i}}{a_{i} - a_{n-(k+1)}} \right. \\
&\left. \left[ x_1^\frac{a_i - a_{n-(k+1)}}{a_i} - \left( \psi(x_1) x_1^{ -\sum_{\ell=0}^k a_{n-\ell} } \prod_{p=1}^{n-(k+2)} x_p^{-a_p} \right)^\frac{a_i - a_{n-(k+1)}}{a_i a_{n-(k+1)}} \right] \right\rbrace \\
&-\left\lbrace \sum_{i = n-(k-1)}^n  \frac{a_i^k \ \delta_{k+1}(x_1, \dots , x_{n-(k+2)}) }{\prod_{j \in D_k \ssm \{i\}} (a_i-a_j)}  \left(\psi(x_1) x_1^{d_{k, n-k}} \prod_{p=1}^{n-(k+2)} x_p^{-a_p}\right)^{1/a_{n-k}} \frac{a_{n-k}}{a_{n-k} - a_{n-(k+1)}} \right. \\
&\left. \left[ x_1^\frac{a_{n-k} - a_{n-(k+1)}}{a_{n-k}} - \left( \psi(x_1) x_1^{ -\sum_{\ell=0}^k a_{n-\ell} } \prod_{p=1}^{n-(k+2)} x_p^{-a_p} \right)^\frac{a_{n-k} - a_{n-(k+1)}}{a_{n-k} a_{n-(k+1)}} \right]\right\rbrace, \end{align*}
which is equal to 
\begin{align*}
& \min\left( x_1^{k+2}, \left( \psi(x_1) x_1^{d_{k+1, n-(k+1)}} \prod_{p=1}^{n-(k+2)} x_p^{-a_p} \right)^{1/a_{n-(k+1)}}\right) \\
&+\frac{\delta_{k+1}\left( x_1, \dots , x_{n-(k+2)}\right) a_{n-k}}{a_{n-k} - a_{n-(k+1)}} \\ &\left[\left(\psi(x_1) x_1^{d_{k+1, n-k}} \prod_{p=1}^{n-(k+2)} x_p^{-a_p}\right)^{1/a_{n-k}} - \left( \psi(x_1)     x_1^{d_{k+1, n-(k+1)}}\prod_{p=1}^{n-(k+2)} x_p^{-a_p} \right)^{1/a_{n - (k+1)}} \right] \\
&+\left\lbrace \sum_{i = n-(k-1)}^n \frac{a_i^{k+1} \ \delta_{k+1}(x_1, \dots , x_{n-(k+2)}) }{ \prod_{j \in D_{k+1} \ssm \{i\}} (a_i-a_j)} \right. \\
&\left. \left[ \left( \psi(x_1) x_1^{d_{k+1, i}} \prod_{p=1}^{n-(k+2)} x_p^{-a_p} \right)^{1/a_i} - \left( \psi(x_1) x_1^{d_{k+1, n-(k+1)}} \prod_{p=1}^{n-(k+2)} x_p^{-a_p} \right)^{1/a_{n-(k+1)}} \right] \right\rbrace \\
&-\left\lbrace \sum_{i = n-(k-1)}^n  \frac{a_i^k \ \delta_{k+1}(x_1, \dots , x_{n-(k+2)}) }{\prod_{j \in D_k \ssm \{i\}} (a_i-a_j)} \cdot \frac{a_{n-k}}{a_{n-k} - a_{n-(k+1)}} \right. \\
&\left. \left[ \left(\psi(x_1) x_1^{d_{k+1, n-k}} \prod_{p=1}^{n-(k+2)} x_p^{-a_p}\right)^{1/a_{n-k}} - \left( \psi(x_1) x_1^{d_{k+1, n-(k+1)}} \prod_{p=1}^{n-(k+2)} x_p^{-a_p} \right)^{1/a_{n-(k+1)}}  \right] \right\rbrace.  
\end{align*}

Notice that the coefficient in front of of the term \[ 
\left(\psi(x_1) x_1^{d_{k+1, n-k}} \prod_{p=1}^{n-(k+2)} x_p^{-a_p}\right)^{1/a_{n-k}} - \left( \psi(x_1) x_1^{d_{k+1, n-(k+1)}} \prod_{p=1}^{n-(k+2)} x_p^{-a_p} \right)^{1/a_{n-(k+1)}}  
\] is equal to
\begin{align*}
&\frac{\delta_{k+1}\left( x_1, \dots , x_{n-(k+2)}\right) a_{n-k}}{a_{n-k} - a_{n-(k+1)}} - \sum_{i = n-(k-1)}^n  \frac{a_i^k \ \delta_{k+1}(x_1, \dots , x_{n-(k+2)}) }{\prod_{j \in D_k \ssm \{i\}} (a_i-a_j)} \cdot \frac{a_{n-k}}{a_{n-k} - a_{n-(k+1)}} \\
&= \delta_{k+1}\left( x_1, \dots , x_{n-(k+2)}\right) \left(\frac{ a_{n-k}}{a_{n-k} - a_{n-(k+1)}} \right) \left( 1 - \sum_{i = n-(k-1)}^n  \frac{a_i^k }{\prod_{j \in D_k \ssm \{i\}} (a_i-a_j)} \right) \\
&= \delta_{k+1}\left( x_1, \dots , x_{n-(k+2)}\right) \frac{a_{n-k}^{k+1}}{\prod_{j \in D_{k+1} \ssm \{n-k\}}  (a_{n-k} - a_j)}.
\end{align*}

The desired formula thus follows.
For each $k \in \{1, \dots , n-2\},$ we therefore have  
\begin{align*}
&I_k\left(x_1, \dots , x_{n-(k+1)}\right) \\
&= \min\left(x_1^{k+1}, \ \left(\psi(x_1) x_1^{d_{k, n-k}} \prod_{p=1}^{n-(k+1)} x_p^{-a_p}\right)^{1/a_{n-k}} \right) \\
&+ \sum_{i = n-(k-1)}^n \frac{a_i^k \ \delta_k(x_1, \dots , x_{n-(k+1)}) }{\prod_{j \in D_k \ssm \{i\}} (a_i-a_j)} \left[ \left( \psi(x_1) x_1^{d_{k, i}} \prod_{p=1}^{n-(k+1)} x_p^{-a_p} \right)^{1/a_i} - \left(\psi(x_1) x_1^{d_{k, n-k}} \prod_{p=1}^{n-(k+1)} x_p^{-a_p}\right)^{1/a_{n-k}} \right].  
\end{align*}

Hence,
\begin{align*}
I_{n-2}(x_1) 
&= \min\left(x_1^{n-1}, \ \left(\psi(x_1) x_1^{d_{n-2, 2}} x_1^{-a_1} \right)^{1/a_2} \right) \\
&+ \sum_{i = 3}^n \frac{a_i^{n-2} \ \delta_{n-2}(x_1) }{\prod_{j \in D_{n-2} \ssm \{i\}} (a_i-a_j)} \left[ \left( \psi(x_1) x_1^{d_{n-2, i}} x_1^{-a_1} \right)^{1/a_i} - \left(\psi(x_1) x_1^{d_{n-2, 2}} x_1^{-a_1} \right)^{1/a_{2}} \right] \\
&= \min\left(x_1^{n-1}, \ \left(\psi(x_1) x_1^{(n-1)a_2 - \sum_{p=1}^n a_p} \right)^{1/a_2} \right) \\
&+ \sum_{i = 3}^n \frac{a_i^{n-2} \ \delta_{n-2}(x_1) }{\prod_{j \in D_{n-2} \ssm \{i\}} (a_i-a_j)} \left[ \left( \psi(x_1) x_1^{(n-1)a_i - \sum_{p=1}^n a_p} \right)^{1/a_i} - \left(\psi(x_1) x_1^{(n-1)a_2 - \sum_{p=1}^n a_p} \right)^{1/a_{2}} \right].
\end{align*}

Fix any $M \in \R_{>0}$ such that for each $t \in \R_{\geq M},$ we have $\ds \psi(t) \leq t^{\sum_{p=1}^n a_p}.$ Let ${S}$ and ${T}$ be any real numbers with $M \leq {S} \le {T}.$ Then the Lebesgue measure of ${}_1 A_{S}^{T}$ is equal to

\begin{align*}
    &2^n\int_{S}^{T} I_{n-2}(t) \,dt = 2^n\int_{S}^{T}  \left(\psi(t) t^{(n-1)a_2 - \sum_{p=1}^n a_p} \right)^{1/a_2} \,dt \\
    &+ \int_{S}^{T} \sum_{i = 3}^n \frac{a_i^{n-2} }{\prod_{j \in D_{n-2} \ssm \{i\}} (a_i-a_j)} \left[ \left( \psi(t) t^{(n-1)a_i - \sum_{p=1}^n a_p} \right)^{1/a_i} - \left(\psi(t) t^{(n-1)a_2 - \sum_{p=1}^n a_p} \right)^{1/a_{2}} \right] \,dt.
\end{align*}
Since \[ 1 - \sum_{i = 3}^n \frac{a_i^{n-2}}{\prod_{j \in D_{n-2} \ssm \{i\}} (a_i-a_j)} = \frac{a_2^{n-2}}{\prod_{j \in D_{n-2} \ssm \{2\}} (a_2-a_j)}, \] it follows that the Lebesgue measure of ${}_1 A_{S}^{T}$ is equal to \[ 2^n\int_{S}^{T} \sum_{i = 2}^n \frac{a_i^{n-2} \left( \psi(t) t^{(n-1)a_i - \sum_{p=1}^n a_p} \right)^{1/a_i}}{\prod_{j \in D_{n-2} \ssm \{i\}} (a_i-a_j)} \ dt.  \]

For any $\ell \in \{1, \dots , n \},$ let $E_\ell := \{1, \dots , n\} \ssm \{\ell\}.$ Arguing by symmetry, it follows that the Lebesgue measure of ${}_\ell A_{S}^{T}$ is equal to \[ 2^n\int_{S}^{T} \sum_{i \in E_\ell} \frac{a_i^{n-2} \left( \psi(t) t^{(n-1)a_i - \sum_{p=1}^n a_p} \right)^{1/a_i}}{\prod_{j \in E_\ell \ssm \{i\}} (a_i-a_j)} \ dt.  \]

Thus, the Lebesgue measure of $\ds A_{S}^{T} := \bigcup_{\ell=1}^n {}_\ell A_{S}^{T}$ is equal to \[ 2^n \sum_{\ell=1}^n \sum_{i \in E_\ell} \int_{S}^{T} \frac{a_i^{n-2} \left( \psi(t) t^{(n-1)a_i - \sum_{p=1}^n a_p} \right)^{1/a_i}}{\prod_{j \in E_\ell \ssm \{i\}} (a_i-a_j)} \ dt. \] This proves (i). The other statements now follow by arguing as in the proof of Corollary \ref{quadforms}.  
\end{proof}}

\end{document}